\documentclass[11pt,reqno]{amsart}

\usepackage{verbatim}
\usepackage[dvips]{graphicx}
\usepackage{epsfig,subfigure}
\usepackage{psfrag}
\usepackage{amsmath,amssymb,amsfonts,bm,amscd}

\oddsidemargin 0.25truein
\evensidemargin 0.25truein
\textwidth 6.0truein
\topmargin 0.0in
\textheight 8.7in
\setlength{\parskip}{2mm}
\setlength{\parindent}{0mm}

\numberwithin{equation}{section}

\newtheorem{theorem}{Theorem}[section]
\newtheorem{corollary}[theorem]{Corollary}
\newtheorem{lemma}[theorem]{Lemma}
\newtheorem{proposition}[theorem]{Proposition}

\theoremstyle{definition}
\newtheorem{definition}{Definition}[section]
\newtheorem{example}{Example}[section]
\newtheorem{assum}{Assumption}[section]
\theoremstyle{remark}

\newtheorem{problem}{Problem}[section]

\newtheorem{algorithm}{Algorithm}

\newcommand{\scale}{0.8}
\newcommand{\ri}[1]{\stackrel{o}{#1}{}}
\newcommand{\rk}{\text{rank}}

\newcommand{\RR}{{\mathbb R}}
\newcommand{\cA}{{\mathcal A}}
\newcommand{\cB}{{\mathcal B}}
\newcommand{\cF}{{\mathcal F}}
\newcommand{\cG}{{\mathcal G}}
\newcommand{\cH}{{\mathcal H}}
\newcommand{\cN}{{\mathcal N}}
\newcommand{\cO}{{\mathcal O}}
\newcommand{\cP}{{\mathcal P}}
\newcommand{\PP}{{\mathbb P}}
\newcommand{\cQ}{{\mathcal Q}}
\newcommand{\cR}{{\mathcal R}}
\newcommand{\cS}{{\mathcal S}}

\newcommand{\reach}{{\text{Reach}}}
\newcommand{\conv}{{\text{conv}}}

\begin{document}
\title[Triangulation and Control]{Triangulations, Subdivisions, and Covers for
Control of Affine Hypersurface Systems on Polytopes}
\author{Zhiyun Lin}
\email{linz@zju.edu.cn}
\author{Mireille~E.~Broucke}
\address{Department of Electrical and Computer Engineering,
        University of Toronto, Toronto, ON M5S 3G4, Canada}
\email{broucke@control.utoronto.ca}
\date{\today}

\begin{abstract}
This paper studies the problem for an affine hypersurface system to reach a polytopic target set
starting from inside a polytope in the state space.
We present an exhaustive solution which begins with a 
characterization of states which can reach the target by open-loop control and concludes
with a systematic procedure to synthesize a feedback control. Our emphasis is
on methods of subdivision, triangulation, and covers which explicitly account for
the capabilities of the control system. In contrast with previous literature, 
the partition methods are guaranteed to
yield a correct feedback synthesis, assuming the problem is solvable by open-loop control.
\end{abstract}

\maketitle

\section{Introduction}

Problems of reachability for dynamical systems
have been extensively studied in the control literature for a long
time. These problems have attracted renewed interest due to the
emergence of new paradigms for switched and piecewise
linear systems. This paper studies the problem for an affine system to reach a polytopic target set
starting from inside a polytope in the state space.
Promising new ideas have appeared in the last five years in this area, and these ideas have stimulated deeper
study of the many open questions that remain. An important gap in the literature is an exhaustive solution
which covers the following sub-problems: explicit conditions for and an analysis of all states which can reach the target by open-loop control;
a method to approximate the open-loop reachable states when there are control constraints;
a systematic method to form a subdivision of the polytope
into a set of reachable states and a set of failure states;
and finally a systematic procedure to synthesize a feedback control. While
parts of this research program have been studied under various assumptions, no overall end-to-end solution has been presented.
The reason is that the problem is generally extremely difficult and for certain steps, almost
nothing is known about systematic procedures. In order to tackle this problem, rather than scoping back the problem
specification as has typically been done before, we retain the complete problem statement but work with
a specific class of systems: affine hypersurface systems which are $n$-dimensional affine systems
with $(n-1)$ inputs. This class is our focus of study for the following reasons:
(1) The problem of developing a systematic methodology to synthesize controllers for reachability specifications is essentially
open, and beginning with a specific class of models provides much needed insight which can be built upon for generalization.
The outcome of our study is that we are able to provide a complete solution for affine hypersurface systems.
In so doing we introduce new techniques for triangulation and subdivision which can be adapted to the general problem.
(2)
Hypersurface systems include as a special case second-order mechanical systems, which 
are an important benchmark class for new control design methods. More generally,
second-order systems have attracted extensive theoretical study due to their strong geometric properties.
(See \cite{BOSCAIN} for a recent example).
(3)
Hypersurface systems have particularly simple reachable sets. By studying these systems, we separate the
challenges inherent in dealing with complex reachable sets from the other challenges presented by dealing with control
synthesis on a state space which is a polytope. The contributions of the paper are therefore squarely in the area of
triangulations, subdivisions, and covers. What this suggests for future investigations is very important: if the designer is
willing to relax the requirement to find the {\em largest} set of states in the polytope that can reach the target and instead 
he works with
approximations which have reasonable properties, and where importantly, {\em reasonable properties are determined not based on traditional
interpretations of hard and easy reachability computations, but based on how easily one can find triangulations and subdivisions to
solve the synthesis problem}, then one has a hope to develop systematic procedures which are provably successful.

We will now outline the sub-problems which are addressed in this
paper. Some of these sub-problems simply involve packaging known
results in an appropriate way. Other results are novel and have
never been studied before. The latter are especially in the area
of forming triangulations and subdvisions adapted to a given
control synthesis problem. The first sub-problem we address is:
given a polytopic state space $\cP$ and a polytopic target set
$\cF$ in the boundary of $\cP$, characterize explicitly the set of
states in $\cP$ which can reach the target by open-loop control.
This result relies on well-known properties of the reachable sets
of hypersurface systems. While it is a stepping stone to later
results, its importance stems from the fact that knowing
explicitly if a particular reachability problem is solvable by
open-loop control gives a concrete metric against which to test
our results: our synthesis methods should apply to any problem for
which is a solution by open-loop control exists. The next
sub-problem is to develop an algorithm which ``cuts off'' the
failure states in a systematic way, so that the remaining set
$\cP'$ contains the original target set $\cF$ and is a polytope
for which the open-loop reachability problem is solvable. It is
shown that a systematic method to cut off the failure regions can
be done with only two techniques based on the system structure.
This algorithm can be easily adapted to include bounds on the
control input. Once it is known that for a polytopic subset of the
state space all points can reach the target by open-loop control, 
one then addresses questions of control synthesis, and this is the
heart of the paper.

We develop a set of triangulation and subdivision procedures which are organized hierarchically. By a hierarchical
organization we mean the following. At level one of the hierarchy is a subdivision method which solves the given
reachability problem on the polytope. When that subdivision method is
applied, it leads to sub-reachability problems on sub-polytopes which are solved by subdivision methods at level two of the hierarchy, and
so forth. What is important is that there are a finite number of levels and one can prove that the refinement by new subdivisions
terminates.  This contrasts with the view that one simply refines by arbitrary subdivisions selected by a computer program -
a method that has no guarantee to terminate even of the problem is solvable by open-loop control.

One of the challenges in developing these subdivision methods is to determine the simplest set of methods which can completely
solve the problem. We present such a set, though it is by no means unique. We propose a five-level hierarchy:
\begin{enumerate}
\item
For linear and affine systems, problems of reachability are closely tied to existence of equilibria. Therefore, the first level
is a subdivision along the hyperplane $\cO$ of the possible equilibria of the system; namely those points in the state space
for which there exists a control input to make the vector field exactly zero. 
\item
The second level of the hierarchy is a subdivision of a polytope which has two possible target sets: a neighboring polytope
and a target facet. The subdivision is in fact a {\em cover} to be discussed further below, and is with respect to the 
boundary of the two reachability sets.
\item
The third level entails a subdivision with respect to $\cF$ the target set. In particular, it applies to the case
when $\cF$ is not a facet of $\cP$ (motivation for this is discussed in Section~\ref{intro:motivate}). Two techniques are presented: one
is a subdivision and the other is a cover. 
\item
The fourth subdivision is a triangulation within a polytope whose interior does not intersect $\cO$ and it has a single target facet. 
The triangulation is determined using information about the dynamics on that polytope.
\item
The fifth subdivision is a triangulation within a simplex whose interior does not intersect $\cO$ and it has a single target facet. 
\end{enumerate}

In the remainder of the introduction, we review the relevant literature on control synthesis for reachability problems
on simplices and polytopes. In Section~\ref{intro:motivate} we give the context of the problem and from this arises the motivation
and characteristics of our solution. Section~\ref{intro:org} presents notation and the organization of the remainder of the paper.

\subsection{Historical Overview and Related Literature}
\label{intro:literature}
While control problems of reaching target sets in the state space have been studied since the 1960's,
our formulation and approach arise from more recent investigations on
affine and piecewise affine systems defined on simplices and polytopes. The first problem to be studied
of this type was by Luc Habets and Jan van Schuppen \cite{HVS01} in which they formulated the so-called
control-to-facet problem. Further results were given in \cite{HVS04}. Given an affine system,
the problem is to synthesize an affine control to reach an exit facet of a simplex in finite time.
Necessary conditions called {\em invariance conditions} in the form of linear inequalities
defined at the vertices of the simplex were presented
which restrict the closed-loop vector field to point inside appropriately defined tangent cones of the simplex.
A sufficient condition was also presented to ensure that all closed-loop
trajectories exit the simplex. Based on the invariance
conditions, an elegant synthesis method was proposed to obtain an affine control $u = K x + g$ to solve the problem.
In \cite{HVS06,RB06} the control-to-facet problem using affine controls for affine systems defined on simplices
was improved (to allow that trajectories need not exit the simplex at the first time they reach the exit facet),
and more concise necessary and sufficient conditions were obtained.
The new conditions consist of the original
invariance conditions of \cite{HVS01,HVS04} combined with a {\em flow condition} which guarantees
that all trajectories exit the simplex, or equivalently that the closed-loop system has
no equilibria in the simplex. The control-to-facet problem for a polytope as well as hybrid systems was also studied in
\cite{HVS06}. The proposed method is to partition the state space into simplices,
to form a discrete graph capturing the adjacency of simplices, and
then to solve, via a dynamic programming algorithm, a sequence of control-to-facet problems. When the algorithm
terminates successfully it is guaranteed to provide a piecewise affine controller solving the reachability problem.

The problem of reachability with state constraints is
related to the viability/capturability problem in viability
theory, especially characterizing viability kernels with a
target and viable-capture basins for differential inclusions. The
concept of viability kernel with a target by a Lipschitz
set-valued map has been introduce and studied in \cite{QuiVel98}:
This is the subset of initial states in a constrained set from
which at least one solution remains in the constrained set (i.e.,
is viable) forever or reaches (i.e., captures) a target in finite
time before possibly violating the constraints. The set of initial
states satisfying only the latter condition is called the
viable-capture basin of the target. Some abstract properties and
characterizations of viability kernels and viable-capture basins
of a target are further studied and provided in \cite{Aub01}.

A number of methods to construct piecewise affine feedbacks on polytopes
for various control specifications such as stabilization, optimal control, and set invariance have already been developed.
The recent text \cite{BLANCHINI} presents an overview of methods for set invariance, which can be viewed
as the dual to the problem of reachability.
Piecewise affine systems have been the subject of a large number of papers.
A small sampling of recent papers includes \cite{BARIC,BEMPORAD,BLANCHINI2,GEYER,ROLL}.
Several interesting applications of piecewise affine modeling have recently been explored. See
for example \cite{FARCOT}.

\subsection{Context and Motivation}
\label{intro:motivate}

This paper considers the problem of reaching a target $\mathcal{X}_f$ with
state constraint in a set $\mathcal{X}$, denoted as $\mathcal{X}
\overset{\mathcal{X}}{\longrightarrow} \mathcal{X}_f$. The
motivation for this fundamental problem arises from a family of
related reachability problems. Two sample reachability problems
are as follows.
\begin{enumerate}
\item {\em Reach - Avoid problem}. \
Starting at any initial state in a bounded set $Q$, reach a target set
$Q_t$ while avoiding an unsafe region $Q_u$. The problem can be formulated as
$\mathcal{X} \overset{\mathcal{X}}{\longrightarrow} Q_t$, where $\mathcal{X}=Q-Q_u$.
A typical example of the problem is motion planning of multiple
vehicles. \item  {\em Temporal Logic Controller Synthesis}. \
Consider, for example, three areas of interest denoted by $Q_0,
Q_1, Q_2$ such  that $Q_1, Q_2 \subset Q_0$ (see
Figure~\ref{Fig:LTL}) and the temporal logic specification
$\square Q_0 \wedge  \diamondsuit (Q_1 \wedge (Q_1 \mathcal{U}
Q_2) )$, which is interpreted in natural language as: ``Stay
always in $Q_0$ and visit $Q_1$, then stay in $Q_1$ until it
visits $Q_2$ eventually.'' The problem can be thought of as two
reachability problems $Q_0-Q_2 \overset{Q_0-Q_2}{\longrightarrow}
Q_1$ and $Q_1 \overset{Q_1}{\longrightarrow} Q_2$.
\end{enumerate}
\begin{figure}[!t]
\begin{center}
\psfrag{q0}{$Q_0$}
\psfrag{q1}{$Q_1$}
\psfrag{q2}{$Q_2$}
\includegraphics[width= 0.3 \linewidth]{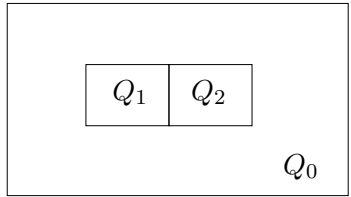}
\caption{The problem of temporal logic controller synthesis.
\label{Fig:LTL}}
\end{center}
\end{figure}
This family of reachability problems motivates the particular features of the problem studied in the paper, in which each $Q_i$ is a polytope
and each target is a polytope in the boundary of $Q_i$. The dynamics in each $Q_i$ may or may not be the same (although we do not study
the hybrid problem here). Sub-reachability problems are sequenced in order to achieve a global specification.
It may happen that a certain reachability problem fails for a particular polytope and one must restrict the polytope by cutting off failure
regions. Such restrictions would propagate to neighboring polytopes and reduce their feasible target sets.
It would be extremely tedious to leave these interventions to the designer, and rather, an automated algorithm
should resolve these failures. This justifies our choice to solve the reachability problem when the target
is not a facet of a polytope.

\subsection{Notation and Organization}
\label{intro:org}

We use the following  notation.
Let $\text{rank}(B)$ and $\text{Im}(B)$ denote the rank and the image
of a matrix $B$. Let $\mathcal{A}$ be a set. $\ri{\mathcal{A}}$,
$\text{conv}(\mathcal{A})$, $\text{vert}(\mathcal{A})$,  and
$\text{aff}(\mathcal{A})$ denote the interior of $\mathcal{A}$,
the convex hull of $\mathcal{A}$, the vertices of $\mathcal{A}$,
and the smallest affine space containing $\mathcal{A}$,
respectively. Let $\mathcal{B}$ be another set. $\mathcal{A}
\setminus \mathcal{B}$ expresses the set difference. Moreover,
$\text{dist}(x, \mathcal{A})$ expresses the distance from a point
$x$ to the set $\mathcal{A}$. Finally, we let
$\mu\left[\mathcal{C}\right] $ be the volume of an $n$-dimensional
set $\mathcal{C}$. If $\mathcal{C}$ is of dimension less than $n$
then $\mu\left[\mathcal{C}\right]=0$.

The paper is organized as follows. In Section~\ref{sec:formulate}
we formulate the problems to be solved. In
Section~\ref{sec:reachpoly} we characterize the set of states
which can reach the target $\cF$ starting in a polytope $\cP$.
Then in Sections~\ref{sec:simplices} and \ref{sec:controlpoly} we
show how to synthesize piecewise affine controls on simplices and
polytopes, respectively, assuming the problem is solvable by open
loop control. In Section~\ref{sec:trianglewrtF} we show how to
subdivide a polytope with respect to a target set which is not a
facet, and in Section~\ref{sec:trianglewrtO} we show how to
subdivide a polytope with respect to the set of possible
equilibria of the system.

\section{Problem Formulation}
\label{sec:formulate}

Let $\cP$ be an $n$-dimensional polytope in $\mathbb{R}^n$ and
consider an affine control system on $\cP$,
\begin{equation}
\label{eq:AffineSystem}
\Sigma: \quad \dot x=Ax+a+Bu =: f(x, u), \quad x \in \cP \,,
\end{equation}
where $A \in \mathbb{R}^{n \times n}$, $B \in \mathbb{R}^{n \times m}$, $a \in \mathbb{R}^n$
and the control $u \in \mathbb{R}^m$ lives in the space of piecewise continuous functions.
Assuming that $\rk(B) =n-1$, we call $\Sigma$ an {\em affine hypersurface system}.
Given a piecewise continuous function $u: t \mapsto u(t)$ and
an initial state $x_0 \in \cP$, let $\phi^u_t(x_0)$
denote the unique solution of $\Sigma$ starting from $x_0$.

In order to precisely formulate our problem, we begin by defining two concepts.
The first is that of reaching a target with constraint in a set, which is
the analogue to the notion of capturability in viability theory
\cite{Aub01}. Second, we define $\Omega$-invariant sets. In viability
theory such a set is called locally invariant relative to $\Omega$ \cite{Aub01}.

\begin{definition} \
Let $\Omega $ and $\Omega_f$ be closed sets satisfying $\Omega \cap \Omega_f \neq \emptyset$.
\begin{enumerate}
\item[(a)] \label{Def:PointBR}
A point  $x \in \Omega$ can {\em
reach $\Omega_f$ with constraint in $\Omega$}, denoted by $x
\overset{\Omega}{\longrightarrow} \Omega_f$, if there exists a
piecewise continuous control $u: t \mapsto u(t)$ and $T \ge 0$ satisfying $\phi_T^u(x)
\in \Omega_f$ and $\phi_t^u(x) \in \Omega$ for all $t \in [0, T]$.
Otherwise, we say $x$ cannot reach $\Omega_f$ with constraint in
$\Omega$, denoted by $x \not \overset{\Omega}{ \longrightarrow }
\Omega_f$.
\item[(b)] \label{Def:SetBR}
A set  $\Omega' \subseteq
\Omega$ can {\em reach $\Omega_f$ with constraint in $\Omega$},
denoted by $\Omega' \overset{\Omega}{\longrightarrow} \Omega_f$,
if $x \overset{\Omega}{\longrightarrow} \Omega_f$ for every $x \in
\Omega'$.
\end{enumerate}
\end{definition}

The {\em maximal reachable set of $\Omega_f$ in $\Omega$} will be denoted by
$\reach( \Omega, \Omega_f)$.

\begin{definition}\label{Def:InvSet}
For a closed set $\Omega$, a set $\mathcal{A} \subseteq \Omega$ is
called {\em $\Omega$-invariant} if for all $x_0 \in \mathcal{A}$
and for all piecewise continuous functions $u: t \mapsto u(t)$,
every trajectory $\phi_t^u(x_0)$ in $\Omega$ on an interval $[0, T]$ with $T < \infty$
or $[0, \infty)$ is in $\mathcal{A}$ on the same time interval.
\end{definition}

The definition means that the
trajectories cannot leave $\mathcal{A}$ before leaving $\Omega$.
The following are elementary properties of $\Omega$-invariant sets which
can be obtained directly from the definition.
\begin{lemma} \
\label{lem:elem}
The union and intersection of two $\Omega$-invariant
sets are also $\Omega$-invariant sets.
The union of all
points $x_0 \in \mathcal{A}$ for which each trajectory  segment
$\phi_t^u(x_0)$ is in $\mathcal{A}$ for the same time it is in
$\Omega$ is the maximal $\Omega$-invariant set in $\mathcal{A}$.
\end{lemma}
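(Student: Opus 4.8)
The statement has three assertions about $\Omega$-invariant sets: closure under union, closure under intersection, and existence of a maximal $\Omega$-invariant set inside a given $\mathcal{A}$. I would prove each directly from Definition~\ref{Def:InvSet}, since the definition is phrased as a universal condition over all trajectory segments, and such conditions behave well under the Boolean operations in question.

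For the union, suppose $\mathcal{A}_1, \mathcal{A}_2 \subseteq \Omega$ are both $\Omega$-invariant, and take $x_0 \in \mathcal{A}_1 \cup \mathcal{A}_2$. Then $x_0$ lies in at least one of the two sets, say $\mathcal{A}_i$; by $\Omega$-invariance of $\mathcal{A}_i$, every trajectory $\phi^u_t(x_0)$ that stays in $\Omega$ on $[0,T]$ (or $[0,\infty)$) stays in $\mathcal{A}_i \subseteq \mathcal{A}_1 \cup \mathcal{A}_2$ on that same interval. So $\mathcal{A}_1 \cup \mathcal{A}_2$ is $\Omega$-invariant. For the intersection, take $x_0 \in \mathcal{A}_1 \cap \mathcal{A}_2$; for any control $u$ and any interval on which the trajectory remains in $\Omega$, applying $\Omega$-invariance of each $\mathcal{A}_i$ separately puts the trajectory segment in $\mathcal{A}_1$ and in $\mathcal{A}_2$, hence in their intersection. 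The only point requiring a word of care is that both arguments use the same time interval (the one on which the trajectory lies in $\Omega$), which is exactly how the definition is set up, so there is no mismatch.

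For the maximal $\Omega$-invariant set in $\mathcal{A}$, I would take $\mathcal{A}^\star$ to be the union of all $x_0 \in \mathcal{A}$ such that for every piecewise continuous $u$, the trajectory segment $\phi^u_t(x_0)$ remains in $\mathcal{A}$ for exactly as long as it remains in $\Omega$ — which is the set named in the statement. First, $\mathcal{A}^\star$ is $\Omega$-invariant: given $x_0 \in \mathcal{A}^\star$, a control $u$, and an interval $[0,T]$ on which $\phi^u_t(x_0) \in \Omega$, I must show $\phi^u_s(x_0) \in \mathcal{A}^\star$ for each $s \in [0,T]$. Set $y = \phi^u_s(x_0)$; by the defining property of $\mathcal{A}^\star$ applied to $x_0$, we already know $y \in \mathcal{A}$. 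To see $y \in \mathcal{A}^\star$, take any control $v$ and concatenate: apply $u$ on $[0,s]$ and then $v$ afterward, obtaining a trajectory through $x_0$; as long as the $v$-portion from $y$ stays in $\Omega$, the whole concatenated trajectory stays in $\Omega$, so by the property of $x_0$ it stays in $\mathcal{A}$, and in particular the $v$-portion from $y$ stays in $\mathcal{A}$. Hence $y \in \mathcal{A}^\star$. Second, maximality: if $\mathcal{B} \subseteq \mathcal{A}$ is any $\Omega$-invariant set and $x_0 \in \mathcal{B}$, then every trajectory from $x_0$ stays in $\mathcal{B} \subseteq \mathcal{A}$ as long as it stays in $\Omega$, so $x_0$ satisfies the defining condition of $\mathcal{A}^\star$; thus $\mathcal{B} \subseteq \mathcal{A}^\star$.

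The main obstacle — and it is a mild one — is the maximality argument, specifically justifying that concatenating two piecewise continuous controls yields a legitimate trajectory to which the definition applies, and checking that the $\Omega$-membership windows line up correctly across the concatenation point. Once the concatenation is handled, everything else is a direct unwinding of Definition~\ref{Def:InvSet}. I would keep the writeup short, stating the union and intersection claims in one or two sentences each and spending the bulk of the proof on the construction and verification of $\mathcal{A}^\star$.
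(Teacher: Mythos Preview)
Your proposal is correct and matches the paper's approach: the paper does not give a proof at all, merely remarking that these properties ``can be obtained directly from the definition,'' which is precisely what you do. Your handling of the maximality claim via concatenation of controls is the natural way to fill in the one nontrivial step the paper leaves implicit.
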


The following result relates the set of states that can reach $\Omega_f$ with constraint in $\Omega$ to
$\Omega$-invariant sets. The proof is in the Appendix.

\begin{proposition}
\label{Thm:NS}
$\Omega \overset{\Omega}{\longrightarrow} \Omega_f $ if and only
if no $\Omega$-invariant set is in $\Omega \setminus \Omega_f$.
\end{proposition}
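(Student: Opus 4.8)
The plan is to prove both implications by contraposition, so the statement to establish becomes: \emph{some nonempty $\Omega$-invariant set lies in $\Omega\setminus\Omega_f$ if and only if $\Omega\not\overset{\Omega}{\longrightarrow}\Omega_f$}. (I would first remark that the phrase ``no $\Omega$-invariant set is in $\Omega\setminus\Omega_f$'' must be read as ``no \emph{nonempty} one,'' since $\emptyset$ is vacuously $\Omega$-invariant and is always contained in $\Omega\setminus\Omega_f$.)

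For the easy direction, suppose $\mathcal{A}\neq\emptyset$ is $\Omega$-invariant with $\mathcal{A}\subseteq\Omega\setminus\Omega_f$. Pick $x_0\in\mathcal{A}$ and suppose, toward a contradiction, that $x_0\overset{\Omega}{\longrightarrow}\Omega_f$, witnessed by a control $u$ and a time $T\ge 0$ with $\phi^u_T(x_0)\in\Omega_f$ and $\phi^u_t(x_0)\in\Omega$ for all $t\in[0,T]$. Since this trajectory stays in $\Omega$ on the finite interval $[0,T]$, Definition~\ref{Def:InvSet} forces $\phi^u_t(x_0)\in\mathcal{A}$ on $[0,T]$; in particular $\phi^u_T(x_0)\in\mathcal{A}\subseteq\Omega\setminus\Omega_f$, contradicting $\phi^u_T(x_0)\in\Omega_f$. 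Hence $x_0\not\overset{\Omega}{\longrightarrow}\Omega_f$, and a fortiori $\Omega\not\overset{\Omega}{\longrightarrow}\Omega_f$.

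For the other direction, suppose $\Omega\not\overset{\Omega}{\longrightarrow}\Omega_f$, so there is an $x_0\in\Omega$ with $x_0\not\overset{\Omega}{\longrightarrow}\Omega_f$. I would then take $\mathcal{A}$ to be the forward reachable set of $x_0$ inside $\Omega$: the set of all $y$ such that $\phi^u_s(x_0)=y$ for some piecewise continuous $u$ and some $s\ge 0$ with $\phi^u_\tau(x_0)\in\Omega$ for all $\tau\in[0,s]$. Then $x_0\in\mathcal{A}$ (take $s=0$) and $\mathcal{A}\subseteq\Omega$ by construction, while $\mathcal{A}\cap\Omega_f=\emptyset$ precisely because $x_0\not\overset{\Omega}{\longrightarrow}\Omega_f$; thus $\emptyset\neq\mathcal{A}\subseteq\Omega\setminus\Omega_f$. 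It remains to check $\mathcal{A}$ is $\Omega$-invariant: given $y\in\mathcal{A}$, reached from $x_0$ by $u$ over $[0,s]$ with the segment in $\Omega$, and any piecewise continuous $v$ whose trajectory $\phi^v_t(y)$ lies in $\Omega$ on an interval $[0,T]$ (finite or $T=\infty$), the concatenation of $u$ on $[0,s)$ with a time-shifted $v$ is again piecewise continuous, and by uniqueness of solutions together with the semigroup identity $\phi^{u\ast v}_{s+t}(x_0)=\phi^v_t(\phi^u_s(x_0))=\phi^v_t(y)$, every $\phi^v_t(y)$, $t\in[0,T]$, is reached from $x_0$ by a trajectory staying in $\Omega$, hence lies in $\mathcal{A}$. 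This is the desired nonempty $\Omega$-invariant subset of $\Omega\setminus\Omega_f$.

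The only genuine subtlety --- the part I expect to be the main obstacle --- is the $\Omega$-invariance verification: one must make precise that concatenating two piecewise continuous controls yields a piecewise continuous control and that the associated trajectory is the concatenation of the two trajectories (the semigroup property of $\phi$ for ODEs with piecewise continuous right-hand side), and one must match the time intervals exactly as Definition~\ref{Def:InvSet} demands, including the case $T=\infty$. Everything else is bookkeeping. As an alternative to building $\mathcal{A}$ by hand, one could invoke Lemma~\ref{lem:elem}: let $\mathcal{M}$ be the maximal $\Omega$-invariant set inside $\Omega\setminus\Omega_f$; then $\Omega\overset{\Omega}{\longrightarrow}\Omega_f$ forces $\mathcal{M}=\emptyset$ (a point of $\mathcal{M}$ would reach $\Omega_f$ while, by invariance, never leaving $\mathcal{M}\subseteq\Omega\setminus\Omega_f$), and conversely $\mathcal{M}=\emptyset$ means, by the pointwise description in Lemma~\ref{lem:elem}, that every $x_0\in\Omega\setminus\Omega_f$ admits a control whose trajectory leaves $\Omega\setminus\Omega_f$ while still in $\Omega$, i.e.\ enters $\Omega_f$, so $\Omega\overset{\Omega}{\longrightarrow}\Omega_f$ (points of $\Omega\cap\Omega_f$ reaching $\Omega_f$ trivially at $T=0$).
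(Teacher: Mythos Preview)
Your proof is correct and follows the same overall contrapositive strategy as the paper: the easy direction is argued identically, and for the harder direction both you and the paper exhibit a nonempty $\Omega$-invariant subset of $\Omega\setminus\Omega_f$. The difference is in which set is chosen. You build the forward $\Omega$-constrained reachable set from a \emph{single} failing point $x_0$ and verify invariance directly via control concatenation and the semigroup property. The paper instead takes $\Omega'':=\{x\in\Omega: x\not\overset{\Omega}{\longrightarrow}\Omega_f\}$, the set of \emph{all} failing points, and argues invariance by the one-line observation that $\Omega''\not\overset{\Omega}{\longrightarrow}\Omega'$ (else concatenation would let a point of $\Omega''$ reach $\Omega_f$). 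Both arguments rest on the same transitivity/concatenation fact; you make it explicit, the paper leaves it tacit. The paper's choice has the side benefit that $\Omega''$ is precisely the complement of $\reach(\Omega,\Omega_f)$, which is what gets used downstream; your construction is more hands-on but equally valid, and your remark about the empty set being vacuously $\Omega$-invariant is a fair clarification the paper glosses over.
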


Now we introduce the assumptions on $\Sigma$.
Let $\mathcal{B}$ denote the $(n-1)$-dimensional  subspace spanned by the column vectors of
$B$ (namely, $\mathcal{B}=\text{Im}(B)$, the image of $B$).
Define
\[
\mathcal{O}:= \{x \in \mathbb{R}^n ~:~ Ax+a \in \mathcal{B} \}.
\]
When the pair $(A,B)$ is controllable it can be shown that $\cO$ is an affine space (see also
\cite{NenFre02}).
Notice that $f(x,u)$ on $\mathcal{O}$ can vanish for an appropriate
choice of $u$, so $\mathcal{O}$ is the set of all possible
equilibrium points of the system.
We make the following standing assumptions until Section~\ref{sec:trianglewrtO}.
\begin{assum} \hspace{1in}
\label{assum1}
\begin{itemize}
\item[(A1)]
$\rk(B) =n-1$.
\item[(A2)]
The pair $(A,B)$ is controllable.
\item[(A3)]
$\ri{\mathcal{P}}~\cap \mathcal{O} =\emptyset$.
\item[(A4)]
The target set $\mathcal{F}$ is an $(n-1)$-dimensional polytope on the boundary of $\cP$.
\end{itemize}
\end{assum}

\begin{problem}
\label{prob1}
We are given $\Sigma$ such that Assumption~\ref{assum1} holds.
\begin{itemize}
\item[(a)]
Find necessary and sufficient conditions such that $\cP \overset{\cP}{\longrightarrow} \cF$.
\item[(b)]
Find $\text{Reach}(\cP,\cF)$, the maximal reachable set of $\cF$ in $\cP$.
\item[(c)]
Find a triangulation $\mathbb{T}$ and a piecewise affine feedback such that
$\text{Reach}(\cP,\cF) \overset{\cP}{\longrightarrow} \cF$.
\end{itemize}
\end{problem}

\section{Reachability on Polytopes \label{Sec:ReachPolytope}}
\label{sec:reachpoly}
In this section, we focus on the open-loop reachability problem
and the first aim is to find necessary and sufficient conditions for
$\cP \overset{\cP}{\longrightarrow} \cF$. The strategy is
to isolate all $\cP$-invariant sets in $\cP \setminus \cF$.
Note that proofs of lemmas for this part can either be found in the Appendix
or we have omitted them in case they were direct logic arguments not adding
insight for the reader.

Denote by $\mathcal{B}_x$ the hyperplane parallel to $\mathcal{B}$ and going
through a point $x$. Let $\beta$ be the unit normal vector to
$\mathcal{B}$ satisfying $\beta^T (Ax+a) \leq 0$ for all $x \in
\mathcal{P}$. Such $\beta$ always exists by our assumption that
$\ri{\mathcal{P}} \cap \mathcal{O} =\emptyset$. Let
$v^-$ be a point in $\arg \min \{\beta^Tx: x \in \mathcal{F}\}$ and
$v^+$ a point in $\arg \max \{\beta^Tx: x \in \mathcal{F}\}$.
Define the closed half-spaces in $\cP$
\begin{eqnarray*}
\mathcal{H}^- &:=& \{x \in \cP ~|~ \beta^T  x \leq \beta^T  v^- \} \,, \\
\mathcal{H}^+ &:=& \{x \in \cP ~|~ \beta^T  x \geq \beta^T  v^+ \} \,.
\end{eqnarray*}
Also, for any $z \in \mathbb{R}^n$, define
$\mathcal{H}^-(z) := \{x \in \cP ~|~ \beta^T  x \leq \beta^T  z \}$ and
$\mathcal{H}^+(z) := \{x \in \cP ~|~ \beta^T  x \geq \beta^T  z \}$.
Finally, we introduce the set
\[
\cP^+ := \arg \max \{\beta^Tx ~|~ x \in \cP \} \,.
\]
Because $\ri{\cP} \cap \cO = \emptyset$, we know that for each initial condition in $\cP$, all
trajectories will only flow in one direction relative to hyperplane $\cB$. In particular,
the $\beta$-component of any trajectory, $\beta \cdot \phi_t^u(x_0)$, is always non-increasing by the convention that
$\beta \cdot (Ax+a) \le 0$, $\forall x \in \cP$. Now the points $v^-$ and $v^+$ mark the points in $\cF$ with
minimum and maximum $\beta$ components. It is clear that if there is any $x_0 \in \cP$ with a $\beta$ component smaller
than $v^-$, then no $\phi_t^u(x_0)$ can reach $\cF$. The following lemma confirms this intuition by showing that
$\cH^-$ and $\ri{\cH}^-$  are $\cP$-invariant sets.

\begin{lemma}
\label{lem1}
Let $z$ be a point in $\cP$.
The sets $\cH^-(z)$ and $\ri{\cH}^-(z)$  are $\cP$-invariant.
\end{lemma}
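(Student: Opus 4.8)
The plan is to argue directly from the definition of $\cP$-invariance: take any initial point $x_0 \in \cH^-(z)$ and any piecewise continuous control $u$, and show that the trajectory $\phi_t^u(x_0)$ cannot leave $\cH^-(z)$ while it remains in $\cP$. The key observation is the scalar inequality already highlighted in the text preceding the lemma: because $\ri{\cP} \cap \cO = \emptyset$, the normal $\beta$ satisfies $\beta^T(Ax+a) \le 0$ for all $x \in \cP$, and since $\beta$ is normal to $\cB = \text{Im}(B)$ we have $\beta^T B = 0$. Hence along any trajectory in $\cP$,
\[
\frac{d}{dt}\left(\beta^T \phi_t^u(x_0)\right) = \beta^T\left(A\phi_t^u(x_0) + a + Bu(t)\right) = \beta^T\left(A\phi_t^u(x_0)+a\right) \le 0 .
\]
So $t \mapsto \beta^T \phi_t^u(x_0)$ is non-increasing as long as the trajectory stays in $\cP$.

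Now suppose $\phi_t^u(x_0)$ is defined and in $\cP$ on $[0,T]$ (with $T$ finite or the interval replaced by $[0,\infty)$). For any $t$ in this interval, integrating the inequality gives $\beta^T \phi_t^u(x_0) \le \beta^T x_0 \le \beta^T z$, where the last step uses $x_0 \in \cH^-(z)$. Combined with $\phi_t^u(x_0) \in \cP$, this is exactly the statement that $\phi_t^u(x_0) \in \cH^-(z)$. This proves $\cH^-(z)$ is $\cP$-invariant.

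For the interior $\ri{\cH}^-(z)$, note that relative to the ambient space $\cP$ the set $\cH^-(z) = \{x \in \cP : \beta^T x \le \beta^T z\}$ has (relative) interior consisting of those $x$ with $\beta^T x < \beta^T z$ together with $x$ in the relative interior of $\cP$ in directions tangent to the facet, but for the invariance argument the only thing that matters is the strict inequality in the $\beta$-direction: if $x_0$ satisfies $\beta^T x_0 < \beta^T z$, then monotonicity gives $\beta^T \phi_t^u(x_0) \le \beta^T x_0 < \beta^T z$, so the trajectory stays in the open half-space, and hence in $\ri{\cH}^-(z)$ for as long as it is in $\cP$ (one should also invoke that the trajectory stays in $\ri{\cP}$ for points starting there, or more carefully just track the $\beta$-coordinate, which suffices to keep the point off the cutting hyperplane). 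The routine point to check is that the relative-interior description of $\cH^-(z)$ inside $\cP$ is governed by this single strict inequality in the neighborhood of the cut; I do not expect any real obstacle here. The only mild subtlety — and the step I would be most careful about — is the edge case where $\cH^-(z)$ is lower-dimensional (e.g. $z \in \cP^-$), in which case $\ri{\cH}^-(z)$ should be interpreted as its relative interior and the statement is either vacuous or reduces to the same monotonicity argument on the face; this can be dispatched in a sentence.
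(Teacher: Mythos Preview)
Your argument is correct and in fact more elementary than the paper's. Both proofs reduce to the scalar variable $\xi(t)=\beta^T\phi_t^u(x_0)$ and use $\beta^T B=0$ to eliminate the control, arriving at $\dot\xi(t)=\beta^T(Ax(t)+a)$. You then finish in one line: since $\beta^T(Ax+a)\le 0$ on all of $\cP$, the function $\xi$ is non-increasing on any interval where the trajectory stays in $\cP$, and the conclusion is immediate by integration. The paper instead introduces the marginal function $g(y)=\max\{\beta^T(Ax+a):x\in\cP,\ \beta^T x=y\}$, proves a separate lemma that $g$ is locally Lipschitz, bounds $\dot\xi\le g(\xi)$, and invokes the Comparison Principle against the solution of $\dot y=g(y)$. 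Your route avoids this auxiliary ODE and the Lipschitz lemma entirely; for the bare invariance statement nothing is lost, since all you need is $\dot\xi\le 0$, not a quantitative comparison. The paper's machinery would only pay off if one later needed a sharper rate estimate on $\xi$, which the present lemma does not require.

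One minor point: your discussion of $\ri{\cH}^-(z)$ meanders a bit. In the paper's usage this set is simply $\{x\in\cP:\beta^T x<\beta^T z\}$, so the strict-inequality version of your monotonicity argument ($\beta^T\phi_t^u(x_0)\le\beta^T x_0<\beta^T z$) is all that is needed; you can drop the remarks about relative interiors and lower-dimensional edge cases.
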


\begin{figure}[t!]
\begin{center}
\psfrag{v1}{$v^-$}
\psfrag{v2}{$v^+$}
\psfrag{p}{$\mathcal{P}$}
\psfrag{f}{$\mathcal{F}$}
\psfrag{o}{$\mathcal{O}$}
\psfrag{h1}{$\cH^-$}
\psfrag{h2}{$\cH^+$}
\psfrag{b}{$\mathcal{B}$}\psfrag{beta}{$\beta$}\psfrag{pp}{$\cP^+$}
\includegraphics[width= 0.4\linewidth]{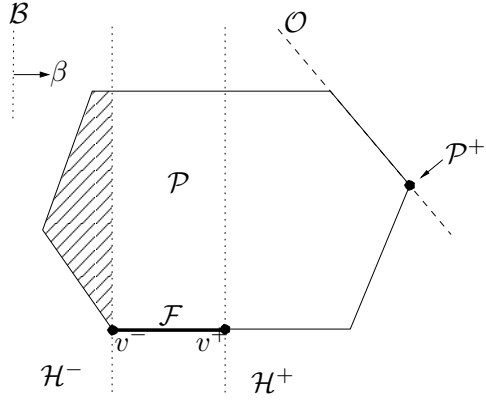}
\caption{Illustration for Theorem~\ref{Thm:NoIntersection}
\label{Fig:NS}}
\end{center}
\end{figure}

The previous discussion suggests that a first necessary condition for $\cP \overset{\cP}{\longrightarrow} \cF$
is that ${\cH^- \setminus \cF} \subset {\cP \setminus \cF}$ is empty. This is not quite right. There can be points
$x \in \cB_{v^-} \cap \cO$ which can still reach $\cF$. This is the content of the next lemma.

\begin{lemma}
\label{lem2}
Let $y, z$ be distinct points in $\mathcal{P}$ and let $l$ be the line segment joining them.
If $z, y \in \cO$ and $y \in \cB_z$, then $z \overset{l}{\longrightarrow} y$.
\end{lemma}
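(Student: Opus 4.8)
The plan is to construct an explicit open-loop control steering $z$ to $y$ while keeping the trajectory on the line segment $l$. The key observation is that $l$ is parallel to $\cB$: since $y \in \cB_z$ means $y - z \in \cB = \mathrm{Im}(B)$, the direction vector $w := y - z$ lies in $\cB$. Because $z, y \in \cO$, we have $Az + a \in \cB$ and $Ay + a \in \cB$, hence also $A(z + s w) + a = (Az+a) + s(Aw) \in \cB$ for all $s$, using that $Aw = (Ay+a)-(Az+a) \in \cB$; thus every point of $l$ lies in $\cO$. On $\cO$ the drift $Ax + a$ lies in $\mathrm{Im}(B)$, so there is a feedback (or open-loop) choice of $u$ making $\dot x$ equal to any prescribed vector in $\cB$ — in particular equal to $w$, or any positive multiple of $w$. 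Concretely, pick any $u_0$ with $Bu_0 = w$; along the candidate trajectory $x(t) = z + t\, w$ for $t \in [0,1]$ we need $\dot x(t) = w = A x(t) + a + B u(t)$, i.e. $B u(t) = w - A(z + t w) - a$. The right-hand side lies in $\cB$ for every $t$ by the computation above, so such a (piecewise continuous, in fact continuous) $u(t)$ exists; then $\phi_1^u(z) = y$ and $\phi_t^u(z) = z + t w \in l$ for all $t \in [0,1]$, which is exactly $z \overset{l}{\longrightarrow} y$.

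The first step I would carry out is the affine-geometry computation showing $l \subset \cO$ and $w \in \cB$; this is short linear algebra from the definitions of $\cO$ and $\cB_z$. The second step is to write down the required control $u(t)$ by solving $B u(t) = w - A x(t) - a$ pointwise in $t$; here I would note that $B$ need not be invertible (it has rank $n-1$), but solvability of $B u = v$ for a given $v$ is equivalent to $v \in \mathrm{Im}(B) = \cB$, which we have just verified, and one can choose $u(t)$ to depend continuously (even affinely) on $t$, so it is certainly piecewise continuous as required by the definition of reachability. The third step is simply to verify that $x(t) = z + tw$ solves $\dot x = Ax + a + Bu(t)$ with this $u$, stays in $l$, and hits $y$ at $t = 1$.

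I do not expect a serious obstacle here: the lemma is essentially the statement that $\cO$ is a "free" set on which one can move in any $\cB$-direction, and the proof is a direct construction. The only point requiring a little care is making sure the control is admissible (piecewise continuous) and that one invokes controllability/the rank assumption only where needed — in fact the construction uses just $\mathrm{rank}(B) = n-1$ together with the fact, already noted in the excerpt, that $(A,B)$ controllable makes $\cO$ an affine space, though even without that one can argue directly from $z, y \in \cO$ and $y - z \in \cB$ that the whole segment lies in $\cO$. One should also remark that the constraint set in the conclusion is $l$ itself, not $\cP$, so no invariance argument relative to $\cP$ is needed; the trajectory is confined to $l$ by construction.
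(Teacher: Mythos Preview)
Your proof is correct and follows essentially the same approach as the paper: both show that $y-z\in\cB$ and that the drift $Ax+a$ lies in $\cB$ along $l$ (the paper via convexity of $\beta^T(Ax+a)=0$, you via the equivalent direct computation $Aw=(Ay+a)-(Az+a)\in\cB$), then solve $Bu=\lambda(y-z)-(Ax+a)$ pointwise to produce a closed-loop velocity along $y-z$. The paper's write-up is terser and cross-references Lemma~\ref{Lem:Line}, but the content is the same.
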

In light of this, we define the following set:
\[
\cB^- :=
\left\{\begin{array}{ll}
\cB_{v^-} \cap \cO
& \qquad \qquad \qquad \quad \text{if $\cB_{v^-} \cap \cO \cap \cF \neq \emptyset$} \\
\emptyset
& \qquad \qquad \qquad \quad \text{otherwise.}
\end{array}
\right.
\]
Lemma~\ref{lem2} says that points in $\cB^-$ can reach $\cF$, so these points should not be included in a candidate failure
set. Thus we arrive at our first necessary condition for $\cP \overset{\cP}{\longrightarrow} \cF$:
$\cH^- \setminus (\cF \cup \cB^-) = \emptyset$. Figure~\ref{Fig:NS} shows a shaded region corresponding to failure
of this condition.

Another failure leading to a second necessary condition is as follows. If $x_0 \in \cO$, then it
is on the boundary of $\cP$ and the instantaneous motion from
this point is only along $\cB$. If $\cB$ does not intersect the tangent cone of $\cP$ at $x_0$, then the only way to avoid
a trajectory leaving $\cP$ immediately is to place an equilibrium point at $x_0$.
The following lemma captures this situation by showing how $\cP$-invariant sets arise along $\cO$.
See the right side of Figure~\ref{Fig:NS}.

\begin{lemma}
\label{Lem:FaceInvariance}
Let $z$ be a point in $\mathcal{P}$.
If $\mathcal{B}_z \cap \mathcal{P} \subset \mathcal{O}$, then
$\mathcal{B}_z \cap \mathcal{P} $ and $\mathcal{P} \setminus \mathcal{B}_z$ are
$\mathcal{P}$-invariant.
\end{lemma}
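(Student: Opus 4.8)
The plan is to collapse both statements into one scalar differential inequality along trajectories. Write $p(x):=\beta^{T}(Ax+a)$ and $q(x):=\beta^{T}(x-z)$, both affine on $\RR^{n}$. By (A1), $\cB$ is a hyperplane through the origin with normal $\beta$, so $\cO=\{x:p(x)=0\}$, $\beta^{T}B=0$, and by the choice of $\beta$ we have $p(x)\le 0$ on $\cP$; also $\mathcal{B}_{z}\cap\cP=\{x\in\cP:q(x)=0\}$. Fix any piecewise continuous $u$ and any $x_{0}$ and set $\phi_{t}:=\phi_{t}^{u}(x_{0})$; on any interval on which $\phi_{t}\in\cP$, the scalar function $q(t):=q(\phi_{t})$ is absolutely continuous with $\dot q(t)=\beta^{T}\dot\phi_{t}=\beta^{T}(A\phi_{t}+a)=p(\phi_{t})$, so in particular $\dot q(t)\le 0$ there.

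The crux is the estimate, which is exactly where the hypothesis $\mathcal{B}_{z}\cap\cP\subset\cO$ enters: there is a constant $C>0$, depending only on $\Sigma,\cP,z$, such that $|p(x)|\le C\,|q(x)|$ for every $x\in\cP$. Indeed, the hypothesis says precisely that $q(x)=0$ with $x\in\cP$ forces $p(x)=0$. Since $p,q$ are affine and $\cP$ is a compact polytope, $\Pi:=\{(p(x),q(x)):x\in\cP\}$ is a polytope in $\RR^{2}$ meeting the line $\{q=0\}$, if at all, only at the origin; examining the tangent cone of $\Pi$ at the origin (a pointed cone lying in one closed half-plane bounded by $\{q=0\}$ and meeting that line only at the origin, hence contained in some $\{|P|\le C|Q|\}$) together with the compactness of $\{x\in\cP:|q(x)|\ge\varepsilon\}$ gives $\Pi\subset\{(P,Q):|P|\le C|Q|\}$, which is the claim. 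I expect this estimate to be the only non-routine point. (Alternatively one can split into the case $\mathcal{B}_{z}=\cO$, where $A^{T}\beta$ is parallel to $\beta$ and $\dot q=\lambda q$ is a linear ODE, and the case where $\mathcal{B}_{z}$ is a supporting hyperplane of $\cP$, since $\mathcal{B}_{z}\cap\ri{\cP}\neq\emptyset$ would make $\mathcal{B}_{z}\cap\cP$ full-dimensional and hence equal to $\cO$.)

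Granting the estimate, along any trajectory remaining in $\cP$ we have $|\dot q(t)|=|p(\phi_{t})|\le C|q(t)|$. A Gronwall argument then gives: if $q(0)=0$ then $q\equiv 0$; and if $q(0)\neq 0$ then $|q(t)|\ge|q(0)|e^{-Ct}>0$ for all finite $t$, so $q$ never vanishes and, being continuous, keeps its sign. To finish, take $x_{0}\in\mathcal{B}_{z}\cap\cP$: then $q(0)=0$, so $q(\phi_{t})=0$ as long as $\phi_{t}\in\cP$, i.e. $\phi_{t}\in\mathcal{B}_{z}\cap\cP$, which shows $\mathcal{B}_{z}\cap\cP$ is $\cP$-invariant in the sense of Definition~\ref{Def:InvSet}. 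Take $x_{0}\in\cP\setminus\mathcal{B}_{z}$: then $q(0)\neq 0$, so $q(\phi_{t})\neq 0$ as long as $\phi_{t}\in\cP$, i.e. $\phi_{t}\in\cP\setminus\mathcal{B}_{z}$, which shows $\cP\setminus\mathcal{B}_{z}$ is $\cP$-invariant. The reasoning is identical whether the trajectory segment is defined on $[0,T]$ with $T<\infty$ or on $[0,\infty)$.
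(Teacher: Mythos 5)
Your proposal is correct, but it proves the lemma by a genuinely different route than the paper. The paper treats this as a corollary of its half-space invariance lemma (Lemma~\ref{lem1}): it invokes that lemma once with $\beta$ and once with $-\beta$ (the latter being legitimate because $\beta^T(Ax+a)=0$ on $\cB_z\cap\cP$ supplies the sign condition needed at the relevant level set), obtaining that $\cH^-(z)$, $\ri{\cH}^-(z)$, $\cH^+(z)$, $\ri{\cH}^+(z)$ are all $\cP$-invariant, and then applies Lemma~\ref{lem:elem}: the intersection $\cH^-(z)\cap\cH^+(z)\cap\cP=\cB_z\cap\cP$ and the union $\ri{\cH}^-(z)\cup\ri{\cH}^+(z)$ (intersected with $\cP$) $=\cP\setminus\cB_z$ are invariant. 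The machinery behind Lemma~\ref{lem1} is a Lipschitz marginal function plus the comparison principle. You instead prove a single pointwise estimate $|\beta^T(Ax+a)|\le C\,|\beta^T(x-z)|$ on $\cP$, extracted from the hypothesis $\cB_z\cap\cP\subset\cO$ via the two-dimensional affine image of $\cP$, and then run a two-sided Gronwall argument on $q(t)=\beta^T(\phi^u_t(x_0)-z)$, which yields both conclusions at once ($q(0)=0\Rightarrow q\equiv0$; $q(0)\neq0\Rightarrow q$ never vanishes). Your route is self-contained: it does not need the marginal-function Lipschitz lemma, the reuse of Lemma~\ref{lem1} with the reversed normal, or the lattice properties of invariant sets; what it buys is an explicit linear bound playing the role of the paper's Lipschitz comparison function, and it is the same comparison-principle idea in a more elementary guise. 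Two small remarks on your key estimate: the tangent cone of the image polytope need not be pointed nor lie in one closed half-plane in general (it can be a full line transverse to $\{Q=0\}$), but this does not matter --- since the image is a polytope, its cone at the origin is finitely generated and closed, meets $\{Q=0\}$ only at the origin, and hence $|Q|$ is bounded below on its intersection with the unit circle, which already gives the containment in $\{|P|\le C|Q|\}$; and, incidentally, under (A2)--(A3) the hypothesis forces $\cB_z$ to be a supporting hyperplane of $\cP$ (because $\cO$ is a hyperplane not parallel to $\cB$), so the half-plane picture you describe does in fact hold, though your proof does not need it.
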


A more subtle argument is needed to show that our proposed conditions are also sufficient to solve the reachability problem.
Sufficiency relies are two properties: the system is controllable, so it has sufficient maneuverability on $\cO$, and
the following lemma which provides the required maneuverability off of $\cO$.
\begin{lemma}
\label{Lem:Line}
Let $y \neq z \in \cP$ and let $l$ be the line segment joining them.
If $z, y \not\in \mathcal{O}$ and $y \in \ri{\cH}^-(z)$, then
$z \overset{l}{\longrightarrow} y$.
\end{lemma}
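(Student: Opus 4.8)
The plan is to construct an explicit open-loop control that drives $z$ to $y$ along the segment $l$, exploiting that the segment's direction $y-z$ has a strictly negative $\beta$-component (since $y \in \ri{\cH}^-(z)$ means $\beta^T y < \beta^T z$) and that neither endpoint — hence no point of $l$ other than possibly in a degenerate case — lies on $\cO$. Write $d := y - z$. First I would verify that $\beta^T d < 0$: this is immediate from $y \in \ri{\cH}^-(z)$, because $\ri{\cH}^-(z) = \{x \in \cP : \beta^T x < \beta^T z\}$ (the relative interior strips the boundary hyperplane $\cB_z$). The idea is then to parametrize $l$ by $x(s) = z + s d$ for $s \in [0,1]$ and ask for a control making the vector field at $x(s)$ proportional to $d$, i.e.\ $f(x(s), u) = \lambda(s)\, d$ for some scalar $\lambda(s)$.

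Second, I would solve this pointwise algebraic condition for $u$. We need $A x(s) + a + B u = \lambda(s) d$, i.e.\ $B u = \lambda(s) d - (A x(s) + a)$. Since $\rk(B) = n-1$ and $\cB = \mathrm{Im}(B)$ is the hyperplane with normal $\beta$, this linear system in $u$ is solvable precisely when $\lambda(s) d - (Ax(s)+a) \in \cB$, i.e.\ when $\beta^T(\lambda(s) d - (Ax(s)+a)) = 0$, which gives
\[
\lambda(s) = \frac{\beta^T(A x(s) + a)}{\beta^T d}.
\]
This is well-defined because $\beta^T d \neq 0$. Crucially, I must check $\lambda(s) < 0$ for all $s \in (0,1)$ — or at least $\lambda(s) \le 0$ with the reparametrization argument handling the boundary — so that the trajectory actually moves from $z$ toward $y$ (in the direction of decreasing $s$-to-$1$, i.e.\ increasing $s$) rather than backward. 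We have $\beta^T(Ax(s)+a) \le 0$ for all $x(s) \in \cP$ by the choice of $\beta$, and $\beta^T d < 0$, so $\lambda(s) \ge 0$ — wait, that is the wrong sign, so I would instead set $f(x(s),u) = -\lambda(s) d$ style bookkeeping, or simply track signs carefully: the point is that $\lambda(s)$ has a definite sign on the interior of $l$, and $\lambda(s) = 0$ exactly when $x(s) \in \cO$; since $z, y \notin \cO$ and (one argues) $l \cap \cO$ is at most a single point, $\lambda$ is nonzero on a dense subset and the resulting ODE $\dot x = \lambda(s(x)) d$ along $l$ has a solution reaching $y$ in finite time, possibly after a reparametrization to absorb a simple zero of $\lambda$. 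The corresponding $u(s)$, obtained as any (e.g.\ least-norm) solution of the solvable linear system, is continuous in $s$ except possibly at that one point, hence piecewise continuous in $t$, which is all the definition of $z \overset{l}{\longrightarrow} y$ requires.

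Third, I would assemble these pieces: define $u(t)$ via the solution $s(t)$ of the scalar ODE $\dot s = \lambda(z + s d)/\|d\|^2$-type equation (or $\dot s = \mathrm{sgn}(\cdot)$ after reparametrization if $\lambda$ vanishes once), conclude $\phi^u_t(z) = z + s(t) d \in l$ for all $t$, that $s$ reaches $1$ at some finite $T$, and therefore $\phi^u_T(z) = y$ with the whole trajectory in $l$. The main obstacle is the possibility that $l$ meets $\cO$ at an interior point, where $\lambda$ vanishes and the naive ODE stalls; the remedy is either to note that a simple zero is integrable after the standard time-reparametrization (the trajectory passes through in finite time provided $\lambda$ does not vanish to infinite order, which it cannot since $\lambda$ is affine in $s$ and not identically zero — if it were identically zero then $l \subset \cO$, contradicting $z \notin \cO$), or to detour infinitesimally around that point using controllability on $\cO$ in the spirit of Lemma~\ref{lem2}. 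Establishing that this single-zero case is handled cleanly — i.e.\ that finite reach time survives — is the delicate point; everything else is the routine linear-algebra computation of $u$ from the solvability condition on $\mathrm{Im}(B)$.
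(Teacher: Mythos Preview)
Your setup matches the paper's approach: parametrize $l$ by $x = z + s(y-z)$ and solve $Ax+a+Bu = \lambda(y-z)$ using that $[-B \ \ (y-z)]$ is full rank (since $\beta^T(y-z)<0$). But you miss the key simplification and then propose an incorrect fix.

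The point you miss is that $l$ cannot meet $\cO$ at all. The map $x \mapsto \beta^T(Ax+a)$ is affine and nonpositive on $\cP$, with $x \in \cO$ iff it vanishes. Since $z,y \notin \cO$, it is strictly negative at both endpoints, hence (being affine) strictly negative on all of $l$. Thus $\lambda(s) = \beta^T(Ax(s)+a)/\beta^T(y-z)$ is a quotient of two strictly negative numbers, so $\lambda(s)>0$ on all of $[0,1]$ --- this is the correct sign, not the wrong one, since $\dot x = \lambda(y-z)$ with $\lambda>0$ moves from $z$ toward $y$. Compactness then gives $\lambda$ bounded below, and finite-time arrival follows immediately. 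This is exactly the paper's argument.

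Your ``main obstacle'' paragraph addresses a nonexistent case, and the remedy you propose for it is wrong: if $\lambda$ had a simple interior zero at $s_0$, then $s_0$ would be an equilibrium of the scalar ODE $\dot s = \lambda(s)$, and the solution from $s=0$ would approach $s_0$ asymptotically without ever crossing it. A simple zero of the velocity is \emph{not} passable in finite time by reparametrization; reparametrization changes speed, not the topology of equilibria. Fortunately, as above, this scenario cannot occur under the hypotheses.
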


\begin{theorem}
\label{Thm:NoIntersection}
$\mathcal{P} \overset{\mathcal{P}}{\longrightarrow} \mathcal{F}$ ~~if and only if~~
(a) \ $\cH^- \setminus (\cF \cup \cB^-) = \emptyset$, and
(b) \ $\cP^+ \not\subset \cO \cap \ri{\cH}^+$.
\end{theorem}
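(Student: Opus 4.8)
The plan is to prove the theorem by combining Proposition~\ref{Thm:NS} with a careful identification of the maximal $\cP$-invariant set in $\cP \setminus \cF$. By Proposition~\ref{Thm:NS}, $\cP \overset{\cP}{\longrightarrow} \cF$ holds if and only if $\cP \setminus \cF$ contains no nonempty $\cP$-invariant set. So I would first prove necessity by exhibiting, in each of the two failure cases, an explicit nonempty $\cP$-invariant set contained in $\cP \setminus \cF$; then prove sufficiency by showing that if both (a) and (b) hold, every candidate $\cP$-invariant set in $\cP \setminus \cF$ must be empty.

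\emph{Necessity.} Suppose (a) fails, i.e.\ there is a point $x \in \cH^- \setminus (\cF \cup \cB^-)$. If $x \notin \cB_{v^-}$, then $\beta^T x < \beta^T v^-$, so the set $\cH^-(x)$ is nonempty, is $\cP$-invariant by Lemma~\ref{lem1}, and is disjoint from $\cF$ (since every point of $\cF$ has $\beta$-component at least $\beta^T v^-$); hence it lies in $\cP \setminus \cF$. If instead $x \in \cB_{v^-}$ but $x \notin \cF \cup \cB^-$, then either $x \notin \cO$, in which case $\cH^-(x) = \ri{\cH}^-(v^-) \cup (\text{boundary material in } \cB_{v^-})$ — more precisely one uses Lemma~\ref{lem1} on $\ri{\cH}^-(v^-)$, which contains $x$ when $x$ is off $\cO$ and off $\cF$, giving a nonempty $\cP$-invariant subset of $\cP \setminus \cF$; or $x \in \cB_{v^-} \cap \cO$ but $\cB_{v^-} \cap \cO \cap \cF = \emptyset$ (the defining condition for $\cB^- = \emptyset$), and here one checks that the connected component of $\cB_{v^-} \cap \cP \cap \cO$ through $x$, together with $\cH^-(v^-)$, is $\cP$-invariant and misses $\cF$. (If $\cB_{v^-} \cap \cP \subset \cO$, Lemma~\ref{Lem:FaceInvariance} directly supplies the invariant set.) The details here require some case bookkeeping on whether $x$ is interior or boundary, but each case is a short argument using Lemmas~\ref{lem1}--\ref{Lem:FaceInvariance}. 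Next suppose (b) fails, i.e.\ $\cP^+ \subset \cO \cap \ri{\cH}^+$. Then $\cP^+$ is contained in a single hyperplane $\cB_z$ (the one through any point of $\cP^+$, since $\cP^+$ lies in the supporting hyperplane $\{\beta^T x = \max\}$, which is parallel to $\cB$), and $\cB_z \cap \cP = \cP^+ \subset \cO$. By Lemma~\ref{Lem:FaceInvariance}, $\cB_z \cap \cP = \cP^+$ is $\cP$-invariant, and since $\cP^+ \subset \ri{\cH}^+$ and $v^+ \in \cF$ has $\beta^T v^+ < \max$, we get $\cP^+ \cap \cF = \emptyset$, so $\cP^+ \subset \cP \setminus \cF$ is the desired nonempty invariant set.

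\emph{Sufficiency.} Assume (a) and (b) hold; I want to show there is no nonempty $\cP$-invariant set $\mathcal{A} \subseteq \cP \setminus \cF$. Take any $x_0 \in \cP \setminus \cF$ and construct a trajectory from $x_0$ that leaves $\cP \setminus \cF$ while staying in $\cP$ until it does — this shows $x_0 \overset{\cP}{\longrightarrow} \cF \cup (\cP \setminus (\text{closure of }\cP))$, and more carefully that $x_0$ can reach $\cF$, which by Proposition~\ref{Thm:NS} (applied to singletons / via the maximality statement in Lemma~\ref{lem:elem}) rules out any invariant set avoiding $\cF$. The trajectory is built in two regimes. If $x_0 \notin \cO$: since $\beta^T x_0 > \beta^T v^-$ is impossible to violate downward past $v^-$ only if $x_0 \in \cH^-$, but condition (a) forces $x_0 \notin \cH^- \setminus (\cF \cup \cB^-)$, so either $x_0 \in \cB^-$ (handled by Lemma~\ref{lem2}, steering within $\cB_{v^-} \cap \cO$ to a point of $\cF$) or $\beta^T x_0 > \beta^T v^-$; in the latter case pick a target point $y \in \ri{\cF}$ (or a suitable point of $\cF$) with $y \in \ri{\cH}^-(x_0)$ and apply Lemma~\ref{Lem:Line} to steer along the segment $[x_0, y]$ into $\cF$ while staying in $\cP$. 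The point $y$ exists because $\cF$ is $(n-1)$-dimensional and $\beta^T y$ can be chosen strictly between $\beta^T v^-$ and $\beta^T x_0$; one must make sure the segment stays in $\cP$, which is where convexity of $\cP$ and the choice of $y$ near $\cF$ matter. If $x_0 \in \cO$: use controllability (Assumption A2) to move within $\cO$ — an affine space by the remark after the definition of $\cO$ — off of the bad configuration; specifically, condition (b) guarantees $\cP^+$ is not a "trap" hyperplane-slice inside $\cO \cap \ri{\cH}^+$, so $x_0$ is not caught in a set forced to be invariant by Lemma~\ref{Lem:FaceInvariance}, and controllability lets us first lower $\beta^T x_0$ slightly (leaving $\cO$) and then apply the off-$\cO$ argument above.

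\emph{Main obstacle.} I expect the hard part to be the sufficiency direction, specifically the bookkeeping at the "boundary" configurations: points $x_0 \in \cO$ (on $\p\cP$) for which the only instantaneous motion is along $\cB$, and points in $\cB_{v^-}$. One must argue that controllability plus conditions (a), (b) genuinely allow escaping every such configuration — i.e.\ that the only obstructions are exactly the ones catalogued by Lemmas~\ref{lem1}, \ref{Lem:FaceInvariance}, and that these are excluded by (a) and (b). Making the "steer to a good target point $y \in \cF$ with the segment remaining in $\cP$" step precise — choosing $y$ so that $[x_0,y] \subset \cP$, $y \in \cF$, and $y \in \ri{\cH}^-(x_0)$ simultaneously — will also need care, using that $\cF$ is a full-dimensional facet and that $v^-$ is the $\beta$-minimizer on $\cF$. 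By contrast, necessity is essentially a direct application of the three invariance lemmas once the geometry of $\cP^+$ and $\cB_{v^-}$ is unwound.
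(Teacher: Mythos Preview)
Your overall strategy matches the paper's: necessity via exhibiting $\cP$-invariant subsets of $\cP\setminus\cF$ using Lemmas~\ref{lem1} and~\ref{Lem:FaceInvariance}, and sufficiency by constructing, for each $x_0\in\cP\setminus\cF$, an explicit trajectory to $\cF$ using Lemmas~\ref{lem2} and~\ref{Lem:Line}. But several of the concrete steps you sketch are wrong or incomplete.

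\textbf{Necessity, case (a), $x\in\cB_{v^-}$.} You write that $\ri{\cH}^-(v^-)$ ``contains $x$ when $x$ is off $\cO$ and off $\cF$''. This is false: $x\in\cB_{v^-}$ means $\beta^Tx=\beta^Tv^-$, so $x\notin\ri{\cH}^-(v^-)$. The paper instead splits on whether $\ri{\cH}^-$ is empty. If it is nonempty, that set is already a $\cP$-invariant subset of $\cP\setminus\cF$. If it is empty, then $\cH^-=\cB_{v^-}\cap\cP$, and a point $z$ there with $z\notin\cF\cup\cB^-$ has $\beta^T(Az+a)<0$, so every trajectory from $z$ leaves $\cP$ immediately; no invariant-set argument is used here at all.

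\textbf{Sufficiency, $x_0\notin\cO$.} Lemma~\ref{Lem:Line} requires \emph{both} endpoints off $\cO$. If $\cF\subset\cO$ you cannot pick $y\in\cF$ with $y\notin\cO$, so your ``pick $y\in\cF$ and steer along $[x_0,y]$'' fails. The paper handles this by first backing off from such a $y$ to an intermediate point $z\in\ri{\cP}$ with $z\in\ri{\cH}^-(x_0)$, steering $x_0\to z$ via Lemma~\ref{Lem:Line}, and then $z\to y$.

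\textbf{Sufficiency, $x_0\in\cO$.} You propose to ``first lower $\beta^T x_0$ slightly (leaving $\cO$)''. This cannot be done instantaneously: for $x_0\in\cO$ one has $\beta^T(Ax_0+a+Bu)=0$ for every $u$, so $\beta^T x$ has zero derivative at $x_0$. The correct move (as in the paper) is to first go \emph{along} $\cB_{x_0}$ into $\ri{\cP}$---choose $y\in(\cB_{x_0}\cap\cP)\setminus\cO$ and set $f(x_0,u)=y-x_0$---and only once you are in $\ri{\cP}\setminus\cO$ does the previous argument apply. You also do not treat the subcase $\cB_{x_0}\cap\cP\subset\cO$, where no such $y$ exists; here the paper shows (using (a) and (b)) that $\cB_{x_0}\cap\cF\neq\emptyset$ and invokes Lemma~\ref{lem2} to slide within $\cO$ to $\cF$.
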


\begin{proof}[Proof of Theorem~\ref{Thm:NoIntersection}.]
($\Longrightarrow$) First, suppose (a) does not hold. If
$\ri{\cH}^- \neq \emptyset$, then $\ri{\cH}^-$ is a
$\mathcal{P}$-invariant set by Lemma~\ref{lem1}, and it is in
$\mathcal{P} \setminus \mathcal{F}$, so the conclusion follows
from Proposition~\ref{Thm:NS}. Instead, if $\ri{\cH}^- =
\emptyset$, then $\cH^- = \cB_{v^-} \cap \mathcal{P}$. Since (a)
does not hold, there is $z \in (\mathcal{B}_{v^-} \cap
\mathcal{P}) \setminus (\cF \cup \cB^-)$. For this point $\beta^T
(Az+a) <0$, so for any $u$, $\beta^T (Az+a+Bu) <0$. This implies
any trajectory starting at $z$ immediately leaves $\cP$. Hence, $z
\not \overset{\cP}{ \longrightarrow} \cF$.

Second, suppose (b) does not hold, i.e. $\cP^+ \subset \cO \cap \ri{\cH}^+$.
Let $z \in \cP^+$ and notice that $\cP^+ =\cB_z \cap \cP$. Thus,
$\cB_z \cap \cP \subset \cO$. By
Lemma~\ref{Lem:FaceInvariance} it follows that $\cP^+$ is
$\mathcal{P}$-invariant. In addition, from
the assumption that $\cP^+ \subset \ri{\cH}^+$, $\cP^+ \subset \cP \setminus \cF$.
The conclusion follows from Proposition~\ref{Thm:NS}.

($\Longleftarrow$) Suppose conditions (a) and (b) hold.
For a point $x \in \cF$, it is trivial that $x \overset{\cP}{\longrightarrow} \cF$.

Let $x \in \cP \setminus (\cF \cup \cO)$.
By assumption (a) $x \notin {\cH}^-$ or equivalently $v^- \in \ri{\cH}^-(x)$.
Consequently, there is a point $y \in \cN(v^-) \cap \cF$ satisfying
$y \in \ri{\cH}^-(x)$, where $\cN(v^-)$ is a sufficiently
small neighborhood of $v^-$.  If $\mathcal{F}$ is not in
$\mathcal{O}$, such a point $y$ can be chosen not in
$\mathcal{O}$. Then these two points $x$ and $y$ satisfy the
assumption in Lemma~\ref{Lem:Line}, so $x
\overset{l}{\longrightarrow}  y$, where $l$ is the line segment
joining $x$ and $y$. Clearly, $l$ is in $\mathcal{P}$ as
$\mathcal{P}$ is convex. Hence, $x
\overset{\mathcal{P}}{\longrightarrow} \mathcal{F}$. Otherwise, suppose
$\mathcal{F} \subset \mathcal{O}$. It is easy to show that because $(A,B)$ is
controllable, $\mathcal{B} $ is not parallel to $\mathcal{O}$. It means we
can select a control $u$ so that $f(y,u)$ points outside of
$\mathcal{P}$. Thus, there is a sufficiently small $\epsilon > 0$
such that $\phi_t^u(y), t \in (-\epsilon, 0)$ is in
$\ri{\mathcal{P}} $. Note that $\phi_t^u(y)$ is continuous and $y
\in \ri{\cH}^-(x)$, so there is a point $z \in \phi_t^u(y),
t \in (-\epsilon, 0)$ satisfying $z \in \ri{\mathcal{H}}^-(x)$ and
therefore $z \in \ri{\mathcal{P}}$. Thus,  $\beta^T(Az+a) <0$ by
assumption. Applying Lemma~\ref{Lem:Line} for the two points
$x$ and $z$ leads to $x \overset{l}{\longrightarrow}  z$, where
$l$ is the line segment in $\mathcal{P}$ joining $x$ and $z$.
Considering $z \overset{\mathcal{P}}{\longrightarrow} y \in
\mathcal{F}$, we then have $x
\overset{\mathcal{P}}{\longrightarrow} \mathcal{F}$.

Finally, let $x \in (\cP \cap \cO) \setminus \cF$.
Clearly, $x$ is on the boundary of $\cP$. If $\cB_x \cap \mathcal{P}
\not \subset \mathcal{O}$, then  select a point $y \in
(\mathcal{B}_x \cap \mathcal{P} ) \setminus \mathcal{O}$ and let
$u$ be chosen such that $f(x,u)=Ax+a+Bu =y-x$, which is possible
because both $(Ax+a)$ and $(y-x)$ are in $\text{Im}(B)$. Note that
this vector field $f(x,u)$ points inside the polytope
$\mathcal{P}$. This implies the trajectory instantaneously enters
the interior of $\mathcal{P}$, which is not in $\mathcal{O}$ any
more. Then by the previous argument, it can be driven to reach
$\mathcal{F}$ through a line. Otherwise, if $\mathcal{B}_x \cap
\mathcal{P} \subset \mathcal{O}$, then the whole set
$\mathcal{B}_x \cap \mathcal{P}$ is on the boundary of
$\mathcal{P}$, and moreover it comprises either $\cP^+$ or
$\arg \min \{\beta^Tx: x \in \mathcal{F}\}$.
From condition (b), $\cP^+ \subset \cH^-(v^+)$ and this implies
$\cP^+ \cap \cF \neq \emptyset$. From condition (a),
$\ri{\cH}^- =\emptyset$ so
$\arg \min \{\beta^Tx: x \in \mathcal{F}\} \subset \mathcal{H}^+(v^-)$, which
implies $\arg \min \{\beta^Tx: x \in \mathcal{F}\} \cap \mathcal{F} \neq
\emptyset$. For both cases, we get $\mathcal{B}_x \cap \mathcal{F}
\neq \emptyset$. Now we select a point $y \in \mathcal{B}_x \cap
\mathcal{F}$. Then these two points satisfy the assumption in
Lemma~\ref{lem2}. Thus, it follows that $x \overset{l}{\longrightarrow} y$,
where $l$ is the line segment joining from $x$ to $y$, and so $x
\overset{\mathcal{P}}{\longrightarrow} \mathcal{F}$.
\end{proof}

Theorem~\ref{Thm:NoIntersection} gives necessary and sufficient
conditions for the reachability problem $\mathcal{P}
\overset{\mathcal{P}}{\longrightarrow} \mathcal{F} $. This result,
in turn, can be tied to failure sets, apropos
Proposition~\ref{Thm:NS}, which are the $\mathcal{P}$-invariant sets in
$\mathcal{P} \setminus \mathcal{F}$:
\begin{eqnarray}
\label{eq:FailureSets1} \hspace{-2.3cm} \cA^- & = & \cH^-
\setminus (\cF \cup \cB^-) \\ \label{eq:FailureSets2} \cA^+ & = &
\left\{\begin{array}{ll} \cP^+
& \qquad \qquad \qquad \quad \text{if $\cP^+ \subset \cO \cap \ri{\cH}^+$} \qquad \qquad \\
\emptyset
& \qquad \qquad \qquad \quad \text{otherwise.}
\end{array}
\right.
\end{eqnarray}

\begin{corollary}
\label{Cor:NonIntersect}
Let $\mathcal{A}=\mathcal{A}^- \cup \mathcal{A}^+$. Then
$\text{Reach}(\cP,\cF) = \cP \setminus \cA$. Moreover,
$\text{Reach}(\cP, \cF) \overset{\text{Reach}(\cP,\cF)}{\longrightarrow} \cF$ and
$\mathcal{A} \not \overset{\mathcal{P}}{ \longrightarrow} \mathcal{F}$.
\end{corollary}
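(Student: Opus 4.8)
The plan is to identify $\cA=\cA^-\cup\cA^+$ as the maximal $\cP$-invariant subset of $\cP\setminus\cF$; all three assertions then drop out. Indeed, by an argument in the spirit of Proposition~\ref{Thm:NS}, a point of $\cP$ fails to reach $\cF$ exactly when it lies in some $\cP$-invariant set disjoint from $\cF$ — a trajectory that ever enters such a set cannot leave it before leaving $\cP$, and conversely the set of all non-reaching points is itself $\cP$-invariant and misses $\cF$ — so $\reach(\cP,\cF)$ is the complement in $\cP$ of the maximal $\cP$-invariant set contained in $\cP\setminus\cF$. Hence it suffices to prove two inclusions: (I) $\cA$ is $\cP$-invariant and disjoint from $\cF$; and (II) every point of $\cP\setminus\cA$ reaches $\cF$.

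For (I) I would handle the two pieces separately and glue with Lemma~\ref{lem:elem}. When $\cA^+\neq\emptyset$ it equals $\cP^+=\cB_z\cap\cP$ for $z\in\cP^+$, is contained in $\cO$, hence is $\cP$-invariant by Lemma~\ref{Lem:FaceInvariance}, and it lies in $\ri{\cH}^+=\{y\in\cP:\beta^Ty>\beta^Tv^+\}$, whose points carry strictly larger $\beta$-component than any point of $\cF$, so $\cA^+\cap\cF=\emptyset$. For $\cA^-=\cH^-\setminus(\cF\cup\cB^-)$ the ingredients are: $\cH^-$ and $\ri{\cH}^-$ are $\cP$-invariant (Lemma~\ref{lem1}); and the $\beta$-component of any trajectory is non-increasing, with vanishing derivative exactly on $\cO$, since $\beta^T(Ax+a)\le 0$ on $\cP$ with equality iff $x\in\cO$. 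So a trajectory from $\cA^-$ stays in $\cH^-$; it can remain at the level $\beta^Tv^-$ only while it stays in $\cO$, and otherwise it strictly descends into $\ri{\cH}^-$ and stays there forever. Unwinding the two-case definition of $\cB^-$ — in particular that $\cB^-=\emptyset$ means $\cB_{v^-}\cap\cO\cap\cF=\emptyset$ — shows such a trajectory meets neither $\cF$ nor $\cB^-$ before leaving $\cP$, so $\cA^-$ is $\cP$-invariant and disjoint from $\cF$. Thus $\cA$ is $\cP$-invariant with $\cA\cap\cF=\emptyset$, and since trajectories from $\cA$ remain in $\cA\subseteq\cP\setminus\cF$ while in $\cP$, no point of $\cA$ reaches $\cF$; this already yields $\reach(\cP,\cF)\cap\cA=\emptyset$ and $\cA \not \overset{\cP}{ \longrightarrow } \cF$.

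For (II) I would re-run the sufficiency ($\Longleftarrow$) direction of the proof of Theorem~\ref{Thm:NoIntersection}. In that argument, hypotheses (a) and (b) enter only through a few local facts about the point $x$ at hand: that $x\notin\cF$ forces $x\notin\cH^-$ unless $x\in\cB^-$, and that in the boundary case $\cB_x\cap\cP\subseteq\cO$ the set $\cB_x\cap\cP$ meets $\cF$. Each follows from $x\notin\cA$: if $x\notin\cF\cup\cB^-$ and $x\in\cH^-$ then $x\in\cA^-$; if $\cB_x\cap\cP=\cP^+\subseteq\cO$ and $\cP^+\cap\cF=\emptyset$ then $\cP^+\subseteq\cO\cap\ri{\cH}^+$, i.e.\ $x\in\cA^+$; the $\beta$-minimal face case is analogous, and the subcase $x\in\cB^-$ is handled directly by Lemma~\ref{lem2} while $x\in\cF$ is trivial. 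With these in hand, the line-segment constructions of Lemmas~\ref{Lem:Line} and~\ref{lem2} and the short perturbation step when $\cF\subseteq\cO$ carry over verbatim, giving $\cP\setminus\cA\subseteq\reach(\cP,\cF)$, hence $\reach(\cP,\cF)=\cP\setminus\cA$. For the constrained assertion $\reach(\cP,\cF)\overset{\reach(\cP,\cF)}{\longrightarrow}\cF$ I would then note that \emph{any} trajectory reaching $\cF$ must avoid $\cA$, since every state it visits reaches $\cF$ whereas no state of $\cA$ does; so the trajectories constructed above stay inside $\cP\setminus\cA=\reach(\cP,\cF)$. The step I expect to be the main obstacle is the boundary analysis in (I): carefully showing that a trajectory pinned to the hyperplane $\cB_{v^-}$ is in fact pinned to $\cO$, and then ruling out, via the dichotomy defining $\cB^-$, that it slips into $\cB^-$ or $\cF$; a minor point is to use that $\ri{\cH}^-$ and $\ri{\cH}^+$ are the strict half-versions $\{\beta^Ty<\beta^Tv^-\}$ and $\{\beta^Ty>\beta^Tv^+\}$ within $\cP$, which is exactly what makes $\cA^+$ disjoint from $\cF$ and $\cP^+$ a face to which Lemma~\ref{Lem:FaceInvariance} applies.
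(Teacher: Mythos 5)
Your proposal is correct and follows what the paper intends: the corollary is stated without a separate proof precisely because it is the pointwise localization of the two directions of Theorem~\ref{Thm:NoIntersection}, and your argument carries that out — the necessity machinery (Lemmas~\ref{lem1} and~\ref{Lem:FaceInvariance} plus the monotonicity of $\beta^T\phi^u_t$ and the dichotomy defining $\cB^-$) shows $\cA$ is $\cP$-invariant and disjoint from $\cF$, while the sufficiency construction uses conditions (a) and (b) only through facts that follow from $x\notin\cA$ at the point in question. Your closing observation, that any trajectory reaching $\cF$ automatically remains in $\reach(\cP,\cF)$ because each intermediate state also reaches $\cF$, is exactly the right way to get the constrained statement $\reach(\cP,\cF)\overset{\reach(\cP,\cF)}{\longrightarrow}\cF$.
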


We have identified $\text{Reach}(\cP,\cF)$, the maximal reachable set of $\mathcal{F}$ in
$\mathcal{P}$. This set, in
general, is not closed. This leads to difficulties with unbounded
control effort and unbounded time to reach $\mathcal{F}$.
Consequently, once failure sets have been identified, it is
desirable to remove them via a procedure that both
well-approximates the maximal reachable set and
also yields a closed $n$-dimensional polytope that can reach
$\mathcal{F}$. The approach is to ``cut off'' failure sets from
$\mathcal{P}$ by one of two procedures. One procedure is for
removing the failure $\mathcal{A}^-$ by cutting along
a hyperplane which is parallel to a slightly shifted version of
$\mathcal{B}$. The second procedure is for removing $\mathcal{A}^+$
by cutting exactly along a hyperplane parallel to
$\mathcal{B}$. These cuts are chosen arbitrarily close to the
failure sets and so that the remaining polytope has no failure
sets. It should be noted that the following procedure can be easily adapted to
convert explicit bounds on the controls to an appropriate $\epsilon$.

\begin{algorithm}
\label{Alg:PracticalCut} (Let $\epsilon>0$ be  sufficiently small.)

\begin{enumerate}
\item If $\mathcal{A}^- \neq \emptyset$, select affinely
independent points $z_1, \dots, z_k$ in $\mathcal{F} \cap
\mathcal{B}_{v^-}$ and also in the relative boundary of the facet
containing $\cF$, and then select points $z_{k+1}, \dots, z_n$
in $\ri{\mathcal{P}}\cap \ri{\cH}^+(v^-)$ such that
$\mathcal{G}:=\text{aff}\{z_1, \dots, z_n \}$ is of dimension
$n-1$ and $\max \limits_{x \in \mathcal{P} \cap \mathcal{G}}
\text{dist}(x, \mathcal{B}_{v^-}) =\epsilon$. Then divide
$\mathcal{P}$ along $\mathcal{G}$. \item If $\mathcal{A}^+ \neq
\emptyset$, select a point $z \in \mathcal{P}$ such that $\max
\limits _{x \in \mathcal{A}^+} \text{dist} (x, \mathcal{B}_z)
=\epsilon$. Then divide $\mathcal{P}$ along $\mathcal{B}_z$.
\end{enumerate}
\end{algorithm}

Let $\mathcal{A}_{\epsilon_-}$, $\mathcal{A}_{\epsilon_+}$,  and
$\text{Reach}_\epsilon(\cP,\cF)$ be the collection of sets after
the application of the division rules in
Algorithm~\ref{Alg:PracticalCut}, where $\mathcal{A}_{\epsilon_-}$
contains $\mathcal{A}^{-}$, $\mathcal{A}_{\epsilon_+}$ contains
$\mathcal{A}^+$, and $\text{Reach}_\epsilon(\cP,\cF)$ is the
remainder. Clearly, these three sets (if not empty) are
$n$-dimensional polytopes and $\mathcal{F} \subset
\text{Reach}_\epsilon(\cP,\cF) \subseteq
 \text{Reach}(\mathcal{P}, \mathcal{F})$. Then we have the
following corollary which follows directly from
Algorithm~\ref{Alg:PracticalCut} and
Theorem~\ref{Thm:NoIntersection}.

\begin{corollary}\label{Cor:PracticalCut}  \
\begin{enumerate}
\item[(a)] $\text{Reach}_\epsilon(\cP,\cF)
\overset{\text{Reach}_\epsilon(\cP,\cF)}{\longrightarrow}
\mathcal{F}$. \item[(b)] $\lim _{\epsilon \to 0}
\mu\left[\text{Reach}(\cP,\cF) \setminus
\text{Reach}_\epsilon(\cP,\cF) \right] =0$. \item[(c)] For any $x
\in \ri{\mathcal{P}}$, if $x
\overset{\mathcal{P}}{\longrightarrow} \mathcal{F}$ there exists
an $\epsilon>0$ such that $x \in \text{Reach}_\epsilon(\cP,\cF)$.
\end{enumerate}
\end{corollary}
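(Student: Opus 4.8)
The plan is to reduce all three items to Corollary~\ref{Cor:NonIntersect} and Theorem~\ref{Thm:NoIntersection} applied to the trimmed polytope $\cP':=\text{Reach}_\epsilon(\cP,\cF)$ produced by Algorithm~\ref{Alg:PracticalCut}. For part~(a) I would first check that $(\cP',\cF)$ again satisfies Assumption~\ref{assum1}: (A1)--(A2) involve only $(A,B)$; (A3) holds because $\cP'$ is $n$-dimensional and $\cP'\subseteq\cP$, so every point of $\ri{\cP'}$ has a full neighbourhood inside $\cP$ and hence lies in $\ri{\cP}$, which misses $\cO$; and (A4) holds once $\epsilon$ is small, because the cut $\cG$ of Step~1 is anchored on $\cF\cap\cB_{v^-}$ in the relative boundary of the facet carrying $\cF$ and tilts away from $\cF$ into $\ri{\cH}^+(v^-)$, while the cut $\cB_z$ of Step~2 sits at $\beta$-level $\beta^Tw-\epsilon$ with $w\in\cP^+$, and $\cA^+\neq\emptyset$ entails $\beta^Tw>\beta^Tv^+$, so $\cB_z$ is strictly above every point of $\cF$ for $\epsilon$ small. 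Next I would recompute the failure sets \eqref{eq:FailureSets1}--\eqref{eq:FailureSets2} for $(\cP',\cF)$: the data $\beta$, $v^-$, $v^+$, $\cB^-$ depend only on $\cF$ and $\beta$ and are unchanged; Step~1 removes all of $\cH^-$ lying outside $\cF$ (up to a lower-dimensional remnant) together with the thin layer between $\cB_{v^-}$ and $\cG$, so $\{x\in\cP':\beta^Tx\le\beta^Tv^-\}\subseteq\cF\cup\cB^-$ and condition~(a) of Theorem~\ref{Thm:NoIntersection} holds for $\cP'$; and the new top slice $\cB_z\cap\cP'$ created by Step~2 (present only when $\cA^+\neq\emptyset$) is $(n-1)$-dimensional, parallel to $\cB$, and at a $\beta$-level strictly below $\cP^+$, so it cannot be contained in $\cO$ --- else $\cO$ would be the hyperplane $\cB_z$ and $\cP^+\subseteq\cB_z$ would be impossible --- whence $(\cP')^+\not\subseteq\cO\cap\ri{\cH}^+$ and condition~(b) holds. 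Theorem~\ref{Thm:NoIntersection} then yields $\cP'\overset{\cP'}{\longrightarrow}\cF$, which is~(a).

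For part~(b), Corollary~\ref{Cor:NonIntersect} gives $\text{Reach}(\cP,\cF)=\cP\setminus\cA$ with $\cA=\cA^-\cup\cA^+$, and by construction $\text{Reach}_\epsilon(\cP,\cF)\supseteq\cP\setminus(\cA_{\epsilon_-}\cup\cA_{\epsilon_+})$ with $\cA^-\subseteq\cA_{\epsilon_-}$ and $\cA^+\subseteq\cA_{\epsilon_+}$, so
\[
\text{Reach}(\cP,\cF)\setminus\text{Reach}_\epsilon(\cP,\cF)\ \subseteq\ (\cA_{\epsilon_-}\setminus\cA^-)\cup(\cA_{\epsilon_+}\setminus\cA^+).
\]
Modulo the $\mu$-null sets $\cF$, $\cB^-$, $\cP^+$, the first set lies in the $\beta$-slab $\{x\in\cP:\beta^Tv^-<\beta^Tx\le\beta^Tv^-+\epsilon\}$ and the second in $\{x\in\cP:\beta^Tw-\epsilon\le\beta^Tx\}$ with $w\in\cP^+$; each such slab has volume at most $\epsilon$ times a constant depending only on $\cP$ and $\beta$. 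Letting $\epsilon\to0$ gives $\mu[\text{Reach}(\cP,\cF)\setminus\text{Reach}_\epsilon(\cP,\cF)]\to0$.

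For part~(c), let $x\in\ri{\cP}$ with $x\overset{\cP}{\longrightarrow}\cF$. By Corollary~\ref{Cor:NonIntersect}, $x\in\cP\setminus\cA$, so $x\notin\cA^-$ and $x\notin\cA^+$. Since $x\in\ri{\cP}$, it lies neither in $\cF$ (which is on $\partial\cP$) nor in $\cB^-$ (which is inside $\cO$, and $\ri{\cP}\cap\cO=\emptyset$), so $x\notin\cA^-=\cH^-\setminus(\cF\cup\cB^-)$ forces $\beta^Tx>\beta^Tv^-$; likewise $x\in\ri{\cP}$ forces $\beta^Tx<\beta^Tw$ for $w\in\cP^+$. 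Taking $\epsilon$ smaller than $\min\{\beta^Tx-\beta^Tv^-,\ \beta^Tw-\beta^Tx\}$ and small enough for Algorithm~\ref{Alg:PracticalCut}, the slab descriptions of $\cA_{\epsilon_-}$ and $\cA_{\epsilon_+}$ from part~(b) show $x\notin\cA_{\epsilon_-}\cup\cA_{\epsilon_+}$, i.e.\ $x\in\text{Reach}_\epsilon(\cP,\cF)$.

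The main obstacle is the geometric bookkeeping in part~(a): confirming that trimming creates no new $\cP'$-invariant subset of $\cP'\setminus\cF$. Concretely one must show that the tilted hyperplane $\cG$ of Step~1 genuinely separates \emph{all} of $\cH^-\setminus(\cF\cup\cB^-)$ from $\cF$ while overshooting $\cH^-$ by only an $O(\epsilon)$ layer, and that the facet $\cB_z\cap\cP'$ of Step~2 --- parallel to $\cB$ but at a level strictly below $\cP^+$ --- cannot be swallowed by $\cO$ and therefore cannot trigger Lemma~\ref{Lem:FaceInvariance}. The same slab estimate underlies the volume bound in~(b); once it is in place, parts~(b) and~(c) are routine.
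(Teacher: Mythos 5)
Your proposal is correct and follows exactly the route the paper intends: the paper offers no written proof, stating only that the corollary ``follows directly from Algorithm~\ref{Alg:PracticalCut} and Theorem~\ref{Thm:NoIntersection},'' and your argument --- re-verifying Assumption~\ref{assum1} and conditions (a)--(b) of Theorem~\ref{Thm:NoIntersection} on the trimmed polytope for part~(a), and the $O(\epsilon)$ slab estimates in the $\beta$-direction for parts~(b) and~(c) --- is the natural elaboration of that claim. The ``main obstacle'' you flag (that the tilted cut $\cG$ must capture all of $\cA^-$ while overshooting by only an $O(\epsilon)$ layer, and that $\cB_z\cap\cP'$ cannot lie in $\cO$) is likewise left implicit in the paper's specification of Algorithm~\ref{Alg:PracticalCut}, so your treatment is, if anything, more careful than the source.
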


\begin{example}
Consider the example in Figure~\ref{Fig:ce1} which illustrates  
the first step of Algorithm~\ref{Alg:PracticalCut}. 
\begin{figure}[!t]
\begin{center}
\psfrag{f}{$\cF$}\psfrag{beta}{$\beta$}
\includegraphics[width= 0.5 \linewidth]{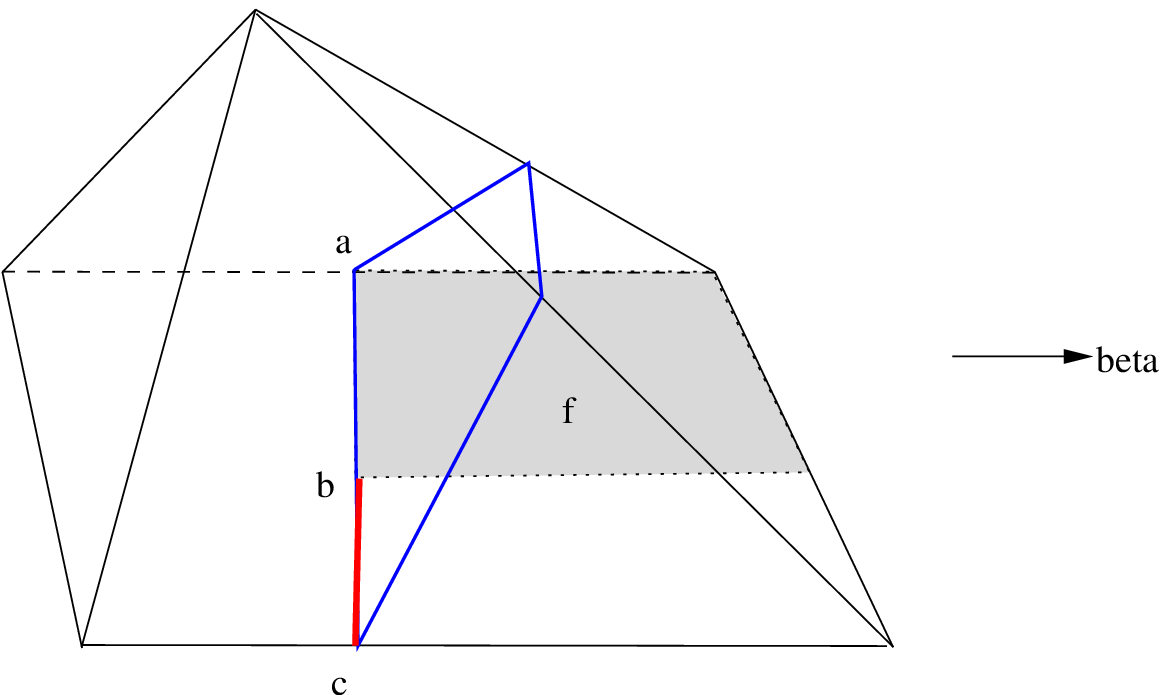}
\caption{ \label{Fig:ce1}}
\end{center}
\end{figure}
Suppose that there a failure set to reach $\cF$ along the red segment. If one were to cut
only along points in $\mathcal{F} \cap \mathcal{B}_{v^-}$ which corresponds to the blue
plane, then the failure set would not be cut off. Instead, if points in the relative boundary 
of the facet containing $\cF$ can be used, then this failure set can be removed. 
\end{example}
\begin{example}
\label{example1}
A simple example is presented to illustrate the possible failure sets and how
Algorithm~\ref{Alg:PracticalCut} cuts them off. Consider the system
\[
\begin{array}{l}
\dot x_1 =x_2, \\
\dot x_2 =u.
\end{array}
\]
It can be easily verified that $\mathcal{O}=\{(x_1, x_2): x_2=0\}$, the $x_1$ axis, and that $\mathcal{B}$ is
just the $x_2$ axis.
\begin{figure}[!t]
\begin{center}
\psfrag{v1}[][][\scale]{$v^-$} \psfrag{v2}[][][\scale]{$v^+$}
\psfrag{x1}[][][\scale]{$x_1$} \psfrag{x2}[][][\scale]{$x_2$}
\psfrag{f}[][][\scale]{$\mathcal{F}$}
\psfrag{o}[][][\scale]{$\mathcal{O}$}
\psfrag{a1}[][][\scale]{$\mathcal{A}^-$}
\psfrag{a2}[][][\scale]{$\mathcal{A}^+$}
\psfrag{p1}[][][\scale]{$\mathcal{A}_{\epsilon_-}$}
\psfrag{p2}[][][\scale]{$\mathcal{A}_{\epsilon_+}$}
\psfrag{b}[][][\scale]{$\mathcal{B}$}
\psfrag{beta}[][][\scale]{$\beta$}
\psfrag{pp}[][][\scale]{$\mathcal{P}'$}
\psfrag{ppp}[][][\scale]{$\text{Reach}_\epsilon(\cP,\cF)$}
\includegraphics[width= 0.99 \linewidth]{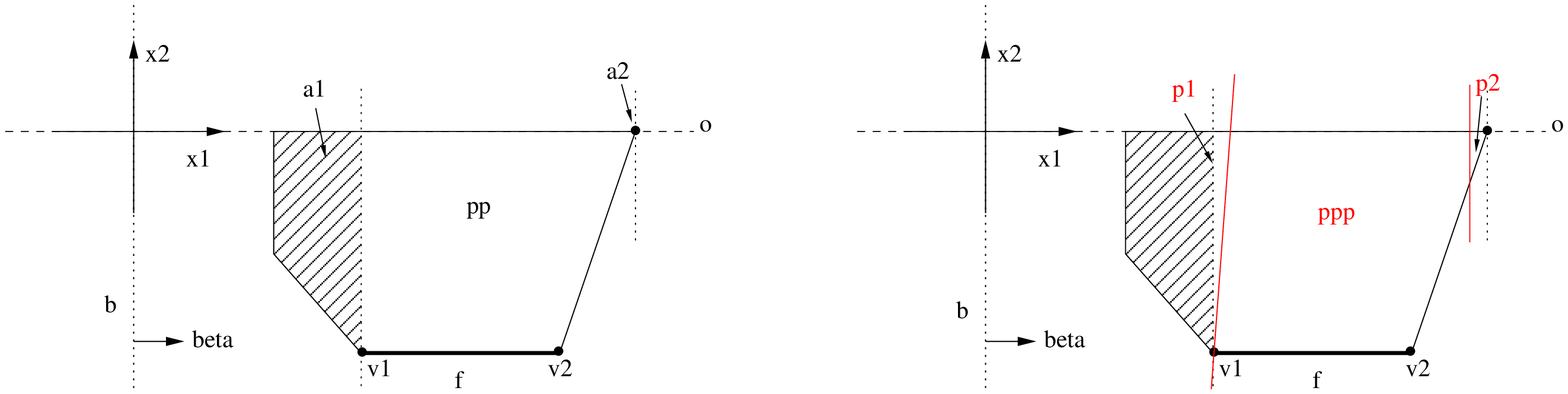}
\caption{An example. \label{Fig:Ex1}}
\end{center}
\end{figure}
Suppose that the polytope $\mathcal{P}$ and the target set
$\mathcal{F}$ are as shown in Figure~\ref{Fig:Ex1}. The hyperplane
$\mathcal{O}$ touches the polytope but has empty intersection with
its interior. We get $\mathcal{A}^-=\cH^- \setminus \cF$ which is
the patterned region in Figure~\ref{Fig:Ex1}; and $\mathcal{A}^+$
is just a point. $\text{Reach}(\mathcal{P}, \mathcal{F})$ is the
set $\mathcal{P}'$ not including the boundary of $\mathcal{A}^-$
and $\mathcal{A}^+$. This set is not closed. Moreover, if an
initial state $x_0 \in \mathcal{P}'$ approaches the boundary of
$\mathcal{A}^-$, the control input $u(x_0)$ tends to infinity in
order to reach $\mathcal{F}$ with constraint in $\mathcal{P}$.
Also, if $x_0 \in \mathcal{P}'$ approaches the boundary of
$\mathcal{A}^+$, the time to reach $\mathcal{F}$ tends to
infinity. Applying Algorithm~\ref{Alg:PracticalCut}, a good closed
$\epsilon$-approximation $ \text{Reach}_\epsilon(\cP,\cF)$ of
$\mathcal{P}'$ is given on the right of Figure~\ref{Fig:Ex1}.
\end{example}

\section{Control Synthesis on Simplices}
\label{sec:simplices}

Consider an $n$-dimensional simplex $\mathcal{S}$ with vertices
$v_0, v_1, \dots, v_n$ and facets $\mathcal{F}_0, \dots,
\mathcal{F}_n$ (the facet is indexed by the vertex not contained).
Let $I := \{ 1,\ldots,n \}$.
\begin{problem}\label{Prbm:CTF}
Consider system (\ref{eq:AffineSystem}) defined on $\mathcal{S}$.
Find an affine feedback control $u=Fx+g$ such  that for
every $x_0 \in \mathcal{S}$ there exist $T \geq 0 $ and $\epsilon
>0$ satisfying:
\begin{enumerate}
\item[(i)] $\phi_t^u(x_0) \in \mathcal{S}$ for all $t \in [0, T]$;
\item[(ii)] $\phi_T^u(x_0) \in \mathcal{F}_0$;
\item[(iii)] $\phi_t^u(x_0) \notin \mathcal{S}$ for all $t \in (T, T+\epsilon)$.
\end{enumerate}
\end{problem}
Condition (iii) is interpreted to mean that the closed-loop
dynamics on $\mathcal{S}$ are extended to a neighborhood of
$\mathcal{S}$.

\begin{definition}
The {\em invariance conditions} for $\cS$ require that there exist
$u_0,\ldots,u_n \in \RR^m$ such that:
\begin{equation}
\label{eq:inv}
h_j \cdot (Av_i+a+Bu_i) \leq 0 \,, \qquad \qquad \qquad i \in \{0,\dots, n\}, \qquad j \in I \setminus \{ i \} \,.
\end{equation}
\end{definition}

\begin{theorem}{\cite{HVS06,RB06}}
\label{Thm:LinearSimplex}
Given the system \eqref{eq:AffineSystem} and an affine feedback $u(x)=Kx+g$, 
where $K \in \RR^{m \times n}$, $g \in \RR^m$, and $u_0 = u(v_0),\ldots,u_n=u(v_n)$, 
the closed-loop system satisfies $\cS \overset{\cS}{\longrightarrow} \cF_0$ if and only if 
\begin{enumerate}
\item[(a)] 
The invariance conditions \eqref{eq:inv} hold. 
\item[(b)] 
There is no equilibrium in $\mathcal{\cS}$.
\end{enumerate}
\end{theorem}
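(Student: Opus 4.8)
The plan is to study the closed-loop vector field $x\mapsto (A+BK)x+a+Bg$ through its vertex values $p_i:=Av_i+a+Bu_i$, $i\in\{0,\dots,n\}$. Since this field is affine and $v_0,\dots,v_n$ are affinely independent, for $x=\sum_i\lambda_i v_i$ in barycentric coordinates it equals $\sum_i\lambda_i p_i$. Hence two reformulations, which I would record first since everything hangs on them: the invariance conditions \eqref{eq:inv} say precisely that $h_j\cdot p_i\le 0$ for all $j\in I$ and $i\neq j$, which by convexity of each facet is equivalent to $h_j\cdot\bigl((A+BK)x+a+Bg\bigr)\le 0$ for every $x\in\cF_j$ and every $j\in I$; and $x^\ast=\sum_i\lambda_i v_i\in\cS$ is an equilibrium iff $\sum_i\lambda_i p_i=0$, i.e. iff $0\in\conv\{p_0,\dots,p_n\}$.

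For necessity I would argue contrapositively. Suppose \eqref{eq:inv} fails, say $h_j\cdot p_i>0$ for some $j\in I$, $i\neq j$; then $v_i\in\cF_j$ and $\cF_j\neq\cF_0$. Choosing $x_0$ in the relative interior of $\cF_j$ close enough to $v_i$ that $h_j\cdot\bigl((A+BK)x_0+a+Bg\bigr)>0$ (possible by continuity and affinity), the quantity $h_j\cdot\phi_t^u(x_0)$ is strictly increasing at $t=0$ while $\cS\subseteq\{x:h_j\cdot x\le h_j\cdot v_i\}$, so $\phi_t^u(x_0)\notin\cS$ for all small $t>0$; since $x_0\notin\cF_0$ there is no $T\ge 0$ witnessing $x_0\overset{\cS}{\longrightarrow}\cF_0$, so $\cS\not\overset{\cS}{\longrightarrow}\cF_0$. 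If instead (b) fails, there is an equilibrium $x^\ast\in\cS$; if $x^\ast\notin\cF_0$ the constant trajectory at $x^\ast$ never meets $\cF_0$, and in general an equilibrium in $\cS$ destroys the uniform flow functional that (as the sufficiency argument shows) is exactly what makes (a) enough, so again $\cS\not\overset{\cS}{\longrightarrow}\cF_0$.

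For sufficiency, assume (a) and (b) and combine two ingredients. First, by the reformulation of (a), at every boundary point $x\in\partial\cS\setminus\cF_0$ the closed-loop field lies in the tangent cone of $\cS$ at $x$, since that cone is the intersection of the inward halfspaces $\{v:h_j\cdot v\le 0\}$ over the facets $\cF_j$ through $x$, all of which have $j\in I$; by a Nagumo-type argument (the invariance-condition lemma of \cite{HVS01,HVS04}) it follows that no trajectory starting in $\cS$ can exit $\cS$ except through $\cF_0$. Second, by the reformulation of (b), $0\notin\conv\{p_0,\dots,p_n\}$, a compact convex set, so strict separation yields $\xi\in\RR^n$ and $\delta>0$ with $\xi^Tp_i\ge\delta$ for all $i$, whence $\xi^T\bigl((A+BK)x+a+Bg\bigr)\ge\delta$ on $\cS$; thus $t\mapsto\xi^T\phi_t^u(x_0)$ grows at rate at least $\delta$ and must leave the bounded interval $\{\xi^Tx:x\in\cS\}$, hence by the first ingredient $\phi_T^u(x_0)\in\cF_0$ for some $T\le\bigl(\max_{\cS}\xi^Tx-\min_{\cS}\xi^Tx\bigr)/\delta$. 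As $x_0\in\cS$ was arbitrary, $\cS\overset{\cS}{\longrightarrow}\cF_0$.

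The main obstacle is the first ingredient of the sufficiency direction: converting the vertex inequalities in (a) into a genuine ``no-leakage'' statement through $\cF_1,\dots,\cF_n$, being careful at lower-dimensional faces where the field can be tangent to several facets at once. This is precisely the invariance-condition analysis of \cite{HVS01,HVS04}, which I would invoke rather than reprove; the remaining pieces — the barycentric bookkeeping, the separating-hyperplane functional, and the necessity direction — are routine.
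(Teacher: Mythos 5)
The paper offers no proof of this theorem (it is imported from \cite{HVS06,RB06}), so I can only judge your argument on its own terms. Your sufficiency direction and your proof that (a) is necessary are correct and standard: the barycentric identity $f(x)=\sum_i\lambda_ip_i$, the equivalence of (b) with $0\notin\conv\{p_0,\dots,p_n\}$, the uniformly increasing functional $\xi^Tx$ obtained by strictly separating $0$ from $\conv\{p_i\}$, and the invariance lemma of \cite{HVS01,HVS04}, whose tangency subtleties you are right to flag and defer.

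The genuine gap is the necessity of (b) when the closed-loop equilibrium lies on $\cF_0$ itself. Your phrase ``an equilibrium in $\cS$ destroys the uniform flow functional'' only shows that your sufficiency \emph{mechanism} breaks, not that reachability fails; and under the paper's literal semantics for $\cS\overset{\cS}{\longrightarrow}\cF_0$ (Definition~2.1: reach $\cF_0$, with no requirement to exit $\cS$) the implication is in fact false. Take $n=2$, $v_0=(1,1)$, $v_1=(0,0)$, $v_2=(2,0)$, $\cF_0=\conv\{v_1,v_2\}$, and closed-loop field $f(x)=(-x_2,\,-6x_1+x_2)$ (realizable with $B=(0,1)^T$, $K=0$, $g=0$; then $\cO=\{x_2=0\}$, so even (A3) holds). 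One checks $h_1\cdot f=-6x_1/\sqrt2<0$ on $\cF_1$ and $h_2\cdot f=(2x_2-6x_1)/\sqrt2=-4x_1/\sqrt2\le0$ on $\cF_2$ (where $x_2=x_1$), so (a) holds; yet $p_1=f(v_1)=0$, so (b) fails. The origin is a saddle with eigenvalues $3,-2$ whose stable eigenline $x_2=2x_1$ meets $\cS$ only at the origin, so every trajectory from $\cS\setminus\{v_1\}$ exits through $\cF_0$ in finite time, while $v_1\in\cF_0$ already; hence $\cS\overset{\cS}{\longrightarrow}\cF_0$ holds although (b) fails. The resolution is that the cited result characterizes solvability of Problem~\ref{Prbm:CTF}, whose condition (iii) requires trajectories to actually leave $\cS$; under that reading the equilibrium-on-$\cF_0$ case is immediate (the trajectory starting at the equilibrium never leaves $\cS$), and your necessity argument closes. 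You should either prove the theorem for that exit semantics or record the weaker hypothesis explicitly; as literally stated, the ``only if'' of (b) cannot be proved.
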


Theorem~\ref{Thm:LinearSimplex} cannot be used directly for our present work because it enforces that affine feedbacks
be used. Unfortunately, this class is not large enough if solvability of RCP by open-loop control is the starting 
point. The next result shows that for hypersurface systems on simplices, one sufficiently rich 
feedback class is piecewise affine feedbacks. The proof is in the Appendix.
\begin{theorem}{\cite{LB06}}
\label{Prop:ControlforSimplex}
If $\mathcal{S} \overset{\mathcal{S}}{\longrightarrow} \mathcal{F}_0$
then there exists a piecewise affine state feedback
$u=F_{\sigma} x + g_{\sigma}$, $\sigma \in \{1, 2\}$
that also achieves $\mathcal{S} \overset{\mathcal{S}}{\longrightarrow} \mathcal{F}_0$.
\end{theorem}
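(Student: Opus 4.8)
The plan is to start from the hypothesis $\mathcal{S}\overset{\mathcal{S}}{\longrightarrow}\mathcal{F}_0$ and extract the geometric data that must hold for an open-loop solution, then exhibit an explicit two-piece affine feedback. Since $\operatorname{rk}(B)=n-1$, as in Section~\ref{sec:reachpoly} there is a unit vector $\beta$ normal to $\mathcal{B}=\operatorname{Im}(B)$ with $\beta^{T}(Ax+a)\le 0$ on $\mathcal{S}$ (after relabeling, this is forced once $\ri{\mathcal{S}}\cap\mathcal{O}=\emptyset$; if $\ri{\mathcal{S}}\cap\mathcal{O}\neq\emptyset$ one first checks that reachability fails unless the target facet meets things favourably, but in any case the argument localizes near $\mathcal{O}$). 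Orient the vertices so that $v_{1},\dots,v_{n}$ span $\mathcal{F}_{0}$. The key preliminary step is to invoke Theorem~\ref{Thm:NoIntersection} on $\mathcal{S}$ (a simplex is a polytope): $\mathcal{S}\overset{\mathcal{S}}{\longrightarrow}\mathcal{F}_{0}$ forces $\cH^{-}\setminus(\mathcal{F}_{0}\cup\cB^{-})=\emptyset$ and $\mathcal{S}^{+}\not\subset\mathcal{O}\cap\ri{\cH}^{+}$, and via Corollary~\ref{Cor:NonIntersect} the failure sets $\mathcal{A}^{-},\mathcal{A}^{+}$ are empty. This tells us precisely where on the simplex the flow can be made transversal to $\mathcal{B}_{x}$ (off $\mathcal{O}$) and where we are forced to sit at an equilibrium (the single vertex, if any, lying on $\mathcal{O}$ with $\mathcal{B}_{v}\cap\mathcal{S}\subset\mathcal{O}$).

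Next I would build the feedback. The idea is to subdivide $\mathcal{S}$ by the hyperplane $\mathcal{B}_{z}$ for a suitable $z$ parallel to $\mathcal{B}$ — concretely, slicing off a thin slab near the ``high-$\beta$'' vertex $v_{0}$ when $v_{0}$ is the problematic vertex on $\mathcal{O}$, or a slab near the low-$\beta$ part of $\mathcal{F}_{0}$. This produces two sub-polytopes $\mathcal{S}_{1},\mathcal{S}_{2}$ meeting along a facet parallel to $\mathcal{B}$. On the region not touching $\mathcal{O}$ one uses the $\beta$-decreasing structure: choose the affine feedback so the closed-loop vector field is a constant multiple of $(y-x)$-type drift toward a fixed point $y\in\ri{\cH}^{-}\cap\mathcal{F}_{0}$, exactly as in the construction inside the proof of Theorem~\ref{Thm:NoIntersection} via Lemma~\ref{Lem:Line}; since the target of this piece is the shared facet, and $\beta^{T}(Ax+a+Bu)<0$ uniformly there, all trajectories cross into the next piece in finite time with no equilibria, so Theorem~\ref{Thm:LinearSimplex} applies after triangulating that region into simplices if needed (but the point is it is one affine piece). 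On the slab adjacent to $\mathcal{O}$, one designs the second affine piece so that the invariance conditions \eqref{eq:inv} hold at its vertices and the vector field points strictly toward the shared facet (using controllability: $\mathcal{B}$ is not parallel to $\mathcal{O}$, so one can push off $\mathcal{O}$ immediately), again invoking Theorem~\ref{Thm:LinearSimplex} on that piece with target the interface facet.

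The remaining work is to verify that the two affine laws glue into a single piecewise affine feedback solving the original problem: trajectories starting in the $\mathcal{O}$-adjacent piece reach the interface facet, then the first piece carries them to $\mathcal{F}_{0}$, and condition (iii) is arranged by the standard extension of the closed-loop field past $\mathcal{F}_{0}$. One must also handle the degenerate cases — when $\mathcal{S}$ never meets $\mathcal{O}$ (then a single affine feedback of the $(y-x)$ type already works and $\sigma$ is trivial), and when $\mathcal{F}_{0}$ itself lies in $\mathcal{O}$ (then the first piece must be steered to a point $z$ slightly inside, pushed off $\mathcal{O}$ by controllability, before the line-segment argument applies, mirroring the last paragraph of the proof of Theorem~\ref{Thm:NoIntersection}).

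The main obstacle I expect is the gluing/continuity bookkeeping at the interface hyperplane $\mathcal{B}_{z}$: one needs the two affine feedbacks to be genuinely realizable as $u=F_{\sigma}x+g_{\sigma}$ on closed sub-polytopes whose union is $\mathcal{S}$, with the vector field on the interface consistent enough that trajectories actually pass from piece $2$ into piece $1$ (rather than sliding along the interface or being blocked), and that no equilibrium is created on the interface. Establishing the uniform transversality $\beta^{T}f<0$ on the closure of the non-$\mathcal{O}$ piece — which is exactly what emptiness of $\mathcal{A}^{-}$ buys us — is the technical heart that makes this go through, together with a compactness argument bounding the reaching time on each piece.
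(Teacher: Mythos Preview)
Your overall strategy matches the paper's --- extract constraints from Theorem~\ref{Thm:NoIntersection}, partition if necessary, apply Theorem~\ref{Thm:LinearSimplex} on each piece --- but there are two concrete gaps in the execution.

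First, the subdivision: cutting $\mathcal{S}$ along a hyperplane $\mathcal{B}_z$ parallel to $\mathcal{B}$ does not yield two simplices; one side is a prism-like polytope with up to $2n$ vertices, on which you cannot freely prescribe vertex controls and interpolate by a single affine law $u=Fx+g$, so ``it is one affine piece'' is unjustified and Theorem~\ref{Thm:LinearSimplex} does not apply. The paper instead cuts along the hyperplane through the $n-1$ vertices in $\text{vert}(\mathcal{F}_0)\setminus\{w_-\}$ together with a point $v'$ on the edge $[w_-,v_0]$, which produces two genuine simplices. Second, the feedback construction and case analysis: Lemma~\ref{Lem:Line} yields an open-loop control along a single line segment, not an affine feedback on a region, so the ``$(y-x)$-type drift toward a fixed $y$'' does not directly define either piece. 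The paper works vertex-by-vertex, choosing each $u_i$ so that $Av_i+a+Bu_i$ is a positive multiple of $p-v_i$ (or $p-v_0$, or a suitable vector in $\mathcal{B}$ when $v_i\in\mathcal{O}$), thereby verifying the invariance conditions~\eqref{eq:inv} and, crucially, a strict inequality $h_k\cdot(Av_i+a+Bu_i)<0$ at every vertex lying on $\mathcal{O}$ for a common index $k$, which excludes closed-loop equilibria. This single-affine construction succeeds in every case except $\mathcal{F}_0\subset\mathcal{O}$ with $\beta^Tv_0>\beta^Tw_+$; that is the \emph{only} case requiring two pieces, and your diagnosis that ``$v_0$ is the problematic vertex on $\mathcal{O}$'' is off --- in that case condition~(b) of Theorem~\ref{Thm:NoIntersection} actually forces $v_0\notin\mathcal{O}$, and the obstruction is that all of $\mathcal{F}_0$ sits on $\mathcal{O}$.
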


\section{Control Synthesis on Polytopes}
\label{sec:controlpoly}

We now begin our investigation of state feedback synthesis on
polytopes. We want to show that if $\mathcal{P}
\overset{\mathcal{P}}{\longrightarrow} \mathcal{F}$ using
open-loop control then there exists a piecewise affine
feedback solving the reachability problem. The idea is to
triangulate the polytope, transform the reachability problem
within a polytope into a set of reachability problems for
simplices, and then devise appropriate piecewise affine
controllers on each simplex using Proposition~\ref{Prop:ControlforSimplex} of
the previous section. The triangulation must be performed properly
otherwise the procedure may fail. First we present a lemma that aids
in finding a  proper triangulation.
\begin{lemma}
\label{Lem:ExtremePoint}
If $\mathcal{P} \overset{\mathcal{P}}{\longrightarrow} \mathcal{F}$, then there exists a vertex
$v_*$ of $\mathcal{P}$ in $\cP^+$ such that either $v_* \notin \mathcal{O}$ or $v_* \in \mathcal{F}$.
\end{lemma}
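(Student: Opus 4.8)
The plan is to use the characterization of $\reach(\cP,\cF)=\cP\setminus\cA$ from Corollary~\ref{Cor:NonIntersect} together with the description of the failure sets $\cA^-$ and $\cA^+$ in \eqref{eq:FailureSets1}--\eqref{eq:FailureSets2}. Since $\cP\overset{\cP}{\longrightarrow}\cF$, both conditions (a) and (b) of Theorem~\ref{Thm:NoIntersection} hold, so in particular $\cP^+\not\subset\cO\cap\ri{\cH}^+$. I want to produce a \emph{vertex} of $\cP$ lying in $\cP^+$ that fails to lie in $\cO$, or else lies in $\cF$. First I would recall that $\cP^+=\arg\max\{\beta^Tx : x\in\cP\}$ is a (nonempty) face of the polytope $\cP$, hence is itself a polytope whose vertices are among the vertices of $\cP$; thus it suffices to find a vertex of the face $\cP^+$ with the desired property, because any such vertex is automatically a vertex of $\cP$.

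The argument then splits on whether $\cP^+\subset\cO$. \textbf{Case 1: $\cP^+\not\subset\cO$.} Then there is some point of $\cP^+$ not in $\cO$. Since $\cO$ is an affine space (Assumption (A2) and the remark that $(A,B)$ controllable makes $\cO$ an affine space), the set $\cP^+\cap\cO$ is the intersection of the polytope $\cP^+$ with an affine subspace not containing all of $\cP^+$, so it is a proper face-like subset; in particular at least one vertex of $\cP^+$ lies outside $\cO$. That vertex $v_*$ is a vertex of $\cP$, lies in $\cP^+$, and satisfies $v_*\notin\cO$, which is exactly what we want. \textbf{Case 2: $\cP^+\subset\cO$.} Here I use condition (b): since $\cP^+\subset\cO$ but $\cP^+\not\subset\cO\cap\ri{\cH}^+$, we must have $\cP^+\not\subset\ri{\cH}^+$, i.e. there is a point $z\in\cP^+$ with $\beta^Tz\le\beta^Tv^+$. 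But by definition of $\cP^+$ every point of $\cP$ (hence of $\cF$, and in particular $v^+$) has $\beta$-component at most that of $\cP^+$, so $\beta^Tz\ge\beta^Tv^+$. Therefore $\beta^Tz=\beta^Tv^+$, which forces $\cP^+=\cB_{v^+}\cap\cP\supseteq\cB_{v^+}\cap\cF\ni v^+$, so $v^+\in\cP^+$. Now $\cP^+\cap\cF$ is a nonempty polytope (it is a face of $\cP$ intersected with the polytope $\cF$, both living in the hyperplane $\cB_{v^+}$), so it has a vertex; and any vertex of a face of $\cP$ is a vertex of $\cP$. Hence there is a vertex $v_*$ of $\cP$ with $v_*\in\cP^+$ and $v_*\in\cF$.

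In both cases we have exhibited a vertex $v_*$ of $\cP$ in $\cP^+$ with $v_*\notin\cO$ or $v_*\in\cF$, proving the lemma. The one point that needs a little care, and which I expect is the main obstacle, is justifying that a vertex of the face $\cP^+$ (respectively of $\cP^+\cap\cF$) is genuinely a vertex of $\cP$ and that the relevant intersections are nonempty polytopes with vertices; this is standard polytope theory (a face of a polytope is a polytope and its vertices are vertices of the ambient polytope, and $\cP^+\cap\cF\subseteq\cF$ inherits a vertex since it is a nonempty polytope), but it should be stated explicitly. The rest is a direct unwinding of the definitions of $\cP^+$, $v^+$, $\cH^+$, and the hypotheses of Theorem~\ref{Thm:NoIntersection}.
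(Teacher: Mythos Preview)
Your proposal is correct and uses essentially the same ingredients as the paper's proof: condition~(b) of Theorem~\ref{Thm:NoIntersection}, the convexity of $\cO$, and the fact that $\cP^+$ is a face of $\cP$ whose vertices are vertices of $\cP$. The only difference is organizational: the paper argues by contradiction (assume every vertex of $\cP^+$ lies in $\cO\setminus\cF$, conclude by convexity that $\cP^+\subset\cO\cap\ri{\cH}^+$, contradicting (b)), whereas you unfold the same logic as an explicit case split. One small point to tighten in your Case~2: to conclude that a vertex of $\cP^+\cap\cF$ is a vertex of $\cP$, use that in this section $\cF$ is a facet of $\cP$, so $\cP^+\cap\cF$ is the intersection of two faces and hence itself a face of $\cP$.
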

\begin{proof}
Suppose by contradiction that for any  vertex  $v \in \cP^+$ we have
$v \in \cO$ and $v \notin \cF$. Note that $v \notin \mathcal{F}$ for all
vertices $v \in \cP^+$ implies, by convexity,
$\cP^+ \subset \ri{\cH}^+$. Moreover, since $v \in \mathcal{O}$ for all $v \in \cP^+$,
it follows from the convexity of $\mathcal{O}$
that $\cP^+ \subset \cO$.  Hence, by Theorem~\ref{Thm:NoIntersection}, this contradicts $\mathcal{P}
\overset{\mathcal{P}}{\longrightarrow} \mathcal{F}$.
\end{proof}

We review some concepts on triangulation \cite{JacJos97}.
Suppose $\mathcal{V}$ is a finite set of points such that $\text{conv}(\mathcal{V})$ is $n$-dimensional.
A {\em subdivision} of $\mathcal{V}$ is a finite collection $\PP=\{\mathcal{P}_1, \dots, \mathcal{P}_m\}$ of
$n$-dimensional polytopes such that the vertices of each $\mathcal{P}_i$ are drawn from $\mathcal{V}$;
$\text{conv}(\mathcal{V})$ is the union of
$\mathcal{P}_1, \dots, \mathcal{P}_m$; and $\mathcal{P}_i \cap \mathcal{P}_j$ ($ i\neq j$)
is a common (possibly empty) face of  $\mathcal{P}_i$ and $\mathcal{P}_j$.
A {\em triangulation} of $\mathcal{V}$ is a subdivision in which each $\mathcal{P}_i$ is a simplex.
In the following we assume that $\cF$ is a facet of $\cP$.

{\bf Basic Triangulation of $\cP$:} \label{triangle1} \
\begin{enumerate}
\item
Select $v_* $ as in Lemma~\ref{Lem:ExtremePoint}.
\item
Triangulate each facet $\mathcal{F}_j$ of $\mathcal{P}$.
Denote $\{\mathcal{S}^i_{\mathcal{F}_j}: i =1, \dots, k_j\}$ the triangulation for $\mathcal{F}_j$.
\item
Let $\mathbb{S} =\{\mathcal{S}_1, \dots, \mathcal{S}_q\} := \{\text{conv}(v_*, \mathcal{S}^i_{\mathcal{F}_j}) : \mathcal{F}_j \text{ is any facet of $\mathcal{P}$ not containing } v_* \}$.
\end{enumerate}

\begin{lemma}
\label{Lem:Triangulation}
The collection $\mathbb{S}$ is a triangulation of $\text{vert}(\mathcal{P}) \cup \text{vert}(\mathcal{F})$ such
that every simplex in $\mathbb{S}$ contains $v_*$ as a vertex.
\end{lemma}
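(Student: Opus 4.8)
The plan is to verify directly that the collection $\mathbb{S}$ produced by the Basic Triangulation satisfies the three defining properties of a triangulation of $\mathcal{V} := \text{vert}(\mathcal{P}) \cup \text{vert}(\mathcal{F})$, namely: (i) each element of $\mathbb{S}$ is an $n$-simplex whose vertices lie in $\mathcal{V}$; (ii) the union of the elements of $\mathbb{S}$ is $\mathcal{P} = \text{conv}(\mathcal{V})$; and (iii) any two distinct elements meet in a common (possibly empty) face. Along the way the claim that $v_*$ is a vertex of every simplex is immediate from Step~(3) of the construction, since each $\mathcal{S}_\ell$ is of the form $\text{conv}(v_*, \mathcal{S}^i_{\mathcal{F}_j})$ and $v_*$ is not in $\mathcal{F}_j$ (as $\mathcal{F}_j$ ranges only over facets not containing $v_*$), so $v_*$ together with the $n-1$ affinely independent vertices of $\mathcal{S}^i_{\mathcal{F}_j}$ form $n$ affinely independent points, hence an $n$-simplex. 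This also disposes of (i), using that the vertices of each $\mathcal{S}^i_{\mathcal{F}_j}$ are drawn from $\text{vert}(\mathcal{F}_j) \subseteq \mathcal{V}$ (with the convention that when $\mathcal{F}_j = \mathcal{F}$ we use $\text{vert}(\mathcal{F})$) and $v_* \in \text{vert}(\mathcal{P}) \subseteq \mathcal{V}$.

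For (ii), I would use the standard fact that a polytope is the cone (join) from any of its vertices over the union of its facets not containing that vertex: $\mathcal{P} = \bigcup \{ \text{conv}(v_*, \mathcal{F}_j) : v_* \notin \mathcal{F}_j \}$. Since each such $\mathcal{F}_j$ is exactly the union of its triangulation pieces $\mathcal{S}^i_{\mathcal{F}_j}$, distributing the join over the union gives $\mathcal{P} = \bigcup_{j,i} \text{conv}(v_*, \mathcal{S}^i_{\mathcal{F}_j})$, which is precisely $\bigcup \mathbb{S}$. One should justify the "cone over the facets" identity: every point $x \in \mathcal{P}$, $x \neq v_*$, determines a ray from $v_*$ through $x$ which exits $\mathcal{P}$ through some boundary point lying on a facet; that facet cannot contain $v_*$ (else the whole segment, hence the exit point, would be forced to lie in that facet's supporting hyperplane together with $v_*$, a contradiction unless $x$ itself is on a facet containing $v_*$, in which case $x$ is handled by the facets adjacent to it as well — a short case check).

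For (iii), the intersection property, I would argue by cases according to how two simplices $\mathcal{S}_\ell = \text{conv}(v_*, \mathcal{S}^i_{\mathcal{F}_j})$ and $\mathcal{S}_{\ell'} = \text{conv}(v_*, \mathcal{S}^{i'}_{\mathcal{F}_{j'}})$ are situated. If $j = j'$, then $\mathcal{S}^i_{\mathcal{F}_j}$ and $\mathcal{S}^{i'}_{\mathcal{F}_j}$ are two simplices of the triangulation of the single facet $\mathcal{F}_j$, so their intersection is a common face $G$ of both (possibly empty), and coning from $v_*$ preserves this: $\mathcal{S}_\ell \cap \mathcal{S}_{\ell'} = \text{conv}(v_*, G)$, a common face of $\mathcal{S}_\ell$ and $\mathcal{S}_{\ell'}$. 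If $j \neq j'$, then $\mathcal{F}_j \cap \mathcal{F}_{j'}$ is a common proper face of $\mathcal{P}$; intersecting the two cones forces the $v_*$-components to agree and the base components to lie in $\mathcal{F}_j \cap \mathcal{F}_{j'}$, whence $\mathcal{S}_\ell \cap \mathcal{S}_{\ell'} = \text{conv}(v_*, \mathcal{S}^i_{\mathcal{F}_j} \cap \mathcal{F}_{j'} \cap \mathcal{S}^{i'}_{\mathcal{F}_{j'}})$, and one checks this base intersection is a common face of $\mathcal{S}^i_{\mathcal{F}_j}$ and of $\mathcal{S}^{i'}_{\mathcal{F}_{j'}}$ because the facet triangulations restrict compatibly to shared subfacets — here one needs the mild hypothesis, implicit in Step~(2), that the triangulations of the various facets of $\mathcal{P}$ agree on their common subfacets (or, more simply, one may take all facet triangulations to be induced by a single triangulation of the full vertex set, e.g. a pulling triangulation at a fixed vertex order).

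The main obstacle is this last compatibility point in case (iii): the bare statement "triangulate each facet $\mathcal{F}_j$" does not by itself guarantee that the triangulations of two adjacent facets induce the same triangulation on the ridge they share, and without that the pieces $\mathcal{S}_\ell$ need not fit together along a common face. I expect the cleanest fix is to specify in Step~(2) that the facet triangulations are chosen coherently — for instance, fix a total order on $\text{vert}(\mathcal{P}) \cup \text{vert}(\mathcal{F})$ and use the pulling (placing) triangulation with respect to that order on each facet, which is automatically consistent on all faces — and then the argument above goes through verbatim. Everything else (the simplex count, non-degeneracy, the covering identity) is routine convex geometry.
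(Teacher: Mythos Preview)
Your approach is essentially the same as the paper's: the paper also argues that each $\mathcal{S}_i$ contains $v_*$ by construction, that the vertices are drawn from $\text{vert}(\mathcal{P}) \cup \text{vert}(\mathcal{F})$, and proves the covering property by exactly the ray argument you sketch (draw the line from $v_*$ through $x$, hit a facet $\mathcal{F}_j$ not containing $v_*$ at a point $y$, and locate $y$ in some $\mathcal{S}^i_{\mathcal{F}_j}$).

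The one place you go further than the paper is on the intersection property (iii). The paper simply asserts that $\mathcal{S}_i \cap \mathcal{S}_j$ is a common face ``by construction,'' whereas you correctly unpack the two cases $j=j'$ and $j\neq j'$ and flag the compatibility issue: independently chosen facet triangulations need not agree on the ridges they share. Your proposed fix (e.g.\ use a pulling triangulation with respect to a fixed vertex order, so that all facet triangulations are automatically coherent on common subfaces) is standard and exactly what is needed; the paper leaves this implicit. So your argument is the same in spirit but more complete on this point.
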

\proof
By construction, it is clear that every simplex $\mathcal{S}_i \in \mathbb{S}$ contains $v_*$ as a vertex, the vertices of $\mathcal{S}_i$ are drawn from $\text{vert}(\mathcal{P}) \cup \text{vert}(\mathcal{F})$, and $\mathcal{S}_i \cap \mathcal{S}_j$ ($i\neq j$) is a common (possibly empty) face of $\mathcal{S}_i$ and $\mathcal{S}_j$. Next, we show that $\mathcal{P}$ is the union of $\mathcal{S}_1, \dots, \mathcal{S}_q$. Let $x$ be a point in the union of $\mathcal{S}_1, \dots, \mathcal{S}_q$. Then it must be in a simplex $\mathcal{S}_i$. Thus, by convexity of $\mathcal{P}$, $x \in \mathcal{P}$. On the other hand, let $x$ be a point in $\mathcal{P}$. Draw a line through $v_*$ and $x$. It intersects at a point $y$ with a facet (say $\mathcal{F}_j$) of $\mathcal{P}$ that does not contain $v_*$. It means there exists
a simplex $\mathcal{S}^i_{\mathcal{F}_j}$ containing $y$. So $x \in \text{conv}(v_*, \mathcal{S}^i_{\mathcal{F}_j})$, one of the
simplices in $\mathbb{S}$. The conclusion follows.
\endproof

Now suppose we have a triangulation $\mathbb{S}=\{\mathcal{S}_1 ,
\dots, \mathcal{S}_q\}$ as above, and denote $\cS_0 :=
\mathcal{F}$. We say $\mathcal{S}_i$ and $ \mathcal{S}_j$ are {\em
adjacent} (denoted by $\mathcal{S}_i \sim \mathcal{S}_j$) if
$\cF_{ij} := \mathcal{S}_i \cap \mathcal{S}_j$ is a facet. A
sequence $(\mathcal{S}_{i_k}, \dots, \mathcal{S}_{i_0})$ is called
a {\em path} to reach $\mathcal{S}_{i_0}$ if $\mathcal{S}_{i_j}
\sim \mathcal{S}_{i_{j-1}}$ and $\mathcal{S}_{i_j}
\overset{\mathcal{S}_{i_j}}{\longrightarrow}
\mathcal{S}_{i_{j-1}}$ for each $1 \leq j\leq k$. The {\em length}
of such a path is $k$. We propose a greedy algorithm that orders
simplices according to minimum $\beta$ component of exit vertices
first. More precisely, at every iteration a pair
$(\mathcal{S}_i, \mathcal{S}_j)$ is selected that minimizes the
$\beta$-component of any vertex on the exit facet $\cF_{ij}$. If
there is more than one pair achieving the minimum, select a pair
which has the maximum number of exit vertices achieving the
minimum. In the algorithm below $\mathcal{R}_f$ and
$\mathcal{R}_u$ denote the finished and unfinished set of
simplices, respectively, and let
\[
w' \in \arg \min \biggl\{\beta ^T x ~:~ x \in 
\bigcup \left\{ \cS_k \cap \cS_l ~:~ (\cS_k, \cS_l) \in \cR_u \times \cR_f \text{ satisfying } \cS_k
\sim \cS_l \right\} \biggr\} \,.
\]

{\bf Greedy algorithm for path generation in $\mathbb{S}$:}
\begin{enumerate}
\item Initialization: $\mathcal{R}_{f}:=\{\mathcal{S}_0\}$,
$\mathcal{R}_{u}:=\{\mathcal{S}_1, \dots, \mathcal{S}_q\}$; \item
While $(\cR_{u} \neq \emptyset)$, choose $(\cS_i, \cS_j) \in \cR_u
\times \cR_f$ such that $\cS_i  \sim \cS_j$, it achieves   $\min
\limits_{x \in \cF_{ij}} \beta ^T x  =\beta ^T w'$, and
$\mathcal{F}_{ij}$ contains the maximum number of vertices in
$\mathcal{B}_{w'}$. Then move $\cS_i$ from $\cR_u$ to $\cR_f$.
\end{enumerate}
Once the greedy algorithm has generated paths, the synthesis of a piecewise affine control is straightforward.
See also \cite{HVS06}.

{\bf Piecewise affine synthesis:}
\begin{enumerate}
\item
Let $\{\dots,(\dots, \mathcal{S}_i, \mathcal{S}_j, \dots, \mathcal{S}_0),  \dots\}$ be a collection of paths
to reach $\mathcal{S}_0$;
\item
Find $u^i(x):=F_{\sigma_i(x)}x+g_{\sigma_i(x)}, \ i=1, \dots, q$,
that solves $\mathcal{S}_i \overset{\mathcal{S}_i}{\longrightarrow} \mathcal{F}_{ij}$,
where $\mathcal{F}_{ij}$ is the common facet of $\mathcal{S}_i$ and the next simplex in the path;
\item
For all $x \in \mathcal{S}_i \in \mathbb{S}$, let $u(x)=u^i(x)$.
If $x \in \mathcal{P}$ belongs to more than one simplex, set $u(x)=u^j(x)$ where $j$ is the index of
a simplex that has the shortest path to reach $\mathcal{S}_0$.
\end{enumerate}

\begin{theorem}
\label{Thm:ControlforPolytope}
Suppose that $\cF$ is a facet of $\cP$.
There exists a piecewise affine state feedback that achieves
$\mathcal{P} \overset{\mathcal{P}}{\longrightarrow} \mathcal{F}$
if and only if $\mathcal{P} \overset{\mathcal{P}}{\longrightarrow} \mathcal{F}$ using open-loop control.
\end{theorem}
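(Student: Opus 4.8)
The ``only if'' direction is immediate: if a piecewise affine feedback $u(\cdot)$ achieves $\mathcal{P} \overset{\mathcal{P}}{\longrightarrow} \mathcal{F}$, then for each $x_0 \in \cP$ the closed-loop trajectory crosses finitely many affine pieces, so $t \mapsto u(\phi^{u(\cdot)}_t(x_0))$ is a piecewise continuous open-loop control that steers $x_0$ into $\cF$ within $\cP$; hence $x_0 \overset{\cP}{\longrightarrow} \cF$ by open-loop control. The substance is the converse, so assume $\mathcal{P} \overset{\mathcal{P}}{\longrightarrow} \mathcal{F}$ by open-loop control; then conditions (a) and (b) of Theorem~\ref{Thm:NoIntersection} hold and, by Proposition~\ref{Thm:NS} and Corollary~\ref{Cor:NonIntersect}, no $\cP$-invariant set lies in $\cP \setminus \cF$. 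The plan is to (1) pick the vertex $v_*$ of Lemma~\ref{Lem:ExtremePoint} and form the Basic Triangulation $\mathbb{S}$, which by Lemma~\ref{Lem:Triangulation} is a triangulation of $\text{vert}(\cP)\cup\text{vert}(\cF)$ in which every simplex has $v_*$ as a vertex; (2) run the greedy path-generation algorithm on $\mathbb{S}\cup\{\cS_0\}$, $\cS_0=\cF$; and (3) apply the piecewise affine synthesis, invoking Theorem~\ref{Prop:ControlforSimplex} on each simplex. The theorem then follows from two claims: first, the greedy algorithm terminates with $\cR_u=\emptyset$ and for every pair $(\cS_i,\cS_j)$ it selects one has $\cS_i \overset{\cS_i}{\longrightarrow} \cF_{ij}$; second, the glued feedback drives every $x_0\in\cP$ into $\cF$ in finite time without leaving $\cP$.

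For the first claim I would argue as follows. The adjacency graph of $\mathbb{S}\cup\{\cS_0\}$ is connected (all simplices share $v_*$, the facet triangulations are connected, and $\cS_0$ is adjacent to every cone $\conv(v_*,\cS^i_\cF)$ through $v_*$), so whenever $\cR_u\neq\emptyset$ there is an admissible adjacent pair and $w'$ is well defined; the real content is that the greedily chosen $\cS_i$ can be controlled to its selected facet $\cF_{ij}$. Here I would apply Theorem~\ref{Thm:NoIntersection} to the sub-problem on the simplex $\cS_i$ with target facet $\cF_{ij}$, verifying conditions (a) and (b) there: because $\cF_{ij}$ is chosen to minimize $\beta$ over boundary facets and to contain a maximal number of vertices in $\cB_{w'}$, and because $\beta^T x$ is nonincreasing along every trajectory in $\cP$ while the $(n-1)$ directions in $\cB$ are freely assignable at each vertex, the low-$\beta$ failure set $\cH^-$ of $\cS_i$ collapses into $\cF_{ij}\cup\cB^-$ (giving (a)), and the high-$\beta$ face $\cS_i^+$ either is not contained in $\cO$ or meets $\cF_{ij}$ — this is where $v_*\in\cP^+$ and $v_*\notin\cO$ (or $v_*\in\cF$) from Lemma~\ref{Lem:ExtremePoint}, together with $\ri{\mathcal{P}}\cap\cO=\emptyset$ and controllability of $(A,B)$, are used — giving (b). Thus $\cS_i \overset{\cS_i}{\longrightarrow} \cF_{ij}$ holds, and Theorem~\ref{Prop:ControlforSimplex} supplies a piecewise affine feedback $u^i$ on $\cS_i$ realizing it. (Equivalently, one could verify the invariance conditions \eqref{eq:inv} and the absence of a closed-loop equilibrium in $\cS_i$ as in Theorem~\ref{Thm:LinearSimplex}, and if an affine feedback does not suffice, pass to Theorem~\ref{Prop:ControlforSimplex}.) Since a simplex leaves $\cR_u$ only once such a pair is produced and $\mathbb{S}$ is finite, the algorithm terminates and every $\cS_i$ receives a finite-length path to $\cS_0$; alternatively, were the algorithm to stall, $\bigcup\cR_u$ would be a $\cP$-invariant subset of $\cP\setminus\cF$, contradicting open-loop solvability.

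For the second claim, along each generated path the simplices have strictly decreasing path-length to $\cS_0$, so under the synthesized feedback a trajectory starting in $\cS_i$ traverses a strictly decreasing, hence finite, sequence of simplices, spending finite time in each by the exit property of Problem~\ref{Prbm:CTF}; therefore it reaches $\cS_0=\cF$ in finite time while remaining in $\cP$. On a shared facet $\cF_{ij}$ the ``shortest path'' tie-break assigns the feedback of the simplex closer to $\cS_0$, and the extended-dynamics condition (iii) of Problem~\ref{Prbm:CTF} (together with the invariance conditions for that simplex on its other facets) guarantees the trajectory is pushed into that simplex rather than sliding along the facet or cycling. The resulting map is piecewise affine by construction, completing the proof.

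I expect the main obstacle to be establishing that the greedily selected pair always admits a control-to-facet solution — i.e., that the $\beta$-minimal exit facet is simultaneously compatible with the invariance conditions on $\cS_i$ and never forces a closed-loop equilibrium on $\cS_i$. It is precisely at this point that all the hypotheses must be combined at once: $\rk(B)=n-1$ and controllability (maneuverability within $\cB$ and that $\cB$ is transverse to $\cO$), $\ri{\mathcal{P}}\cap\cO=\emptyset$, the special choice of $v_*$ from Lemma~\ref{Lem:ExtremePoint}, and the open-loop solvability conditions of Theorem~\ref{Thm:NoIntersection} that descend from $\cP$ to each $\cS_i$ via the $\beta$-monotonicity and the greedy ordering.
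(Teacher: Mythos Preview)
Your outline matches the paper's approach: build the Basic Triangulation around $v_*$, run the greedy algorithm, and verify conditions (a) and (b) of Theorem~\ref{Thm:NoIntersection} for each selected pair $(\cS_i,\cS_j)$. Condition (b) is essentially right; the paper phrases it more cleanly by noting that $v_*$ lies in \emph{every} common facet $\cF_{ij}$ (being a vertex of every simplex by Lemma~\ref{Lem:Triangulation}) and also in $\cS_i^+$, so $\cS_i^+\not\subset\ri{\cH}^+$ automatically and (b) holds for \emph{any} adjacent pair, not just the greedily selected one.

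The genuine gap is in condition (a). You assert that the greedy choice makes the low-$\beta$ failure set of $\cS_i$ collapse into $\cF_{ij}\cup\cB^-$, but you never argue why the unique vertex $v_0$ of $\cS_i$ \emph{not} on $\cF_{ij}$ must satisfy $\beta^T v_0 \ge \beta^T w'$; this is the crux, and it is precisely where both the $\beta$-minimization and the ``maximum number of vertices in $\cB_{w'}$'' tie-break are actually used. The paper proves it by a three-case split on the sign of $\beta^T v_0 - \beta^T w$ for $w\in\cF_{ij}\cap\cB_{w'}$: if positive, (a) is immediate; if negative, the segment from $v^-\in\cS_0\in\cR_f$ to $v_0\in\cR_u$ must cross some $\cR_u/\cR_f$ boundary at a point with $\beta$-value strictly below $\beta^T w'$, contradicting minimality; if zero, either $\cB_{w'}\cap\cP\subset\cO$ (so (a) holds), or the face $\conv\{v_0,v_1,\dots,v_k\}$ (with $v_1,\dots,v_k$ the vertices of $\cF_{ij}$ in $\cB_{w'}$) already lies in some $\cS_{k}\in\cR_f$, producing a competing facet $\cF_{i'j'}$ with one \emph{more} vertex in $\cB_{w'}$ than $\cF_{ij}$, contradicting the tie-break. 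Your remark about ``maneuverability in $\cB$'' does not substitute for this case analysis. Finally, your fallback --- that a stalled algorithm would make $\bigcup\cR_u$ a $\cP$-invariant subset of $\cP\setminus\cF$ --- is not correct: failure of the simplex-level control-to-facet problem for every boundary pair does not prevent an open-loop trajectory from crossing from $\cR_u$ into $\cR_f$, so $\cP$-invariance in the sense of Definition~\ref{Def:InvSet} does not follow, and this alternative should be dropped.
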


The idea of the proof is to show that the path generation algorithm does not terminate until $\cR_u = \emptyset$
by showing that for the next selected pair $(\mathcal{S}_i, \mathcal{S}_j) \in \mathcal{R}_{u} \times \mathcal{R}_{f}$,
the reachability problem $\mathcal{S}_i  \overset{\mathcal{S}_i}{\longrightarrow} \mathcal{S}_j$ can be solved.
This is done by applying Theorem~\ref{Thm:NoIntersection} and verifying conditions (a) and (b) for the selected pair
$(\mathcal{S}_i, \mathcal{S}_j) \in \mathcal{R}_{u} \times \mathcal{R}_{f}$. The main effect of our selection of triangulation
based on vertex $v_\ast$ is that condition (b) holds trivially for any such pair. The fact that condition (a) can be made to
hold is the main feature of the greedy strategy with respect to $\beta$. This strategy
guarantees that the vertex $v_0 \in \cS_i$ not contained in the exit facet has a strictly larger
$\beta$-component, and this means that failure set $\cA^- = \emptyset$ for $\cS_i$. The proof now easily follows from these
observations.

\proof
($\Longrightarrow$) Obvious. \
($\Longleftarrow$)
If the path generation algorithm terminates with $\mathcal{R}_{u} = \emptyset$ then by straightforward dynamic programming
arguments there exists a piecewise affine feedback control that achieves
$\mathcal{P} \overset{\mathcal{P}}{\longrightarrow} \mathcal{F}$. It is therefore sufficient to show that if
$\mathcal{R}_{u} \neq \emptyset$, there exists a pair
$(\mathcal{S}_i, \mathcal{S}_j) \in \mathcal{R}_{u} \times \mathcal{R}_{f}$ such that
$\mathcal{S}_i \cap \mathcal{S}_j =: \mathcal{F}_{ij}$ is a facet and
$\mathcal{S}_i  \overset{\mathcal{S}_i}{\longrightarrow} \mathcal{S}_j$.

Consider any pair $(\cS_i, \cS_j) \in \mathcal{R}_{u} \times \mathcal{R}_{f}$. We must verify conditions (a)
and (b) of Theorem~\ref{Thm:NoIntersection} to show $\mathcal{S}_i  \overset{\mathcal{S}_i}{\longrightarrow} \mathcal{F}_{ij}$.
Consider condition (b). We have two observations about $v_\ast$. First,
from Lemma~\ref{Lem:Triangulation}, $v_\ast \in \mathcal{S}_i$, $\forall i$, and
therefore $v_\ast \in \mathcal{F}_{ij}$. Second,
$v_\ast \in \cP^+$ implies $v_\ast \in \cS_i^+$.
Applying these two facts, condition (b) for
$\mathcal{S}_i  \overset{\mathcal{S}_i}{\longrightarrow} \mathcal{S}_j$ says that
$\cS_i^+ \not\subset \cO \cap \{ x \in \cS_i: \beta^T x > \beta^T v_\ast \}$, and this is obviously true.

So far we have shown that for any pair $(\mathcal{S}_i,
\mathcal{S}_j) \in \mathcal{R}_{u} \times \mathcal{R}_{f}$ as
above, condition (b) of Theorem~\ref{Thm:NoIntersection} holds for
the problem $\mathcal{S}_i
\overset{\mathcal{S}_i}{\longrightarrow} \mathcal{F}_{ij}$. Now we
will show that for the selected pair $(\mathcal{S}_i,
\mathcal{S}_j)$, condition (a) holds. Let $v_0$ be the vertex of
$\mathcal{S}_i$ not in $\mathcal{F}_{ij}$. Let $w \in \cF_{ij}
\cap  \cB_{w'}$. There are three cases.
\begin{enumerate}
\item Suppose $\beta^T w < \beta^T v_0$. Then condition (a) holds.
\item Suppose $\beta^T w > \beta^T v_0$. Also, we know $\beta^T
v^- \le \beta^T v_0 < \beta^T w$ from the assumption $\cP
\overset{\cP}{\longrightarrow} \cF$. By convexity, for every point
$y$ on the line segment joining $v^-$ and $v_0$, $\beta^T y <
\beta^T w$. However, $v^- \in \mathcal{S}_0 \in \mathcal{R}_{f}$
and $v_0 \in \mathcal{S}_i \in \mathcal{R}_u$, which means the
line segment contains a point $y$ on the boundary of
$\mathcal{S}_{i'} \in \mathcal{R}_u$ and $\mathcal{S}_{j'} \in
\mathcal{R}_f$. This contradicts the choice of the pair
$(\mathcal{S}_{i}, \mathcal{S}_{j})$ that achieves $\min
\limits_{x \in \cF_{ij}} \beta ^T x  =\beta ^T w'$. \item Suppose
$\beta^T w = \beta^T v_0$. Let $\{ v_1,\ldots,v_k \}$ be the set
of vertices of $\mathcal{F}_{ij}$ that lie in $\mathcal{B}_{w'}$.
If $\cB_{w'} \cap \cP \subset \mathcal{O}$ then condition (a)
holds and we are done. If not, it follows from the assumption
$\mathcal{P} \overset{\mathcal{P}}{\longrightarrow} \mathcal{F}$
that either $\cB_{w'} \cap \cP \subset \mathcal{F}$ or $\beta^T w'
> \beta^T v^-$. For both cases we claim that
$\mathcal{G}:=\text{conv}\{ v_0, v_1, \ldots, v_k \}$ belongs to
some $\mathcal{S}_k \in \mathcal{R}_f$. For the former case, it is
obvious since $\mathcal{G} \subset \cB_{w'} \cap \cP \subset \cS_0
\in \mathcal{R}_f$. For the latter case, suppose not. Say a point
$x \in \mathcal{G}$ does not belong to some $\mathcal{S}_k \in
\mathcal{R}_f$. Then since the union of sets in $\mathcal{R}_f$ is
a closed set, there exists a point $y \in \mathcal{P}$ near $x$
satisfying $\beta^T y < \beta^T w'$, and $y$ also does not belong
to some $\mathcal{S}_k \in \mathcal{R}_f$. This contradicts the
choice of the pair $(\mathcal{S}_{i}, \mathcal{S}_{j})$ that
achieves $\min \limits_{x \in \cF_{ij}} \beta ^T x  =\beta ^T w'$.
Therefore $\cG$ belongs to some $\cS_k \in \cR_f$ which implies it
belongs to some facet $\mathcal{F}_{i'j'} \neq \mathcal{F}_{ij}$
with $\mathcal{F}_{i'j'} = \mathcal{S}_{i'} \cap
\mathcal{S}_{j'}$, where $\mathcal{S}_{i'} \in \mathcal{R}_u$,
$\mathcal{S}_{j'} \in \mathcal{R}_f$, and $\mathcal{F}_{i'j'}$ has
one more vertex, namely $v_0$, in $\cB_{w'}$. This contradicts the
choice of $\mathcal{F}_{ij}$.
\end{enumerate}
\endproof

\begin{example}
Consider again Example~\ref{example1}. After applying
Algorithm~\ref{Alg:PracticalCut} to cut the failure sets off, we
know from Corollary~\ref{Cor:PracticalCut} that
$\text{Reach}_\epsilon(\cP,\cF)
\overset{\text{Reach}_\epsilon(\cP,\cF)} {\longrightarrow}
\mathcal{F}$ using open-loop control. We want to find a piecewise
affine state feedback that achieves
$\text{Reach}_\epsilon(\cP,\cF)
\overset{\text{Reach}_\epsilon(\cP,\cF)} {\longrightarrow}
\mathcal{F}$.
\begin{figure}[!t]
\begin{center}
\psfrag{v1}{$v_1$} \psfrag{v2}{$v_2$} \psfrag{v3}{$v_3$}
\psfrag{s1}{$\mathcal{S}_1$} \psfrag{s2}{$\mathcal{S}_2$}
\psfrag{s3}{$\mathcal{S}_3$} \psfrag{v4}{$v_4(v_-)$}
\psfrag{v5}{$v_5(v_+)$} \psfrag{x1}{$x_1$} \psfrag{x2}{$x_2$}
\psfrag{f}{$\mathcal{F}$} \psfrag{o}{$\mathcal{O}$}
\psfrag{b}{$\mathcal{B}$} \psfrag{beta}{$\beta$}
\psfrag{ppp}{$\text{Reach}_\epsilon(\cP,\cF)$}
\includegraphics[width= 0.6 \linewidth]{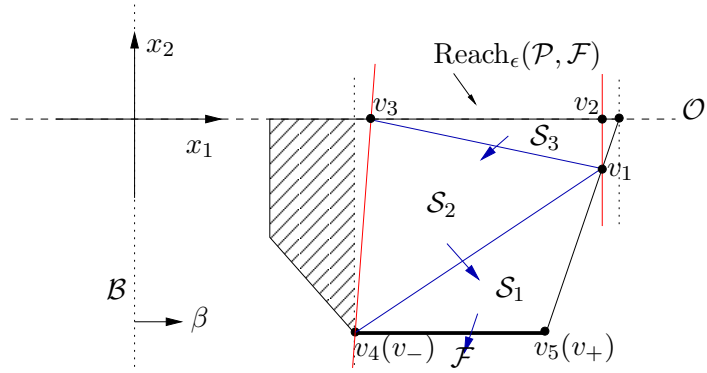}
\caption{A triangulation of $\text{Reach}_\epsilon(\cP,\cF)$ and a
path to reach $\mathcal{F}$. \label{Fig:Ex2}}
\end{center}
\end{figure}
Denote the vertices of $\text{Reach}_\epsilon(\cP,\cF)$ by $v_1,
\dots, v_5$ in Figure~\ref{Fig:Ex2}. It can be easily obtained
that $v_1$ is the only vertex of $\text{Reach}_\epsilon(\cP,\cF)$
satisfying the property of Lemma~\ref{Lem:ExtremePoint} so
$v_*=v_1$. Also, $v_* \not \in \mathcal{F}$. By the proposed
triangulation method, we obtain a triangulation
$\mathcal{S}=\{\mathcal{S}_1, \mathcal{S}_2, \mathcal{S}_3\}$ as
shown in Figure~\ref{Fig:Ex2}. By applying
Theorem~\ref{Thm:NoIntersection} to each simplex, it can be easily
checked that $\mathcal{S}_3
\overset{\mathcal{S}_3}{\longrightarrow} \mathcal{S}_2$,
$\mathcal{S}_2 \overset{\mathcal{S}_2}{\longrightarrow}
\mathcal{S}_1$, and $\mathcal{S}_1
\overset{\mathcal{S}_1}{\longrightarrow} \mathcal{F}$. Thus, we
can find a control to solve the reachability problem on each
simplex (based on Proposition~\ref{Prop:ControlforSimplex}) and
then we can construct a piecewise affine control which achieves
$\text{Reach}_\epsilon(\cP,\cF)
\overset{\text{Reach}_\epsilon(\cP,\cF)} {\longrightarrow}
\mathcal{F}$.
\end{example}

\section{Triangulation with respect to $\cF$}
\label{sec:trianglewrtF}

In this section we study how the previous results can be extended to solve the control synthesis problem if $\cF$
is not given as a facet of $\cP$. If
the designer has flexibility in modifying the given state constraints, then one
perform a slight modification (by pulling out $\cF$) so that $\cF$ is a facet of a larger
polytope $\cP'$. However, this approach has two caveats:
(1) The problem $\cP' \overset{\cP'}{\longrightarrow} \cF'$ may not be solvable even if
$\cP \overset{\cP}{\longrightarrow} \cF$ is; (2) If $\cP$ is part of a larger subdivision of the state space, then
possibly other polytopes in the subdivision must be modified.
A more desirable procedure is to use a triangulation method that refines the given subdivision of the
state space by splitting $\cP$ so that $\cF$ becomes a facet of one of the polytopes in the refined subdivision.
This approach also has pitfalls, because if one does not refine the subdivision properly, failure sets may emerge even if
$\cP \overset{\cP}{\longrightarrow} \cF$ by open-loop control. In this section we show one method (among several)
to obtain a proper triangulation.

Let $\bar{\cF}$ denote the facet of $\cP$ containing $\cF$. First we consider a simple case when $v_{\ast}$ of
Lemma~\ref{Lem:Triangulation} can be selected so that $v_{\ast} \not\in \bar{\cF}$. See Figure~\ref{Fig:Ill1}.
\begin{figure}[!t]
\begin{center}
\psfrag{v5s}{$v_5(v_*)$}
\psfrag{v3}{$v_3$}
\psfrag{v6}{$v_6$}
\psfrag{v5}{$v_5$}
\psfrag{v2}{$v_2$}
\psfrag{v4}{$v_4$}
\psfrag{v1}{$v_1$}
\psfrag{vs}{$v_3(v_*)$}
\psfrag{f}{$\mathcal{F}$}
\psfrag{beta}{$\beta$}
\includegraphics[width= 0.5 \linewidth]{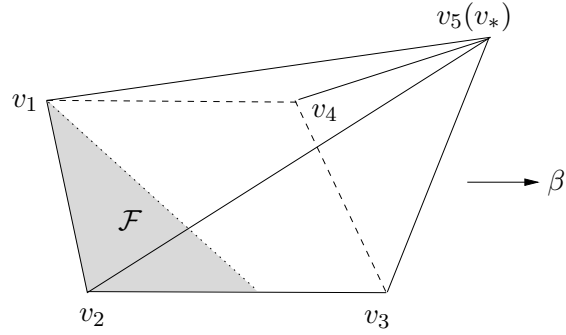}
\caption{$\mathcal{F}$ is not a facet of $\cP$ but $v_* \not \in \bar{\mathcal{F}}$. \label{Fig:Ill1}}
\end{center}
\end{figure}

{\bf Triangulation of $\cP$ with respect to $\cF$:}
\label{triangulation2} \
\begin{enumerate}
\item[(a)]
Select $v_*$ as in Lemma~\ref{Lem:ExtremePoint} and so that $v_\ast \not\in \bar{\cF}$.
\item[(b)]
Make a triangulation of $\text{vert}(\bar{\cF}) \cup \text{vert}(\mathcal{F})$ such that the interior of each
resulting simplex is either entirely in $\mathcal{F}$ or not in $\mathcal{F}$. For the remaining facets
$\mathcal{F}_j$ of $\mathcal{P}$, make a triangulation of $\text{vert}(\mathcal{F}_j)$.
Denote $\{\mathcal{S}^i_ {\mathcal{F}_j}: i =1, \dots, k_j\}$ the triangulation for $\mathcal{F}_j$.
\item[(c)]
Let $\mathbb{S} =\{\mathcal{S}_1, \dots, \mathcal{S}_q\} := \{\text{conv}(v_*, \mathcal{S}^i_{\mathcal{F}_j}) : \mathcal{F}_j \text{ is any facet of $\mathcal{P}$ not containing } v_* \}$.
\end{enumerate}

The first thing we notice is that nothing about the proof of Lemma~\ref{Lem:Triangulation} is specific to $\cF$ being
a facet, so the lemma still holds for the new triangulation. Also the proof of Theorem~\ref{Thm:ControlforPolytope}
is unchanged since the essential property of $v_{\ast}$ (namely Lemma~\ref{Lem:Triangulation})
is still true. Therefore, we have the following direct extension of Theorem~\ref{Thm:ControlforPolytope}.
\begin{corollary}
\label{Cor:FnotFacet1}
Suppose that $\cF$ is not a facet of $\cP$ and there exists $v_{\ast}$ as in Lemma~\ref{Lem:ExtremePoint} such
that $v_{\ast} \not\in \bar{\cF}$.
There exists a piecewise affine state feedback that achieves
$\mathcal{P} \overset{\mathcal{P}}{\longrightarrow} \mathcal{F}$
if and only if $\cP \overset{\cP}{\longrightarrow} \cF$ using open-loop control.
\end{corollary}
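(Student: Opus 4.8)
The plan is to reduce the statement to Theorem~\ref{Thm:ControlforPolytope} by showing that the triangulation, greedy path‑generation, and piecewise affine synthesis used there still work when $\cF$ is only a sub‑polytope of the facet $\bar\cF$, provided $v_\ast\notin\bar\cF$. The forward implication is immediate: a piecewise affine feedback achieving $\cP\overset{\cP}{\longrightarrow}\cF$ produces, along each closed‑loop trajectory, an open‑loop control with the same trajectory, so the problem is solvable by open‑loop control. For the converse, I would assume $\cP\overset{\cP}{\longrightarrow}\cF$ by open‑loop control, pick $v_\ast$ as in Lemma~\ref{Lem:ExtremePoint} with $v_\ast\notin\bar\cF$ (possible by hypothesis), and build the triangulation $\mathbb{S}$ of $\cP$ with respect to $\cF$ described above.

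Two observations drive the argument. First, the proof of Lemma~\ref{Lem:Triangulation} never uses that $\cF$ is a facet of $\cP$: the line through $v_\ast$ and an arbitrary $x\in\cP$ meets a facet $\cF_j$ of $\cP$ not containing $v_\ast$, and the step‑(b) triangulation of that facet supplies a sub‑simplex through the intersection point. Hence $\mathbb{S}$ is still a triangulation of $\text{vert}(\cP)\cup\text{vert}(\cF)$ in which every simplex has $v_\ast$ as a vertex. Second, step (b) of the construction forces each sub‑simplex of $\bar\cF$ to be either contained in $\cF$ or to meet $\cF$ only in a proper face; consequently $\cF$ is exactly the union of those facets $\cS_i\cap\cF$ of simplices $\cS_i\in\mathbb{S}$ that are $(n-1)$‑dimensional. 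Treating $\cF$ (rather than a single facet) as the finished set $\cS_0$, the adjacency relation $\cS_i\sim\cF$, the greedy algorithm for path generation, and the piecewise affine synthesis of Section~\ref{sec:controlpoly} — built from Proposition~\ref{Prop:ControlforSimplex} on each simplex — are all well defined and run exactly as before.

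It then remains to repeat the proof of Theorem~\ref{Thm:ControlforPolytope}: for the pair $(\cS_i,\cS_j)\in\cR_u\times\cR_f$ selected at any stage, conditions (a) and (b) of Theorem~\ref{Thm:NoIntersection} hold for $\cS_i\overset{\cS_i}{\longrightarrow}\cF_{ij}$. Condition (b) is in fact easier here: since $v_\ast\notin\bar\cF\supseteq\cF$, Lemma~\ref{Lem:ExtremePoint} forces $v_\ast\notin\cO$, and because $v_\ast\in\cP^+\cap\cS_i\subseteq\cS_i^+$ we get $\cS_i^+\not\subset\cO$, hence $\cS_i^+\not\subset\cO\cap\ri{\cH}^+$, which covers uniformly also the pairs $(\cS_i,\cF)$ whose exit facet does not contain $v_\ast$. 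Condition (a) follows from the identical three‑case analysis used in the proof of Theorem~\ref{Thm:ControlforPolytope}, which uses only that $\cP\overset{\cP}{\longrightarrow}\cF$ (hence Theorem~\ref{Thm:NoIntersection}(a) and the location of $v^-$), the greedy choice of the pair, and — in the third case — the compatibility of the triangulation with $\cF$ provided by step (b). Therefore the path‑generation algorithm terminates with $\cR_u=\emptyset$, and the resulting piecewise affine feedback achieves $\cP\overset{\cP}{\longrightarrow}\cF$.

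The only genuinely new point to be careful about — and the place where I expect the bookkeeping to be heaviest — is that the target $\cF$ is now a union of several simplex‑facets rather than a single facet, so one must check that the tie‑breaking rule of the greedy algorithm (choosing the exit facet with the most vertices in $\cB_{w'}$) and the contradiction arguments in case~3 of the proof of Theorem~\ref{Thm:ControlforPolytope} still go through; step (b) of the triangulation is precisely what makes this possible, since it guarantees that any low‑$\beta$ slice $\cB_{w'}\cap\cP$ lying in $\cF$ is a subcomplex of $\mathbb{S}$. The closedness and bounded‑control difficulties of Section~\ref{sec:reachpoly} do not intervene here, since open‑loop solvability of $\cP\overset{\cP}{\longrightarrow}\cF$ is assumed outright.
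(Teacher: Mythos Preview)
Your proposal is correct and follows essentially the same approach as the paper, which simply observes that Lemma~\ref{Lem:Triangulation} and the proof of Theorem~\ref{Thm:ControlforPolytope} go through unchanged for the new triangulation. Your treatment of condition~(b) is in fact more careful than the paper's: you correctly note that when the exit facet lies in $\cF$ the vertex $v_\ast$ need not belong to it, and you close the gap by invoking $v_\ast\notin\cO$ (forced by $v_\ast\notin\bar\cF$ and Lemma~\ref{Lem:ExtremePoint}) to conclude $\cS_i^+\not\subset\cO$ directly.
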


When there does not exist $v_\ast \not\in \bar{\cF}$, the problem is more complex because
Lemma~\ref{Lem:Triangulation} breaks down. Nevertheless, we would like to build upon our previous
triangulation and control methods by appropriately subdividing $\cP$. A natural idea would be to form
$\cP_1 := \conv(\cF, v ~|~ v \in V \setminus \bar{\cF})$,
a polytope for which $\cF$ is a facet. There are two problems to
be addressed. First, can $\cP_1$ have failure sets for the problem
$\cP_1 \overset{\cP_1}{\longrightarrow} \cF$ even if $\cP \overset{\cP}{\longrightarrow} \cF$?
Theorem~\ref{Thm:NoIntersection} tell us that $\cH^- \setminus (\cF \cap \cB^-) = \emptyset$ and we
observe that this condition is identical for any polytope with the same exit facet $\cF$. Therefore,
condition (a) holds for $\cP_1 \overset{\cP_1}{\longrightarrow} \cF$. Instead, it is condition (b)
which is problematic because generally $\cP^+_1 \neq \cP^+$ and equilibria can appear on $\cP_1^+$ when
we try to solve $\cP_1 \overset{\cP_1}{\longrightarrow} \cF$. A more careful approach is needed,
and inspiration is provided by the proof of Theorem~\ref{Thm:ControlforPolytope}: for any $n$-dimensional
polytope $\tilde{P} \subset \cP$ with exit facet $\cF$, if $\tilde{\cP}^+ \cap \cF \neq \emptyset$, then
condition (b) automatically holds. See Figure~\ref{Fig:Ill3} for an example. Thus, we have the following.
\begin{figure}[!t]
\begin{center}
\psfrag{v6}{$v_6$}
\psfrag{v5}{$v_5$}
\psfrag{v2}{$v_2$}
\psfrag{v4}{$v_4$}
\psfrag{vm}{$v_1(v_-)$}
\psfrag{vs}{$v_3(v_*) \in \mathcal{O}$}
\psfrag{f}{$\mathcal{F}$}
\psfrag{beta}{$\beta$}
\includegraphics[width= 0.6 \linewidth]{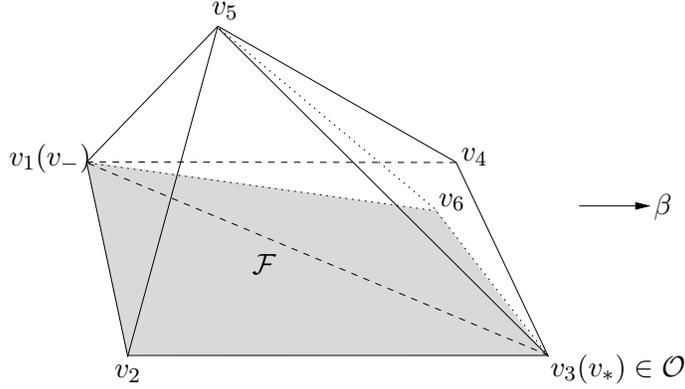}
\caption{$\mathcal{F}=\text{conv}\{v_1, v_2, v_3, v_6\} \subset \bar{\mathcal{F}}$
and $v_*=v_3 \in \mathcal{F}$. \label{Fig:Ill3}}
\end{center}
\end{figure}

\begin{proposition}
\label{prop1}
Suppose there exists $v_{\ast}$ a vertex of $\cF$ such that $v_{\ast} \in \cF \cap \cP^+$.
Let $\tilde{\cP} \subset \cP$ be an $n$-dimensional polytope
such that $\cF$ is a facet of $\tilde{\cP}$. Then $\cP \overset{\cP}{\longrightarrow} \cF$ implies
$\tilde{\cP} \overset{\tilde{\cP}}{\longrightarrow} \cF$.
\end{proposition}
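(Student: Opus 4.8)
The plan is to apply Theorem~\ref{Thm:NoIntersection} directly to the reachability problem $\tilde{\cP} \overset{\tilde{\cP}}{\longrightarrow} \cF$. First I would check that its hypotheses transfer from $\cP$ to $\tilde{\cP}$. Conditions (A1)--(A2) concern only $(A,B)$; for (A3), any point of $\ri{\tilde{\cP}}$ has a neighbourhood contained in $\tilde{\cP} \subseteq \cP$ and hence lies in $\ri{\cP}$, so $\ri{\tilde{\cP}} \cap \cO \subseteq \ri{\cP} \cap \cO = \emptyset$; and (A4) holds because $\cF$ is assumed to be a facet of $\tilde{\cP}$. Moreover, since $\tilde{\cP} \subseteq \cP$ the same unit normal $\beta$ still satisfies $\beta^T(Ax+a) \le 0$ on $\tilde{\cP}$, and since the exit facet $\cF$ is unchanged, the extreme points $v^-, v^+$ of $\cF$, the restricted half-spaces $\cH^\pm \cap \tilde{\cP}$, and the auxiliary set $\cB^-$ (which depends only on $\cB_{v^-}$, $\cO$, and $\cF$) coincide with the corresponding objects built from $\cP$; in particular the set called $\cB^-$ in Theorem~\ref{Thm:NoIntersection}(a) is literally the same for $\tilde{\cP}$.

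Condition (a) for $\tilde{\cP}$ is then immediate: writing $\tilde{\cH}^- := \{x \in \tilde{\cP} : \beta^T x \le \beta^T v^-\}$, we have $\tilde{\cH}^- \subseteq \cH^-$ because $\tilde{\cP} \subseteq \cP$, while $\cP \overset{\cP}{\longrightarrow} \cF$ together with Theorem~\ref{Thm:NoIntersection}(a) gives $\cH^- \setminus (\cF \cup \cB^-) = \emptyset$; hence $\tilde{\cH}^- \setminus (\cF \cup \cB^-) \subseteq \cH^- \setminus (\cF \cup \cB^-) = \emptyset$, which is condition (a) for $\tilde{\cP}$.

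For condition (b) I would exhibit $v_\ast$ as the required witness that $\tilde{\cP}^+ \not\subset \cO \cap \ri{\tilde{\cH}}^+$. On the one hand, $v_\ast \in \cP^+$ means $\beta^T v_\ast = \max_{x \in \cP}\beta^T x$, and since $\cF$ is a facet of $\tilde{\cP}$ we have $v_\ast \in \tilde{\cP} \subseteq \cP$, so $\beta^T v_\ast = \max_{x \in \tilde{\cP}} \beta^T x$, i.e.\ $v_\ast \in \tilde{\cP}^+$. On the other hand, $v_\ast \in \cF$ forces $\beta^T v_\ast \le \max_{x \in \cF}\beta^T x = \beta^T v^+$, so $v_\ast$ lies outside $\ri{\tilde{\cH}}^+$ (which contains only points of $\tilde{\cP}$ whose $\beta$-component strictly exceeds $\beta^T v^+$), and therefore $v_\ast \notin \cO \cap \ri{\tilde{\cH}}^+$. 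Thus $\tilde{\cP}^+$ is not contained in $\cO \cap \ri{\tilde{\cH}}^+$, so condition (b) holds. With (a) and (b) verified, Theorem~\ref{Thm:NoIntersection} yields $\tilde{\cP} \overset{\tilde{\cP}}{\longrightarrow} \cF$.

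The argument is short, and the only point needing care is the bookkeeping in the first step: one must be sure that restricting to the subpolytope $\tilde{\cP}$ changes neither the normal direction $\beta$ nor the data $v^\pm$, $\cB^-$ entering Theorem~\ref{Thm:NoIntersection}, the crucial fact being that $\cF$ is literally the same exit facet. Once that is settled, the inclusion $\tilde{\cH}^- \subseteq \cH^-$ and the membership $v_\ast \in \tilde{\cP}^+ \cap \cF$ make conditions (a) and (b) automatic.
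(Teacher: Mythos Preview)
Your proposal is correct and follows essentially the same approach as the paper: both verify conditions (a) and (b) of Theorem~\ref{Thm:NoIntersection} for $\tilde{\cP}$, using that the exit facet $\cF$ (and hence $v^\pm$, $\cB^-$) is unchanged, and that $v_\ast \in \cF \cap \cP^+$ forces $\tilde{\cP}^+$ to meet $\cF$ so condition (b) is automatic. Your treatment of condition (a) via the inclusion $\tilde{\cH}^- \subseteq \cH^-$ is in fact slightly cleaner than the paper's assertion that $\tilde{\cH}^- = \cH^-$, and your explicit verification that Assumptions (A1)--(A4) transfer to $\tilde{\cP}$ is a useful addition the paper omits.
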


\begin{proof}
Consider condition (b) of Theorem~\ref{Thm:NoIntersection}
for $\tilde{\cP} \overset{\tilde{\cP}}{\longrightarrow} \cF$. We have to show that
$\tilde{P}^+ \not\subset \cO \cap \{ x \in \tilde{P} ~|~ \beta^T x > \beta^T v^+ \}$. But $v_{\ast} \in \cF \cap \cP^+$
implies $\tilde{P}^+ = \{ x \in \tilde{P} ~|~ \beta^T x = \beta^T v^+ \}$, so condition (b) is obviously true.

For condition (a), Theorem~\ref{Thm:NoIntersection} tells us that
$\cH^- \setminus (\cB^- \cup \cF) = \emptyset$ and since $\tilde{H}^- = \cH^-$ and
$\tilde{B}^- = \cB^-$, condition (a) obviously holds for $\tilde{\cP} \overset{\tilde{\cP}}{\longrightarrow} \cF$.
\end{proof}

Proposition~\ref{prop1} gives some indication of how the polytope $\cP_1$ which has $\cF$ as a facet could be constructed.
Now we face the second problem. The set $\cP \setminus \cP_1$ is of course not a polytope. How shall it be subdivided
and what reachability problems need to be assigned to avoid new failure sets from appearing? The problem is
difficult due to the generality of the description of $\cF$. However, the following proposition gives some indication
of how other polytopes can be constructed which do not have $\cF$ as their exit facet.

\begin{proposition}
\label{prop2} Suppose there exists $v_{\ast}$, a vertex of $\cF$,
such that $v_{\ast} \in \cF \cap \cP^+$. Let $\tilde{\cP} \subset
\cP$ be an $n$-dimensional polytope and let $\tilde{\cF}$ be an
$(n-1)$-dimensional polytope which is a facet of $\tilde{\cP}$.
Suppose there exist $\tilde{v}^- \in \tilde{\cF} \cap \arg\min\{
\beta^T x ~|~ x \in \cP \}$ and $\tilde{v}^+ \in \tilde{\cF} \cap
\cP^+$. Then $\cP \overset{\cP}{\longrightarrow} \cF$ implies
$\tilde{\cP} \overset{\tilde{\cP}}{\longrightarrow} \tilde{\cF} $.
\end{proposition}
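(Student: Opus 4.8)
The plan is to verify the two conditions of Theorem~\ref{Thm:NoIntersection} directly for the reachability problem $\tilde{\cP} \overset{\tilde{\cP}}{\longrightarrow} \tilde{\cF}$, mimicking the argument in Proposition~\ref{prop1} but now accounting for the fact that the exit facet is a general $\tilde{\cF}$ rather than $\cF$ itself. The key point is that the hypotheses $\tilde{v}^- \in \tilde{\cF} \cap \arg\min\{\beta^T x : x \in \cP\}$ and $\tilde{v}^+ \in \tilde{\cF} \cap \cP^+$ pin down the minimum and maximum $\beta$-components of $\tilde{\cF}$ to coincide with those of the ambient polytope $\cP$; everything then reduces to observations about half-spaces.

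First I would treat condition (b). Since $\tilde{v}^+ \in \cP^+$ is a point of $\tilde{\cF}$ achieving $\max\{\beta^T x : x \in \cP\}$, and $\tilde{\cP} \subset \cP$, the maximum of $\beta^T x$ over $\tilde{\cP}$ equals $\beta^T v^+$, and it is attained on $\tilde{\cF}$. Hence $\tilde{\cP}^+ = \{x \in \tilde{\cP} : \beta^T x = \beta^T v^+\}$ intersects $\tilde{\cF}$, so $\tilde{\cP}^+$ is \emph{not} contained in $\{x \in \tilde{\cP} : \beta^T x > \beta^T \tilde{v}^+\}$ (the latter set being relatively open in the appropriate sense, or more simply: the reachable-target facet $\tilde{\cF}$ itself meets $\tilde{\cP}^+$). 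This is exactly condition (b) of Theorem~\ref{Thm:NoIntersection} applied with state space $\tilde{\cP}$ and target $\tilde{\cF}$. Next, for condition (a): the set $\tilde{\cH}^-$ for the problem on $\tilde{\cP}$ is $\{x \in \tilde{\cP} : \beta^T x \le \beta^T \tilde{v}^-\}$, and since $\tilde{v}^- \in \arg\min\{\beta^T x : x \in \cP\}$, every point of $\tilde{\cP} \subset \cP$ satisfies $\beta^T x \ge \beta^T \tilde{v}^-$. Therefore $\tilde{\cH}^- = \tilde{\cB}_{\tilde{v}^-} \cap \tilde{\cP} = \{x \in \tilde{\cP} : \beta^T x = \beta^T \tilde{v}^-\}$, which is contained in $\tilde{\cF}$ because $\tilde{v}^- \in \tilde{\cF}$ and $\tilde{\cF}$ is a facet. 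Consequently $\tilde{\cH}^- \setminus \tilde{\cF} = \emptyset$, hence a fortiori $\tilde{\cH}^- \setminus (\tilde{\cF} \cup \tilde{\cB}^-) = \emptyset$, which is condition (a).

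I would also double-check the hidden standing hypotheses: the set of possible equilibria $\cO$ depends only on $(A,B,a)$, not on the polytope, so $\ri{\tilde{\cP}} \cap \cO = \emptyset$ follows from $\ri{\tilde{\cP}} \subseteq \ri{\cP}$ is \emph{not} automatic — rather, one uses that $\tilde{\cP} \subseteq \cP$ together with (A3) forces $\ri{\tilde{\cP}} \cap \cO \subseteq \cP \cap \cO \subseteq \p \cP$, and since $\ri{\tilde{\cP}}$ is relatively open of full dimension $n$ while $\cP \cap \cO$ is contained in a hyperplane meeting $\cP$ only on its boundary, the intersection is empty; and the choice of $\beta$ with $\beta^T(Ax+a) \le 0$ on $\cP$ serves equally on $\tilde{\cP}$. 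With Assumption~\ref{assum1} thus inherited, Theorem~\ref{Thm:NoIntersection} applies and the two verified conditions give $\tilde{\cP} \overset{\tilde{\cP}}{\longrightarrow} \tilde{\cF}$.

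The one place requiring care — the ``main obstacle'' — is the precise handling of the relative openness in condition (b): Theorem~\ref{Thm:NoIntersection}(b) is stated as $\cP^+ \not\subset \cO \cap \ri{\cH}^+$, and one must be sure that when $\tilde{v}^+ \in \tilde{\cF} \subset \tilde{\cP}$ achieves the maximum $\beta$-component, the whole face $\tilde{\cP}^+$ genuinely fails to lie in $\cO \cap \ri{\tilde{\cH}}^+$. This is immediate once one notes $\tilde{v}^+ \in \tilde{\cP}^+ \cap \tilde{\cF}$ and that points of the target facet are excluded from the candidate failure set by construction; but it is worth writing out so that the degenerate case $\dim \tilde{\cP}^+ = 0$ (a single vertex) is covered uniformly. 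The rest is bookkeeping about which half-spaces shrink to which faces.
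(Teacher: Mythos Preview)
Your verification of condition (b) is fine and matches the paper's argument. The gap is in condition (a). You claim that
\[
\tilde{\cH}^- \;=\; \{x \in \tilde{\cP} : \beta^T x = \beta^T \tilde{v}^-\} \;\subset\; \tilde{\cF}
\]
``because $\tilde{v}^- \in \tilde{\cF}$ and $\tilde{\cF}$ is a facet.'' This inference is false in general: the face of $\tilde{\cP}$ on which $\beta^T x$ is minimized can be strictly larger than its intersection with any single facet containing $\tilde{v}^-$. Concretely, take $\cP = [0,1]^3$, $\beta = e_3$, $\cO = \{x_3 = 0\}$, and $\cF = \{0\}\times[0,\tfrac12]\times[0,1]$ (so $v^- = 0$, $v_\ast = (0,0,1)$, and one checks $\cP \overset{\cP}{\longrightarrow} \cF$). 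Slice $\cP$ by the hyperplane $\{x_1 = x_2\}$ through $v^-$ and $v_\ast$; let $\tilde{\cP}$ be the half with $x_1 \ge x_2$ and $\tilde{\cF}$ the slice. Then $\tilde{\cH}^- = \{(x_1,x_2,0): 0 \le x_2 \le x_1 \le 1\}$, which is certainly not contained in $\tilde{\cF} = \{x_1 = x_2\}$. So your argument for (a) does not go through; indeed, note that you never invoke the hypothesis $\cP \overset{\cP}{\longrightarrow} \cF$ anywhere, which is a warning sign since without it the conclusion is false.

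What rescues condition (a) is precisely the original reachability hypothesis. From $\cP \overset{\cP}{\longrightarrow} \cF$ and Theorem~\ref{Thm:NoIntersection}(a) one extracts that $\{x \in \cP : \beta^T x < \beta^T v^-\} = \emptyset$ (so $\cB_{\tilde v^-} = \cB_{v^-}$) and that $\cB_{v^-} \cap \cP \subset \cO \cup \cF$. Intersecting with $\tilde{\cP}$ then gives $\tilde{\cH}^- = \cB_{v^-}\cap\tilde{\cP} \subset \cO \cup \tilde{\cF}$, and the $\cO$-part is absorbed into $\tilde{\cB}^-$ (since $\tilde v^- \in \tilde\cF \cap \cB_{v^-}$ lies in $\cO$ whenever that part is nonempty). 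In the example above this is exactly what happens: every point of $\tilde{\cH}^- \setminus \tilde{\cF}$ sits in $\cO$, not in $\tilde{\cF}$. This is the paper's route, and it is the piece your argument is missing.
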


\begin{proof}
By the assumption $\tilde{v}^+ \in \tilde{\cF} \cap \cP^+$ and by
the same argument as in Proposition~\ref{prop1}, condition (b) for
$\tilde{\cP} \overset{\tilde{\cP}}{\longrightarrow} \tilde{\cF}$
obviously holds. Consider condition (a) for $\cP
\overset{\cP}{\longrightarrow} \cF$. It says that $\{ x \in \cP
~|~ \beta^T x \le \beta^T v^- \} \setminus (\cF \cup \cB^-) =
\emptyset$. Equivalently, $\{ x \in \cP ~|~ \beta^T x < \beta^T
v^- \} = \emptyset$ and $\cB_{v^-} \cap \cP \subset \cO \cup \cF$.
Because $\tilde{v}^- \in \tilde{\cF} \cap \arg\min\{ \beta^T x ~|~
x \in \cP \}$, this means $\{ x \in \tilde{\cP} ~|~ \beta^T x <
\beta^T \tilde{v}^- \} = \emptyset$ and because $\cB_{v^-} \cap
\cP \subset \cO \cup \cF$, one obtains $\cB_{v^-} \cap \tilde{\cP}
\subset \cO \cup \tilde{\cF}$. Thus condition (a) for $\tilde{\cP}
\overset{\tilde{\cP}}{\longrightarrow} \tilde{\cF}$   holds. So
the conclusion follows.
\end{proof}

We would like to apply Propositions~\ref{prop1} and \ref{prop2} to
solve the synthesis problem when $\cF$ is not a facet of $\cP$ and
there exists a vertex of $\cF$ satisfying $v_{\ast} \in \cF \cap
\cP^+$. We introduce an important new construct for synthesis of
piecewise affine controllers. Rather than using a subdivision of
$\cP$ we begin the design with a cover of $\cP$, which later will
be refined to a subdivision for control synthesis. A {\em cover}
of $\mathcal{V}$ is a finite collection $\PP=\{\mathcal{P}_1,
\dots, \mathcal{P}_m\}$ of $n$-dimensional polytopes such that the
vertices of each $\mathcal{P}_i$ are drawn from $\mathcal{V}$ and
$\text{conv}(\mathcal{V})$ is the union of $\mathcal{P}_1, \dots,
\mathcal{P}_m$. Informally, a cover is a subdivision except that
the sub-polytopes can intersect on their interiors.

{\bf Cover of $\cP$ with respect to $\cF$:} \label{cover1} \
\begin{enumerate}
\item
Select $v_*$ a vertex of $\cF$ such that $v_{\ast} \in \cF \cap \cP^+$.
\item
Construct any hyperplane that goes through
points $v_-$ and $v_*$, and partitions $\mathcal{P}$ into two $n$-dimensional sub-polytopes
$\mathcal{P}_2$ and $\mathcal{P}_3$.
\item
Define $\cP_1 = \conv(\cF, \cP_2\cap \cP_3)$.
\item
Define the cover $\PP := \{ \cP_1, \cP_2, \cP_3 \}$.
\end{enumerate}

By using this cover, we obtain the following result.
\begin{theorem}
\label{theorem1}
Suppose that $\cF$ is not a facet of $\cP$ and there exists $v_{\ast}$, a vertex of $\cF$,
such that $v_{\ast} \in \cF \cap \cP^+$.
There exists a piecewise affine state feedback that achieves
$\mathcal{P} \overset{\mathcal{P}}{\longrightarrow} \mathcal{F}$
if and only if $\cP \overset{\cP}{\longrightarrow} \cF$ using open-loop control.
\end{theorem}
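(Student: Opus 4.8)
The forward implication is immediate: if a piecewise affine feedback $u$ achieves $\cP \overset{\cP}{\longrightarrow} \cF$, then for each $x_0$ the signal $t \mapsto u(\phi^u_t(x_0))$ is a piecewise continuous open-loop control steering $x_0$ to $\cF$ with constraint in $\cP$. So the substance is the converse, and the plan is to work with the cover $\PP = \{\cP_1, \cP_2, \cP_3\}$ produced by the construction ``Cover of $\cP$ with respect to $\cF$''. Write $\cG$ for the hyperplane through $v^-$ and $v_\ast$, let $\cP_2, \cP_3$ be the two closed pieces into which $\cG$ splits $\cP$, set $\tilde\cF := \cP_2 \cap \cP_3 = \cG \cap \cP$, and $\cP_1 := \conv(\cF, \tilde\cF)$. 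Each $\cP_i$ is an $n$-dimensional sub-polytope of $\cP$, hence inherits (A1)--(A3) (in particular $\ri{\cP_i} \subseteq \ri{\cP}$, which is disjoint from $\cO$); by construction $\cF$ is a facet of $\cP_1$ and $\tilde\cF$ is a facet of both $\cP_2$ and $\cP_3$, so (A4) holds for each sub-problem as well.

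First I would discharge the three sub-reachability problems, each of which now has a facet as target. For $\cP_1 \overset{\cP_1}{\longrightarrow} \cF$: since $v_\ast \in \cF \cap \cP^+$, Proposition~\ref{prop1} applies directly and gives open-loop solvability. For $\cP_2 \overset{\cP_2}{\longrightarrow} \tilde\cF$ and, symmetrically, $\cP_3 \overset{\cP_3}{\longrightarrow} \tilde\cF$: the slice $\tilde\cF$ contains $v_\ast \in \cP^+$ and contains $v^-$, and condition (a) of Theorem~\ref{Thm:NoIntersection} --- valid because $\cP \overset{\cP}{\longrightarrow} \cF$ --- forces $v^- \in \arg\min\{\beta^Tx : x \in \cP\}$; thus Proposition~\ref{prop2} applies with $\tilde v^- = v^-$ and $\tilde v^+ = v_\ast$, again yielding open-loop solvability. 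Since each target is a facet and each sub-problem is open-loop solvable, Theorem~\ref{Thm:ControlforPolytope} produces piecewise affine feedbacks: $u_1$ on $\cP_1$ with $\cP_1 \overset{\cP_1}{\longrightarrow} \cF$, and $u_i$ on $\cP_i$ with $\cP_i \overset{\cP_i}{\longrightarrow} \tilde\cF$ for $i = 2, 3$.

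It then remains to assemble $u_1, u_2, u_3$ into a single piecewise affine feedback on $\cP = \cP_2 \cup \cP_3$, which is where the cover is refined to a subdivision. The natural rule gives $\cP_1$ priority: put $u := u_1$ on $\cP_1$ and $u := u_i$ on $\cP_i \setminus \cP_1$ for $i = 2, 3$ (as $\cP_2$ and $\cP_3$ meet only along $\tilde\cF \subseteq \cP_1$ there is no further overlap), which is piecewise affine because $\cP_i \setminus \cP_1$ is a finite union of polyhedra on which $u_i$ is piecewise affine. For a trajectory started in $\cP_1$, $u_1$ keeps it in $\cP_1 \subseteq \cP$ and drives it to $\cF$ in finite time; for one started in $\cP_i \setminus \cP_1$, it follows $u_i$ and stays in $\cP_i \subseteq \cP$ until it enters $\cP_1$, which it must do in finite time --- at the latest upon reaching $\tilde\cF \subseteq \cP_1$ --- whereupon $u_1$ takes over and drives it to $\cF$. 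Hence every closed-loop trajectory stays in $\cP$ and reaches $\cF$ in finite time. I expect the main obstacle to lie precisely in making this cover device rigorous: one must verify that the geometric construction genuinely exhibits $\cF$ as a facet of $\cP_1$ and $\tilde\cF$ as a facet of $\cP_2, \cP_3$ (so that Theorem~\ref{Thm:ControlforPolytope} is applicable), and that the switched closed loop obtained from the refined cover has well-defined trajectories --- handled via the convention that each local closed loop is extended to a neighborhood, cf.\ condition (iii) of Problem~\ref{Prbm:CTF} --- which still realize the reachability specification. By contrast, the reachability bookkeeping for the three pieces is routine once Propositions~\ref{prop1} and \ref{prop2} certify that no new failure sets appear.
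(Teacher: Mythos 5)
Your proposal is correct and follows essentially the same route as the paper: the cover $\{\cP_1,\cP_2,\cP_3\}$, Propositions~\ref{prop1} and \ref{prop2} applied with $\tilde v^- = v^-$ and $\tilde v^+ = v_\ast$, Theorem~\ref{Thm:ControlforPolytope} on each piece, and the priority-to-$\cP_1$ assembly of the feedback. Your added justification that condition (a) of Theorem~\ref{Thm:NoIntersection} forces $v^- \in \arg\min\{\beta^T x : x \in \cP\}$ (needed to invoke Proposition~\ref{prop2}) is a detail the paper leaves implicit, but otherwise the arguments coincide.
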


\begin{proof}
$\mathcal{P}_1$ is an $n$-dimensional polytope in $\mathcal{P}$
for which $\cF$ is a facet. Also, $v_{\ast} \in \cF \cap \cP^+$,
so by Proposition~\ref{prop1}, $\mathcal{P}_1
\overset{\mathcal{P}_1}{\longrightarrow} \mathcal{F}$. Next, let
$\cF_{23} = \cP_2 \cap \cP_3$ and notice that $v^-_{23} = v^-$ and
$v^+_{23} = v_{\ast}$. So by Proposition~\ref{prop2}, $\cP_2
\overset{\cP_2}{\longrightarrow} \cF_{23}  $ and $\cP_3
\overset{\cP_3}{\longrightarrow} \cF_{23} $.

Theorem~\ref{Thm:ControlforPolytope}  gives a piecewise affine control
$u(x)=F_{\sigma_1(x)}x +g_{\sigma_1(x)}$, $x \in \cP_1$, that achieves
$\mathcal{P}_1 \overset{\mathcal{P}_1}{\longrightarrow} \mathcal{F}$.
and it gives
$u(x)=F_{\sigma_2(x)}x +g_{\sigma_2(x)}, x \in \mathcal{P}_2$ and
$u(x)=F_{\sigma_3(x)}x +g_{\sigma_3(x)}, x \in \mathcal{P}_3$, that
achieve $\cP_2 \overset{\cP_2}{\longrightarrow} \cF_{23}$ and
$\cP_3 \overset{\cP_3}{\longrightarrow} \cF_{23}$, respectively. 
Since $\mathcal{F}_{23} \subset \cP_3$, it means that the controllers can drive all the
states not in $\cP_1$ to $\cP_1$. Thus, the
following controller
\[
u(x)=
\left
\{\begin{array}{ll}
F_{\sigma_1(x)}x +g_{\sigma_1(x)} & x \in \cP_1 \\
F_{\sigma_2(x)}x +g_{\sigma_2(x)} & x \in \cP_2 \setminus \cP_1 \\
F_{\sigma_3(x)}x +g_{\sigma_3(x)} & x \in \cP_3 \setminus \cP_1
\end{array}
\right.
\]
achieves $\mathcal{P} \overset{\mathcal{P}}{\longrightarrow} \mathcal{F}$.
\end{proof}

\begin{figure}[!t]
\begin{center}
\psfrag{v6}{$v_6$} \psfrag{v5}{$v_5$} \psfrag{v2}{$v_2$}
\psfrag{v4}{$v_4$} \psfrag{w}{$v^+$} \psfrag{v1}{$v_1$}
\psfrag{vs}{$v_3(v_*)$} \psfrag{f}{$\mathcal{F}$}
\psfrag{e}{$\mathcal{E}$} \psfrag{beta}{$\beta$}
\includegraphics[width= 0.6 \linewidth]{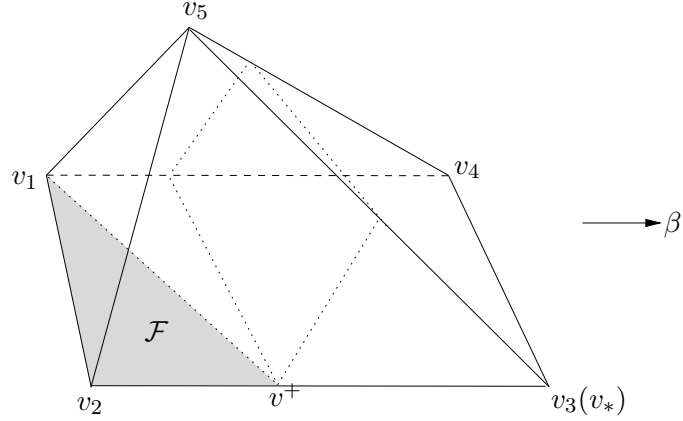}
\caption{$\mathcal{F} \subset \bar{\mathcal{F}}= \conv\{v_1, v_2, v_3, v_4\}$ and all
$v_*$ are in $ \bar{\mathcal{F}}$ but none of them is in $\mathcal{F}$. \label{Fig:Ill2}}
\end{center}
\end{figure}

Finally, we are left with the case when $\mathcal{F}$ is not a facet of $\mathcal{P}$, all
vertices of $\cP$ satisfying Lemma~\ref{Lem:ExtremePoint} are in $\bar{\mathcal{F}}$
but none of them is in $\mathcal{F}$, and moreover there are no vertices of $\cF$ in $\cP^+$. See Figure~\ref{Fig:Ill2}.
Fortunately, this case can be easily handled by our previous results, by observing that $\cF$ and $\cP^+$ are strongly
separated so we can split $\cP$ into a sub-polytope which contains $\cF$ and satisfies Theorem~\ref{theorem1} and another
sub-polytope that does not contain $\cF$ but must be able to reach it. We have the following straightforward extension of
Theorem~\ref{theorem1} and main result of this section.

\begin{theorem}
\label{theorem2}
Suppose that $\cF$ is not a facet of $\cP$.
There exists a piecewise affine state feedback that achieves
$\mathcal{P} \overset{\mathcal{P}}{\longrightarrow} \mathcal{F}$
if and only if $\cP \overset{\cP}{\longrightarrow} \cF$ using open-loop control.
\end{theorem}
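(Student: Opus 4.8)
The plan is to reduce the remaining case to situations already handled by Corollary~\ref{Cor:FnotFacet1} and Theorem~\ref{theorem1}. The ``only if'' direction is trivial, so we focus on ``if''. By Lemma~\ref{Lem:ExtremePoint}, since $\cP \overset{\cP}{\longrightarrow} \cF$, there is a vertex $v_\ast$ of $\cP$ in $\cP^+$ with either $v_\ast \notin \cO$ or $v_\ast \in \cF$. If such a $v_\ast$ can be chosen with $v_\ast \notin \bar{\cF}$, we are done by Corollary~\ref{Cor:FnotFacet1}. If some vertex of $\cF$ lies in $\cF \cap \cP^+$, we are done by Theorem~\ref{theorem1}. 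So the only genuinely new case is: every vertex of $\cP$ satisfying Lemma~\ref{Lem:ExtremePoint} lies in $\bar{\cF}$ but not in $\cF$, and no vertex of $\cF$ is in $\cP^+$. This is exactly the configuration in Figure~\ref{Fig:Ill2}, and the idea is that $\cF$ and $\cP^+$ are ``strongly separated'' along the $\beta$-direction, so $\cP$ can be split by a hyperplane parallel to $\cB$ into a low-$\beta$ piece containing $\cF$ and a high-$\beta$ piece, and each piece falls under an earlier theorem.

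First I would make the separation precise. Set $b^+ := \max\{\beta^T x : x \in \cF\}=\beta^T v^+$ and $b_{\cP} := \max\{\beta^T x : x \in \cP\}$; since no vertex of $\cF$ is in $\cP^+$ we have $b^+ < b_\cP$. Pick a value $c$ with $b^+ < c < b_\cP$ (or more carefully, $c$ equal to the largest $\beta$-value attained on $\bar{\cF}$, ensuring the cut passes just above $\cF$) and cut $\cP$ along the hyperplane $\cB_z$ with $\beta^T z = c$. This yields two $n$-dimensional polytopes: $\cQ^- := \cH^-(z) \cap \cP$ (containing $\cF$, since $\cF \subset \cH^-(z)$) and $\cQ^+ := \cH^+(z) \cap \cP$, sharing the facet $\cG := \cB_z \cap \cP$. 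Because $\ri\cP \cap \cO = \emptyset$ and $\cG$ is parallel to $\cB$, the set $\cG$ is an admissible new target facet for $\cQ^+$, and the $\beta$-monotonicity of all trajectories (Lemma~\ref{lem1} and the discussion preceding it) is what makes $\cG$ reachable from $\cQ^+$.

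Next I would verify the two sub-problems are open-loop solvable. For $\cQ^- \overset{\cQ^-}{\longrightarrow} \cF$: condition (a) of Theorem~\ref{Thm:NoIntersection} is unchanged because it depends only on the exit facet $\cF$ and on $\cO$, both of which are inherited; condition (b) needs checking — here I would argue that $(\cQ^-)^+ = \cG$ which contains points of $\bar{\cF}$, hence points not in $\cO$ (using that $\bar\cF$ is a genuine facet of $\cP$, so $\bar\cF \not\subset \cO$ when $\cO$ only touches the boundary), so (b) holds; moreover $\cQ^-$ now falls under Theorem~\ref{theorem1} or Corollary~\ref{Cor:FnotFacet1} since by shrinking we can arrange a vertex of $\cF$ in $(\cQ^-)^+$ or an extreme vertex off $\bar\cF$. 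For $\cQ^+ \overset{\cQ^+}{\longrightarrow} \cG$: $\cG$ is a facet of $\cQ^+$, condition (a) holds trivially because $\cG$ contains $\arg\min\{\beta^T x : x \in \cQ^+\}$ (everything in $\cQ^+$ has $\beta$-component $\geq c$, attained on $\cG$), and condition (b) holds because the vertex $v_\ast$ of $\cP$ with $v_\ast \in \cP^+$, which lies in $\bar\cF \cap \cH^+(z)$, is a vertex of $\cQ^+$ in $(\cQ^+)^+$ and satisfies $v_\ast \notin \cO$ or $v_\ast \in \cF$ — the latter is impossible here since $v_\ast \in \cH^+(z)$ while $\cF \subset \cH^-(z) \setminus \cG$, so $v_\ast \notin \cO$, giving (b) via the argument of Lemma~\ref{Lem:ExtremePoint}. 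Then Theorem~\ref{Thm:ControlforPolytope} applies to $\cQ^+$ with facet target $\cG$.

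Finally I would assemble the feedback: apply Theorem~\ref{theorem1}/Corollary~\ref{Cor:FnotFacet1} on $\cQ^-$ to get a piecewise affine $u^-$ achieving $\cQ^- \overset{\cQ^-}{\longrightarrow} \cF$, apply Theorem~\ref{Thm:ControlforPolytope} on $\cQ^+$ to get piecewise affine $u^+$ achieving $\cQ^+ \overset{\cQ^+}{\longrightarrow} \cG$, and define $u = u^-$ on $\cQ^-$ and $u = u^+$ on $\cQ^+ \setminus \cQ^-$. Since $\cG \subset \cQ^-$, every trajectory entering $\cG$ from $\cQ^+$ then continues under $u^-$ to reach $\cF$; concatenating gives $\cP \overset{\cP}{\longrightarrow} \cF$, and $u$ is piecewise affine. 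The main obstacle I anticipate is the careful placement of the cutting value $c$ and checking condition (b) for $\cQ^-$: one must make sure that after the cut, $\cQ^-$ genuinely falls into one of the already-solved cases (i.e.\ that it has a suitable $v_\ast$), which forces $c$ to be chosen so the cut hyperplane actually lies within the relative interior of $\bar\cF$ in the $\beta$-direction, exploiting that $v^+ \in \cF$ has strictly smaller $\beta$-component than the top of $\bar\cF$; the degenerate subcase where $\cF$ itself occupies the entire $\beta$-extent of $\bar\cF$ must be noted to not occur under the standing hypotheses, or handled separately.
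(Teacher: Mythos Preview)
Your overall strategy matches the paper's: in the residual case you split $\cP$ by a hyperplane parallel to $\cB$, apply Theorem~\ref{theorem1} to the lower piece containing $\cF$, apply Theorem~\ref{Thm:ControlforPolytope} to the upper piece with the cut as its target facet, and concatenate the two piecewise affine feedbacks. Your verification for the upper piece $\cQ^+ \overset{\cQ^+}{\longrightarrow} \cG$ is correct.

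The gap is precisely the ``obstacle'' you flag yourself: the placement of the cut. You take $c$ with $\beta^T v^+ < c < b_\cP$, which guarantees that \emph{no} vertex of $\cF$ lies in $(\cQ^-)^+=\cG$, so Theorem~\ref{theorem1} does not apply to $\cQ^-$. You then gesture at ``by shrinking we can arrange a vertex of $\cF$ in $(\cQ^-)^+$'', but this is circular unless you actually let $c$ reach $\beta^T v^+$; and your alternative choice of $c$ as the top $\beta$-value on $\bar\cF$ equals $b_\cP$ in this case (since $v_\ast\in\bar\cF\cap\cP^+$), which collapses the cut. Nor can you fall back on Corollary~\ref{Cor:FnotFacet1} for $\cQ^-$ without further case analysis on whether the new vertices on $\cG$ avoid both $\bar\cF$ and $\cO$.

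The paper's fix is to cut exactly at $\cB_{v^+}$, i.e.\ take $c=\beta^T v^+$. Then $v^+$ is a vertex of $\cF$ lying in $\cF\cap\cP_1^+$, so Theorem~\ref{theorem1} applies to $\cP_1$ immediately, with no residual obstacle. Condition~(a) for $\cP_2 \overset{\cP_2}{\longrightarrow} \cB_{v^+}\cap\cP$ holds because the target is the entire bottom slice of $\cP_2$, and condition~(b) holds via the vertex $v_\ast\in\cP^+$ with $v_\ast\notin\cO$ (since in this case $v_\ast\in\bar\cF\setminus\cF$ forces $v_\ast\notin\cO$ by Lemma~\ref{Lem:ExtremePoint}), exactly as you argued. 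So your proof becomes complete once you move the cut down to $\beta^T v^+$.
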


\begin{proof}
We only consider the case excluded by
Corollary~\ref{Cor:FnotFacet1} and Theorem~\ref{theorem1} as
described above. Consider the hyperplane $\cB_{v^+}$ that
partitions $\cP$ into two sub-polytopes $\cP_1$ and $\cP_2$, such
that  $\cF \subset \cP_1$ and $v_+$ is a vertex of $\cF$
satisfying $v^+ \in \cF \cap \cP_1^+$ (see Figure~\ref{Fig:Ill2}
for an example). From Theorem~\ref{theorem1}, we have that $\cP_1
\overset{\cP_1}{\longrightarrow} \cF$ and from the assumption $\cP
\overset{\cP}{\longrightarrow} \cF$ and
Theorem~\ref{Thm:NoIntersection} it can be verified that $\cP_2
\overset{\cP_2}{\longrightarrow} \cB_{v^+} \cap \cP$.
\end{proof}

\section{Triangulation with respect to $\cO$}
\label{sec:trianglewrtO}

So far we have studied reachability problems and control synthesis
under the assumption $\ri{\mathcal{P}} \cap \mathcal{O}  =
\emptyset$. In order to solve the general problem when
$\ri{\mathcal{P}} \cap \mathcal{O}  \neq \emptyset$ we want to
partition $\cP$ along $\cO$ and apply the results of the previous
sections. There are two related complications. First, it can happen that when we
split $\cP$ along $\cO$ to form two polytopes, $\cP_1$ and
$\cP_2$, one of the two target sets $\cP_i \cap \cF$, even if not
empty, may no longer be an $(n-1)$-dimensional polytope. Even if
for example $\mathcal{P}_i
\overset{\mathcal{P}_i}{\longrightarrow} \cP_i \cap \cF$ with the
target of dimension less than $(n-1)$, the control synthesis
methods of the previous section do not apply. 
Second, the same lower-dimensional reachability
problem can arise even if we have not already partitioned along $\cO$. Therefore, we
assume in the following that when we say
$\mathcal{P} \overset{\mathcal{P}}{\longrightarrow} \mathcal{F}$,
there does not exist a full-dimensional set of states in
$\mathcal{P}$ that must reach a lower-dimensional (less than $n-1$)
subset in $\cF$ in order to achieve $\mathcal{P}
\overset{\mathcal{P}}{\longrightarrow} \mathcal{F}$. 

Now we would like to propose a partition method which splits $\cP$ along $\cO$ into two
polytopes $\cP_1$ and $\cP_2$. Each subpolytope $\cP_i$ will then have two possible target sets.
One target is the original facet $\cF \cap \cP_i$. A second target is $\cO \cap \cP$. This second
target captures the idea that some trajectories must cross over from one side of $\cO$ to the other
before reaching $\cF$. This means that a new reachability problem must be investigated which involves
two targets. One could try to make a subdivision according to which target the points in $\cP_i$ can reach.
However, this approach will generally require new techniques not already developed in the paper. We illustrate
with an example. 

\begin{example}
Consider the 2D example as in Fig.~\ref{Fig:ce3}.
\begin{figure}[!hbt]
\begin{center}
\psfrag{f}{$\cF_1$}\psfrag{bt}{$\beta$}\psfrag{f2}{$\cF_2$}\psfrag{o}{$\cO$}\psfrag{b}{$\cB$}\psfrag{p}{$\cP$}
\includegraphics[width= 0.6 \linewidth]{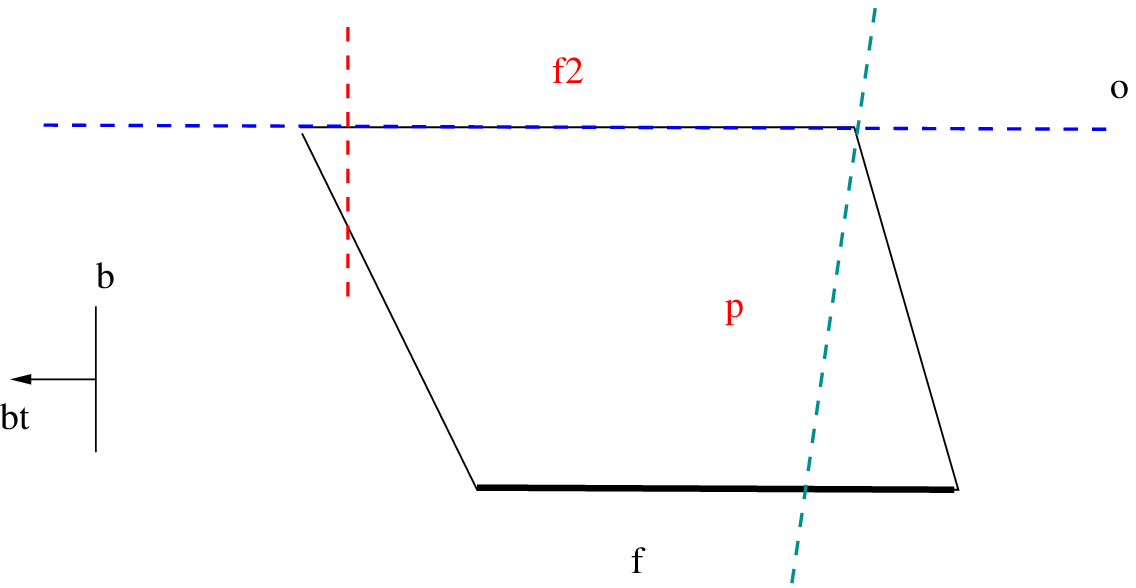}
\caption{ \label{Fig:ce3}}
\end{center}
\end{figure}
Suppose there are two target sets $\cF_1$ and $\cF_2$ where $\cF_2
\subset \cO$. It can be checked that $\cP \to \cF_1 \cup \cF_2$, but
neither $\cP \to \cF_1  $ or $\cP \to   \cF_2$ holds. If we were to apply 
Algorithm 1 to cut off the failure set for reaching $\cF_1$, we would obtain 
the region on the left-side of the (red) dotted line (parallel to $\cB$).
However, the approximate failure set to reach $\cF_1$ cannot reach $\cF_2$, no matter how small is 
$\epsilon$, without crossing into the region that can reach $\cF_1$. Thus, if one insists on
a true subdivision, the reachability problem would not be solvable using our feedback methods.
On the other hand, $\text{Reach}_\epsilon(\cP,\cF_1) $ and $
\text{Reach}_\epsilon(\cP,\cF_2)$ is a cover for $\cP$, where
$\text{Reach}_\epsilon(\cP,\cF_1) $  is the right-side of the red
line and $ \text{Reach}_\epsilon(\cP,\cF_2)$ is the left-side of
the green line.
\end{example}

To most efficiently overcome the issue in the above example, we first subdivide
$\cP$ along $\cO$ and then use a cover in each subpolytope according to two
possible target sets. 

{\bf Cover of $\cP$ with respect to $\cO$:} \label{partition1}
(Let $\epsilon > 0$ be sufficiently small.) \
\begin{enumerate}
\item[(a)] Divide $\cP$ along $\cO$ to obtain $\cP_1$ and $\cP_2$.
\item[(b)] If $\dim(\cP_i \cap \cF) = n-1$, compute $\cQ_{i1} :=
\reach_{\epsilon}(\cP_i,\cP_i \cap \cF)$, $i = 1,2$. Otherwise
$\cQ_{i1} = \emptyset$. \item[(c)] If $\dim(\cP_i \cap \cQ_{j1}) =
n-1$, compute $\cQ_{i2} := \reach_{\epsilon}(\cP_i,\cP_i \cap
\cQ_{j1})$, $i = 1,2$, $j \neq i$. Otherwise $\cQ_{i2} =
\emptyset$. \item[(d)] Define the cover $\PP:=\{ \cQ_{11},
\cQ_{12}, \cQ_{21}, \cQ_{22} \}$.
\end{enumerate}

\begin{theorem}\label{Thm:OInside}
Suppose $\ri{\mathcal{P}} \cap \mathcal{O} \neq \emptyset$. There
exists a piecewise affine state feedback that achieves
$\mathcal{P} \overset{\mathcal{P}}{\longrightarrow} \mathcal{F}$
if and only if $\mathcal{P} \overset{\mathcal{P}}{\longrightarrow}
\mathcal{F}$  using open-loop control.
\end{theorem}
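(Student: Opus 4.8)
The plan is to treat ($\Longrightarrow$) as immediate — a piecewise affine closed loop produces trajectories which are in particular generated by piecewise continuous controls — and to prove ($\Longleftarrow$) by running the ``Cover of $\cP$ with respect to $\cO$'' construction and gluing together the controllers furnished by Sections~\ref{sec:reachpoly}--\ref{sec:trianglewrtF}. So assume $\cP\overset{\cP}{\longrightarrow}\cF$ by open-loop control. First I would split $\cP$ along $\cO$ into $\cP_1$ and $\cP_2$. Since $\cO$ is exactly the zero set of the affine function $x\mapsto\beta^T(Ax+a)$, where $\beta$ is a unit normal of $\cB$, each $\cP_i$ satisfies $\ri{\cP_i}\cap\cO=\emptyset$; thus Assumption~\ref{assum1} holds inside each $\cP_i$ (using normal $\beta$ on one side and $-\beta$ on the other), and Theorems~\ref{Thm:NoIntersection}, \ref{Thm:ControlforPolytope} and \ref{theorem2} and Corollaries~\ref{Cor:NonIntersect} and \ref{Cor:PracticalCut} all apply verbatim inside $\cP_1$ and $\cP_2$.

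The structural engine is the monotonicity $\frac{d}{dt}\beta^T\phi_t^u(x)=\beta^T(A\phi_t^u(x)+a)$, which holds because $\beta\perp\text{Im}(B)$: hence $\beta^T\phi$ is non-increasing while the trajectory lies in $\cP_1$, non-decreasing while it lies in $\cP_2$, and stationary on $\cO$ (where the velocity lies in $\cB$). Using this together with Corollary~\ref{Cor:NonIntersect} I would identify $\reach(\cP_i,\cP_i\cap\cF)=\cP_i\setminus N_i$, where, under the hypothesis $\cP\overset{\cP}{\longrightarrow}\cF$, the set $N_i$ is precisely the set of points of $\cP_i$ that cannot reach $\cF$ without crossing $\cO$ into $\cP_j$. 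The crux is then to show that every such point can in fact reach $\cF$ by a trajectory that crosses $\cO$ \emph{only once}, after which $\cP_j$ reaches $\cF$ directly; granting this, $N_i\overset{\cP_i}{\longrightarrow}(\cP\cap\cO)\cap\reach(\cP_j,\cP_j\cap\cF)$, so by Corollary~\ref{Cor:PracticalCut} the $\epsilon$-approximations satisfy $\cQ_{i1}\cup\cQ_{i2}\supseteq\cP_i$ for $\epsilon$ small, and $\PP=\{\cQ_{11},\cQ_{12},\cQ_{21},\cQ_{22}\}$ is a genuine cover of $\cP$.

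Next, each of the four assigned reachability problems is solvable by open-loop control: $\cQ_{i1}\overset{\cQ_{i1}}{\longrightarrow}\cP_i\cap\cF$ and $\cQ_{i2}\overset{\cQ_{i2}}{\longrightarrow}\cP_i\cap\cQ_{j1}$ by the defining property of $\reach_\epsilon$ (Corollary~\ref{Cor:PracticalCut}(a)). Since these target sets are in general not facets of the corresponding polytopes, I would invoke Theorem~\ref{Thm:ControlforPolytope} when a target happens to be a facet and Theorem~\ref{theorem2} otherwise, obtaining a piecewise affine feedback solving each of the four subproblems. The global controller is then assembled exactly as in the ``Piecewise affine synthesis'' recipe of Section~\ref{sec:controlpoly}: the target-dependency graph $\cQ_{i2}\to\cQ_{j1}\to\cF$ together with $\cQ_{i1}\to\cF$ is acyclic, so one sets $u$ equal to the local feedback on each $\cQ_{ik}$ and, at a point lying in several members of $\PP$, uses the feedback with the shortest remaining path to $\cF$. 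A trajectory started in $\cQ_{i2}$ reaches $\cP_i\cap\cQ_{j1}\subseteq\cQ_{j1}$ in finite time (crossing $\cO$) and is then carried to $\cF$ by the $\cQ_{j1}$-feedback; a trajectory started in $\cQ_{i1}$ reaches $\cF$ directly. Hence every trajectory reaches $\cF$ in at most two stages, and $\cP\overset{\cP}{\longrightarrow}\cF$.

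The main obstacle is the coverage claim of the second paragraph, i.e.\ the single-crossing normal form: a trajectory realizing $x\overset{\cP}{\longrightarrow}\cF$ could a priori cross $\cO$ many times, whereas the cover accounts only for at most one crossing followed by a direct approach. One must argue that such a trajectory can always be short-cut to one crossing $\cO$ at most once, exploiting that $\beta^Tx$ reverses its monotone direction at each crossing (so a re-entry into a side occurs at a $\beta$-level no lower than the previous exit on that side) together with the maneuverability on $\cO$ afforded by controllability and Lemma~\ref{lem2}. One also has to dispatch the degenerate branches of the algorithm in which some $\dim(\cP_i\cap\cF)$ or $\dim(\cP_i\cap\cQ_{j1})$ is less than $n-1$, so that the corresponding $\cQ$ is empty; here the standing hypothesis of this section — that no full-dimensional set of states is forced to reach a lower-dimensional subset of $\cF$ — is exactly what prevents a gap in the cover.
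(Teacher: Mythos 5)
Your architecture is the same as the paper's: split $\cP$ along $\cO$ into $\cP_1,\cP_2$, form the cover $\{\cQ_{11},\cQ_{12},\cQ_{21},\cQ_{22}\}$ from the two possible targets on each side, solve the four sub-problems with the earlier synthesis theorems, and glue along the acyclic target-dependency graph. The forward direction and the gluing are fine. The genuine gap sits exactly where you locate your ``main obstacle,'' but it is not quite the obstacle you name. The step that fails as written is: ``$N_i\overset{\cP_i}{\longrightarrow}(\cP\cap\cO)\cap\reach(\cP_j,\cdot)$, so by Corollary~\ref{Cor:PracticalCut} the $\epsilon$-approximations satisfy $\cQ_{i1}\cup\cQ_{i2}\supseteq\cP_i$ for $\epsilon$ small.'' Corollary~\ref{Cor:PracticalCut}(c) is pointwise: for each interior point that can reach the target there is \emph{some} $\epsilon$ depending on that point. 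It does not produce a single $\epsilon>0$ for which the two closed $\epsilon$-approximations jointly exhaust $\cP_i$; a priori the two maximal reach sets could cover $\cP_i$ while, for every fixed $\epsilon>0$, their $\epsilon$-shrinkings leave an uncovered sliver (and boundary points are not addressed by (c) at all). The paper closes this hole with a dedicated two-target result, Lemma~\ref{Lem:Inclusion}: if $\cP'\overset{\cP'}{\longrightarrow}\cF_1\cup\cF_2$ on an $\cO$-free polytope, then for some $\epsilon>0$ the union $\reach_\epsilon(\cP',\cF_1)\cup\reach_\epsilon(\cP',\cF_2)$ is all of $\cP'$. Its proof analyzes the failure sets $\cA_1^{\pm},\cA_2^{\pm}$ of the two targets and shows $\cA_1^-\cap\cA_2^-=\emptyset$, $\cA_1^+\cap\cA_2^+=\emptyset$, and $\cA_i^-\cap\cA_j^+=\emptyset$; because each failure set is a slab against $\cB$, this disjointness survives the $\epsilon$-inflation and gives the uniform coverage. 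Your argument needs this lemma or an equivalent quantitative substitute; the pointwise corollary cannot deliver it.

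Two secondary points. First, your ``single-crossing normal form'' is the right intuition for why points of $\cP_i$ that fail direct reachability can reach the crossing target, and the paper asserts the corresponding claim with comparable brevity, so I would not count that sketch alone as fatal; but note the paper organizes the induction differently, splitting into the case where at least one $\cP_i$ solves its direct problem and the case where neither does, and in the latter it hands $\cP_2$ the target $\cP_2\cap\cQ_{11}$ (the already-computed $\epsilon$-approximation on the other side) rather than $\cO\cap\reach(\cP_1,\cdot)$; this ordering is what makes the at-most-one-crossing bookkeeping terminate. Second, your dispatch of the degenerate branches via the section's standing hypothesis, and your appeal to Theorem~\ref{theorem2} when a target is not a facet, are both consistent with the paper.
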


The main idea of the result is that when $\cP$ is partitioned along $\cO$, there are only two types of points in each sub-polytope:
those that reach $\cF$ while remaining in the sub-polytope, or those that cross over to the other polytope to then reach $\cF$.
The proof requires a technical lemma on reachability of two target sets, whose proof is in the Appendix.

\begin{lemma}\label{Lem:Inclusion}
Let $\mathcal{F}_1$ and $\mathcal{F}_2$ be two $(n-1)$-dimensional
polytopes on the boundary of $\mathcal{P}$ but not on a common
hyperplane and assume $\ri{\cP} \cap \cO = \emptyset$. If
$\mathcal{P} \overset{\mathcal{P}}{\longrightarrow} \mathcal{F}_1
\cup \mathcal{F}_2$,  then there exists $\epsilon > 0$
sufficiently small such that $  \reach_{\epsilon}(\cP,\cF_1) \cup
\reach_{\epsilon}(\cP,\cF_2) = \cP$.
\end{lemma}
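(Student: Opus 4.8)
The plan is to convert the statement into a disjointness claim about the sets that $\cP$ discards in passing to the $\epsilon$-approximations, and then to establish that claim separately for interior and boundary points. First, note that $x\overset{\cP}{\longrightarrow}\cF_1\cup\cF_2$ is witnessed by a single trajectory which, at the instant it reaches $\cF_1\cup\cF_2$, lies in one of $\cF_1,\cF_2$; hence $x\overset{\cP}{\longrightarrow}\cF_1$ or $x\overset{\cP}{\longrightarrow}\cF_2$, so $\reach(\cP,\cF_1)\cup\reach(\cP,\cF_2)=\cP$, and by Corollary~\ref{Cor:NonIntersect} the two failure sets satisfy $\cA_1\cap\cA_2=\emptyset$. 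Writing $\cG_i^\epsilon:=\cP\setminus\reach_\epsilon(\cP,\cF_i)=\cA_{i,\epsilon_-}\cup\cA_{i,\epsilon_+}$, a closed set, and using that the cuts of Algorithm~\ref{Alg:PracticalCut} can be arranged to be monotone in $\epsilon$, the family $\{\cG_1^\epsilon\cap\cG_2^\epsilon\}_{\epsilon>0}$ is a nested family of compacta in $\cP$; so by the finite intersection property it suffices to prove $\bigcap_{\epsilon>0}(\cG_1^\epsilon\cap\cG_2^\epsilon)=\emptyset$, and then $\reach_\epsilon(\cP,\cF_1)\cup\reach_\epsilon(\cP,\cF_2)=\cP$ for all sufficiently small $\epsilon$, which is the assertion.

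Let $x$ lie in every $\cG_1^\epsilon\cap\cG_2^\epsilon$. If $x\in\ri{\cP}$, then $x$ reaches $\cF_1$ or $\cF_2$, say $\cF_1$, and Corollary~\ref{Cor:PracticalCut}(c) produces an $\epsilon$ with $x\in\reach_\epsilon(\cP,\cF_1)$, contradicting that $x\in\cG_1^\epsilon$ for every $\epsilon$; so every such $x$ lies on $\partial\cP$. For the boundary I would invoke the explicit form of Algorithm~\ref{Alg:PracticalCut}: $\cA_{i,\epsilon_-}\subseteq\{y:\beta^Ty\le\beta^Tv_i^-+\epsilon\}$ and $\cA_{i,\epsilon_+}\subseteq\{y\in\cP:\beta^Ty\ge\max_{z\in\cP}\beta^Tz-\epsilon\}$, with $\cA_{i,\epsilon_+}=\emptyset$ unless $\cA_i^+=\cP^+$, together with the fact $\cF_i\cup\cB_i^-\subseteq\reach_\epsilon(\cP,\cF_i)$ for every $\epsilon$ (the cut removes only the failure set, and points of $\cB_i^-$ reach $\cF_i$ by Lemma~\ref{lem2}). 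Letting $\epsilon\downarrow0$ in $x\in\cG_i^\epsilon$ then forces, for each $i$, that $\beta^Tx\le\beta^Tv_i^-$ or $x\in\cP^+$.

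The boundary case analysis splits accordingly. If $x\in\cP^+$ for both indices: if some $\cF_i$ is contained in $\cP^+$ it equals $\cP^+$ by dimension, so $x\in\cF_i\subseteq\reach_\epsilon(\cP,\cF_i)$ contradicts $x\in\cG_i^\epsilon$; otherwise $\beta^Tv_i^-<\max_{z\in\cP}\beta^Tz$ for both $i$, so $x\notin\cA_{i,\epsilon_-}$ for small $\epsilon$, which forces $x\in\cA_{i,\epsilon_+}$ for all small $\epsilon$, hence $\cA_1^+=\cA_2^+=\cP^+$ and $x\in\cP^+\subseteq\cA_1\cap\cA_2=\emptyset$, impossible. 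Otherwise, reorder the indices so that $\beta^Tv_1^-\le\beta^Tv_2^-$ and $\beta^Tx\le\beta^Tv_1^-$ (the mixed alternative $x\in\cP^+$ for one index again forces that facet to equal $\cP^+$, a contradiction as above); if some $w\in\ri{\cP}$ had $\beta^Tw<\beta^Tv_1^-$, then $w\in\cA_1^-\cap\cA_2^-$, impossible, so $v_1^-$ attains $\min_{z\in\cP}\beta^Tz$ and $\beta^Tx=\beta^Tv_1^-$; then $x\notin\cA_{i,\epsilon_+}$ for small $\epsilon$, so $x\in\cG_i^\epsilon$ gives $x\notin\reach_\epsilon(\cP,\cF_i)$ and hence $x\notin\cF_i\cup\cB_i^-$, whence $x\in\cH_i^-\setminus(\cF_i\cup\cB_i^-)=\cA_i^-$ for both $i$, once more contradicting $\cA_1\cap\cA_2=\emptyset$.

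The step I expect to be the real obstacle is the boundary analysis, and within it the inclusion $\cF_i\cup\cB_i^-\subseteq\reach_\epsilon(\cP,\cF_i)$: since $\reach(\cP,\cF_i)$ need not be closed, one must verify that the thickened failure set $\cA_{i,\epsilon_-}$ never engulfs a point of $\partial\cP$ that genuinely reaches $\cF_i$ -- in particular a point of $\cB_i^-$ -- which is exactly where the precise placement of the cutting hyperplane $\cG$ (anchored along $\cF_i\cap\cB_{v_i^-}$ in the relative boundary of the facet containing $\cF_i$, then tilted into $\ri{\cP}$) has to be exploited, and where the hypothesis that $\cF_1,\cF_2$ lie on no common hyperplane is essential: were $\cF_1$ and $\cF_2$ both faces of a single facet of $\cP$, the two approximations would genuinely leave a neighborhood of $\cF_1\cap\cF_2$ uncovered. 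By contrast the interior points fall immediately to Corollary~\ref{Cor:PracticalCut}(c), and the $\cA^+$-side of the analysis is elementary.
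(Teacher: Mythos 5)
Your reduction to $\bigcap_{\epsilon>0}(\cG_1^\epsilon\cap\cG_2^\epsilon)=\emptyset$ (granting the monotone choice of cuts), the treatment of interior points via Corollary~\ref{Cor:PracticalCut}(c), and the opening observation that $x\overset{\cP}{\longrightarrow}\cF_1\cup\cF_2$ splits into $x\overset{\cP}{\longrightarrow}\cF_1$ or $x\overset{\cP}{\longrightarrow}\cF_2$, hence $\cA_1\cap\cA_2=\emptyset$, are all fine. The genuine gap is exactly the step you flag and then do not close: the inclusion $\cB_i^-\subseteq\reach_\epsilon(\cP,\cF_i)$. It does not follow from Lemma~\ref{lem2}, which only places $\cB_i^-$ in the exact set $\reach(\cP,\cF_i)$; the cut of Algorithm~\ref{Alg:PracticalCut} is an \emph{over}-approximation of $\cA_i^-$ anchored only along points of $\cF_i\cap\cB_{v_i^-}$, and nothing in the algorithm prevents $\cA_{i,\epsilon_-}$ from sweeping up points of $\cB_i^-\setminus\cF_i$, which sit at the same $\beta$-level as the anchors but possibly on the wrong side of $\text{aff}\{z_1,\dots,z_k\}$ within $\cB_{v_i^-}$. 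Without that inclusion, your final case cannot pass from $x\notin\reach_\epsilon(\cP,\cF_i)$ to $x\in\cA_i^-$, and the alternative $x\in\cB_1^-\setminus\cF_1$ is left with no argument that $x$ lands in $\reach_\epsilon(\cP,\cF_2)$. A secondary slip: ``if $\cF_i\subseteq\cP^+$ it equals $\cP^+$ by dimension'' is false (an $(n-1)$-dimensional subpolytope of a facet need not be the whole facet); that sub-case is instead rescued by noting that $\cP^+$ would then be an $(n-1)$-dimensional face parallel to $\cB$, hence not contained in $\cO$, so $\cA_1^+=\cA_2^+=\emptyset$.

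The paper's proof avoids your obstacle entirely by establishing statements strictly stronger than $\cA_1\cap\cA_2=\emptyset$: (i) $\cA_i^+\neq\emptyset\Rightarrow\cA_j^+=\emptyset$; (ii) $\cA_i^-\neq\emptyset\Rightarrow\cA_j^-=\emptyset$; (iii) $\cA_i^-\cap\cA_j^+=\emptyset$, where the argument for (iii) actually yields either $\cA_j^+=\emptyset$ or a positive gap $\beta^Tv_i^-<\max_{x\in\cP}\beta^Tx$ separating $\cA_{i,\epsilon_-}$ from $\cA_{j,\epsilon_+}$ for small $\epsilon$. With these, a point excluded from $\reach_\epsilon(\cP,\cF_1)$ lies in $\cA_{1,\epsilon_-}$ or $\cA_{1,\epsilon_+}$; in either case the corresponding component of the $\cF_2$-failure set is empty and the other component is disjoint from it, so the point lies in $\reach_\epsilon(\cP,\cF_2)$ directly --- no claim about what $\reach_\epsilon$ must contain is ever needed. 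To salvage your route you should either prove (ii), which makes the $\cB_i^-$ issue moot since at most one of $\cA_{1,\epsilon_-},\cA_{2,\epsilon_-}$ is then nonempty, or strengthen Algorithm~\ref{Alg:PracticalCut} so that the cutting hyperplane is anchored along $\cB^-$ as well.
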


\begin{proof}[Proof of Theorem~\ref{Thm:OInside}]
($\Longrightarrow$) Obvious. \ ($\Longleftarrow$) \ We use the
notation $\mathcal{P}_i \overset{\mathcal{P}_i}{\Longrightarrow}
\cP_i \cap \cF$ to mean open loop reachability with an
$(n-1)$-dimensional target. We consider two cases. For the first
case, suppose there exists one sub-polytope, say w.l.o.g. $\mathcal{P}_1$,
satisfying $\mathcal{P}_1
\overset{\mathcal{P}_1}{\Longrightarrow} \cP_1 \cap \cF$. If, in
addition, $\mathcal{P}_2 \overset{\mathcal{P}_2}{\Longrightarrow}
\cP_2 \cap \cF$, then we are done. Otherwise, find $\cQ_{21}$ by
the method above. Also compute $\cQ_{22} :=
\reach_{\epsilon}(\cP_2,\cP_2 \cap \cP_1)$. Now we know that if
$\dim(\cP_2 \cap \cF) < n-1$, then $\mathcal{P}
\overset{\mathcal{P}}{\longrightarrow} \mathcal{F}$
 implies $\cQ_{22} = \cP_2$ by our assumption, and we are done.
Instead, if $\dim(\cP_2 \cap \cF) = n-1$ then by
Lemma~\ref{Lem:Inclusion}, $\mathcal{Q}_{21} \cup \cQ_{22}
=\cP_{2}$.

For the second case, suppose no $\mathcal{P}_i \in
\{\mathcal{P}_1, \mathcal{P}_2\}$ satisfies $\mathcal{P}_i
\overset{\mathcal{P}_i}{\Longrightarrow} \cP_i \cap \cF$. Without
loss of generality, suppose $\dim(\cP_1 \cap \cF) = n-1$ and
$\cQ_{11} \neq \emptyset$. Find $\cQ_{21}$, as above. Note that
$\cQ_{21}$ may be empty. Because $\cP
\overset{\cP}{\longrightarrow} \cF$, there exists $\epsilon
> 0$ sufficiently small so that $\dim( \cP_2 \cap \cQ_{11}) =
n-1$ and the states in $\cP_2$ that cannot reach $\cP_2 \cap \cF$
must be able to reach $\cP_2 \cap \cQ_{11}$. Therefore, we have
$\cP_2 \overset{\cP_2}{\longrightarrow} (\cP_2 \cap \cF) \cup
(\cP_2 \cap \cQ_{11})$. Compute $\cQ_{22}$ by the method above.
Now we know that if $\dim(\cP_2 \cap \cF)<n-1$, then $\cP_2
\overset{\cP_2}{\longrightarrow} (\cP_2 \cap \cF) \cup (\cP_2 \cap
\cQ_{11})$ implies $\cQ_{22}=\cP_2$ by our assumption. Instead, if
$\dim(\cP_2 \cap \cF)= n-1$ then by Lemma~\ref{Lem:Inclusion},
$\mathcal{Q}_{21} \cup \cQ_{22} =\cP_{2}$. Repeating the argument
for $\cP_1$, the result is obtained.
\end{proof}

\section{Conclusion}

We have presented methods of triangulation, subdivision, and covers for reachability and control synthesis 
for affine hypersurface systems.  
Some unique features of this work are: (1) We begin with an analysis of open-loop reachability, and we do not impose
what class of controls should be used to implement the reachability specifications. Because of the structure
of hypersurface systems, we then derive that piecewise affine feedbacks are a sufficiently rich class
to solve such problems. (2) We place emphasis on techniques of triangulation and subdivision, guided by the 
the principle that these cannot be performed independently of control synthesis. In particular, we show how the flow conditions
of a system
provide critical information for triangulation, and this can be used to establish greedy dynamic programming algorithms
which are guaranteed to outperform dynamic programming algorithms based on random triangulations of the polytopic state space: 
our algorithm always finds a solution when one exists via open-loop control. (3) We introduce a technique of covers for
forming partitions of the state space. This useful technique overcomes many technical problems with taking subdivisions. 
Fortunately, it naturally leads to synthesis of piecewise affine feedbacks. 

We have concentrated on hypersurface systems because of their simple, well-understood reachable sets. 
To extend our ideas to general systems, a carefully weighed analysis of the tradeoff between the conservatism of 
reach set approximations and complexity of the resulting algorithms must be made. 
Our work points in the direction of keeping the algorithms as simple as possible, by using
the simplest possible partition methods which can guarantee successful termination of numerical procedures. 
Our future work will explore these challenging problems.


\section*{Appendix}  

\renewcommand{\thelemma}{A.\arabic{lemma}}

\subsection{Proof of Proposition~\ref{Thm:NS}}
($\Longrightarrow$)
Assume that there exists an
$\Omega$-invariant set, say $\mathcal{A}$, in $\Omega \setminus
\Omega_f$. For any  $x_0 \in \mathcal{A}$, let $u: t \mapsto u(t)$
be any piecewise continuous function. Then by
Definition~\ref{Def:InvSet} every trajectory in $\Omega$ on an interval is
also in $\mathcal{A}$ on the same time interval.  Furthermore, since  $\mathcal{A} \cap
\Omega_f =\emptyset$ by assumption, it means $x_0 \not \overset{\Omega}{
\longrightarrow} \Omega_f$.

($\Longleftarrow$) Assume it is not true that $\Omega
\overset{\Omega}{\longrightarrow} \Omega_f $. Then $\Omega$ can be
partitioned into two nonempty sets $\Omega'$ and $\Omega''$, where
$\Omega' \overset{\Omega}{\longrightarrow} \Omega_f$ and $\Omega'' \not
\overset{\Omega}{ \longrightarrow} \Omega_f$. It is easily seen that
$\Omega'' \not \overset{\Omega}{ \longrightarrow} \Omega'$ and
$\Omega'=\Omega\setminus \Omega''$. This also immediately implies
that $\Omega''$ is an $\Omega$-invariant set, since otherwise there would exist some
trajectory $\phi_t^u(x_0)$ with $x_0 \in \Omega''$ that reaches $\Omega s\setminus \Omega'' = \Omega'$.
Also $\Omega'' \subset \Omega \setminus \Omega_f$. This completes the proof.

\subsection{Lipschitz Continuity of Marginal Functions}
Let $\mathcal{X}$ and $\mathcal{Y}$ be two sets, $G$ be a set-valued map from
$\mathcal{Y}$ to $\mathcal{X}$ and $f$ be a real-valued function defined on
$\mathcal{X} \times \mathcal{Y}$. We consider the family of maximization
problems
\[
g(y):=\max_{x \in G(y)} f(x,y),
\]
which depend upon the parameter $y$. The function $g$ is called the {\em
marginal function}. A general discussion on continuity properties of marginal
functions can be found in \cite{Aubin84}. Here we focus on the case of linear
affine functions and single out a useful consequence of Lipschitz continuity.
Let
\[
f(x,y)=a^Tx+b \quad \text{and} \quad G(y)=\{x \in \mathcal{P}: c^Tx =y\},
\]
where $a \in \mathbb{R}^n$, $b\in \mathbb{R}$, $c \in \mathbb{R}^n$, and $\mathcal{P}$ is a full dimensional
polytope in $\mathbb{R}^n$. (In another form, $\mathcal{P}$ can be written as $\mathcal{P}=\{x \in \mathbb{R}^n:
Ax \preceq e \}$, where $A \in \mathbb{R}^{m \times n}$, $e \in \mathbb{R}^m$, and $\preceq$ means less or equal
componentwise.) The domain of the marginal function $g$ is given by $\mathcal{D}=\{y \in \mathbb{R}: G(y) \neq
\emptyset\}$.

\begin{lemma}\label{Lem:Lipschitz}
The marginal function $g(y)$ is locally Lipschitz on its domain $\mathcal{D}$.
\end{lemma}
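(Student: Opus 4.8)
The plan is to recognize $g$ as the optimal value of a linear program in which the parameter $y$ enters only the constraint data, and then to use linear programming duality to rewrite $g$ as the pointwise minimum of a \emph{finite} family of affine functions of the scalar $y$; such a function is automatically Lipschitz on its domain.

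First I would write $g(y) = b + v(y)$, where
\[
v(y) := \max\{ a^T x ~:~ Ax \preceq e,~ c^T x = y \}
\]
is the value of a linear program whose feasible set is nonempty precisely for $y \in \mathcal{D}$. Since $\mathcal{P}$ is a polytope, hence bounded, this program is feasible and bounded for every $y \in \mathcal{D}$, so $v(y)$ is finite and, by strong duality,
\[
v(y) = \min\{ e^T \lambda + y\,\mu ~:~ \lambda \geq 0,~ A^T \lambda + \mu c = a \}.
\]
The crucial observation is that the dual feasible polyhedron $\Lambda := \{(\lambda,\mu) : \lambda \geq 0,\ A^T\lambda + \mu c = a\}$ does not depend on $y$: the parameter has migrated into the (linear) objective.

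Next I would invoke standard linear programming theory: whenever the linear functional $(\lambda,\mu) \mapsto e^T\lambda + y\mu$ is bounded below on $\Lambda$ — which holds for every $y \in \mathcal{D}$ — its minimum is attained on a minimal face of $\Lambda$, and such faces arise from a finite collection of basic feasible solutions $(\lambda_1,\mu_1), \dots, (\lambda_N,\mu_N)$ determined by $A$ and $c$ alone. Hence for all $y \in \mathcal{D}$,
\[
g(y) = b + \min_{1 \le i \le N} \bigl( e^T\lambda_i + y\,\mu_i \bigr),
\]
a pointwise minimum of finitely many affine functions of $y$, the $i$-th having Lipschitz constant $|\mu_i|$. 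Since the minimum of finitely many $L_i$-Lipschitz functions is $(\max_i L_i)$-Lipschitz, $g$ is Lipschitz on all of $\mathcal{D}$ with constant $\max_i |\mu_i|$, and in particular locally Lipschitz.

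I expect the only delicate point to be the reduction above when $\Lambda$ fails to be pointed (has a nontrivial lineality space): there ``vertex'' must be replaced by ``minimal face on which the objective is constant'', but this does not affect the conclusion, since one still recovers a finite affine description of $v$ on $\mathcal{D}$. As an alternative that sidesteps duality altogether, one could instead apply Hoffman's error bound to the polyhedral system $\{Ax \preceq e,\ c^T x = y\}$: it produces a constant $H$, depending only on $A$ and $c$, such that any $x$ feasible for parameter $y$ can be perturbed to an $x'$ feasible for a nearby parameter $y'$ with $\|x - x'\| \le H\,|y - y'|$, whence $|g(y) - g(y')| \le \|a\|\,H\,|y - y'|$ directly.
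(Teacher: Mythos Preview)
Your proof is correct, and it takes a genuinely different route from the paper's argument. The paper proves the lemma by a direct geometric construction: for $y_1,y_2 \in \mathcal{D}$ it tracks the vertices of the slice $G(y_1)$ along the edges of $\mathcal{P}$ until they meet the parallel slice $G(y_2)$, bounds the displacement of each vertex by a constant (depending only on $A$ and $c$) times $|y_1-y_2|$ via explicit formulas for edge--hyperplane intersections, and then uses a convex-combination argument to transfer a maximizer for $y_1$ to a feasible point for $y_2$.

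Your duality argument is cleaner and more conceptual: it identifies $g$ as the pointwise minimum of finitely many affine functions of $y$, which immediately yields a \emph{global} Lipschitz bound (not merely local, as stated). In the present setting $c \neq 0$, so the dual polyhedron $\Lambda$ is in fact pointed and your caveat about non-pointed $\Lambda$ is unnecessary, though harmless. The paper's approach is more self-contained in that it avoids invoking LP duality or Hoffman's bound, and it makes the dependence of the Lipschitz constant on the edge geometry of $\mathcal{P}$ explicit; your approach instead expresses the constant through the dual vertices. Either way, both arguments deliver what the paper actually uses downstream, namely Lipschitz continuity of $y \mapsto \max\{\beta^T(Ax+a): x \in \mathcal{P},\ \beta^T x = y\}$ so that the comparison principle can be applied in the proof of Lemma~\ref{lem1}.
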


\proof  For any $y_1, y_2 \in \mathcal{D}$, it is clear that
$G(y_1)$ and $G(y_2)$ are lower-dimensional polytopes in
$\mathcal{P}$. Let $v_1, \dots, v_k$ be the vertices of $G(y_1)$.
For each $i=1, \dots, k$, let a point start moving from $v_i$
along the edges of $\mathcal{P}$. It first meets the hyperplane
$c^Tx=y_2$ at a point, denoted by $w_i$. Then, $w_i$ must be a
vertex of $G(y_2)$ (note that $w_i$ and $w_j$ may not be
distinct). The path that the point goes through from $v_i$ to
$w_i$ is composed of either a single edge or joint edges of
$\mathcal{P}$ (see Figure~\ref{Fig:LipschitzProof} for an
illustration in 2D).
\begin{figure}[!t]
\begin{center}
\psfrag{v1}{$v_1$}\psfrag{v2}{$v_2$}\psfrag{w1}{$w_1$}\psfrag{v}{$v$}
\psfrag{w2}{$w_2$}\psfrag{1}{$c^Tx=y_1$}\psfrag{2}{$c^Tx=y_2$}\psfrag{p}{
$\mathcal{P}$}
\includegraphics[width=0.3 \textwidth]{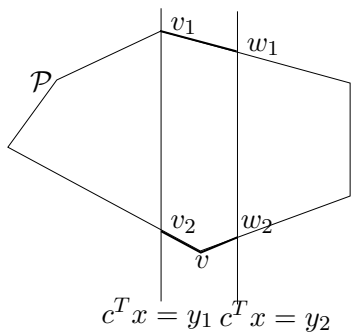}
\caption{An illustration of the proof. \label{Fig:LipschitzProof}}
\end{center}
\end{figure}
Firstly, if it is a single edge of $\mathcal{P}$, this edge can be algebraically
represented by $\{x \in \mathbb{R}^n:A_{i1}x =e_{i1} \text{ and } A_{i2} x\prec
e_{i2}\}$, where $A_{i1} \in \mathbb{R}^{(n-1) \times n}$ and $A_{i2} \in
\mathbb{R}^{(m-n+1) \times n}$ are formed by the columns of $A$ with suitable
order. Since $v_i$ is on the edge and also on the hyperplane $c^Tx=y_1$, it
follows that
\[
v_i =\left[ \begin{array}{c} A_{i1} \\ c^T \end{array} \right]^{-1} \left[
\begin{array}{c} e_{i1} \\ y_1 \end{array} \right].
\]
For the same reason, we have
\[
w_i =\left[ \begin{array}{c} A_{i1} \\ c^T \end{array} \right]^{-1} \left[
\begin{array}{c} e_{i1} \\ y_2 \end{array} \right].
\]
Hence, $\|v_i-w_i\| \leq L_i \|y_1-y_2\|$, where $L_i$ only
depends on $A$ and $c$.  Secondly, if it is composed of joint
edges, without loss of generality, say there are two connected
edges since it has the same argument for the case with more than
two edges. Two edges are connected at a point, say $v$, which lies
between the hyperplanes $c^Tx=y_1$ and $c^Tx=y_2$ (see
Figure~\ref{Fig:LipschitzProof} for an example). Let the parallel
hyperplane going through the point $v$ be $c^Tx=y'$. Thus, $y' \in
[y_1, y_2]$.  By the same argument above, it follows that
$\|v_i-v\| \leq L_i^1 \|y'-y_1\|$ and $\|v-w_i\| \leq L_i^2
\|y_2-y'\|$, where $L_i^1$ and $L_i^2$ depend on $A$ and $c$. Let
$L_i=\max(L_i^1, L_i^2)$. Thus, we have
\[
\|v_i-w_i\| \leq \|v_i-v\|+\|v-w_i\| \leq   L_i^1 \|y'-y_1\|+ L_i^2 \|y_2-y'\|
\leq L_i (\|y'-y_1\|+ \|y_2-y'\|)=L_i\|y_2-y_1\|.
\]

Next, we show that $g(y_1)-g(y_2) \leq L \|y_1-y_2\|$, where $L$ is a constant.
We know that for any $y_1 \in \mathcal{D}$, there exists a $x_1 \in G(y_1)$
satisfying $g(y_1)=f(x_1,y_1)$. On the other hand, the point $x_1$ can be
written as a convex combination of the vertices of $G(y_1)$, i.e.,
$x_1 = \sum_{i=1, \dots, k} \lambda_i v_i$, where $\lambda_i\geq 0$ and
$\sum_{i=1, \dots, k} \lambda_i=1$. Now consider the same convex combination of
points $w_i, i=1, \dots, k$, which is given by $x_2=\sum_{i=1,\dots,k} \lambda_i
w_i$. Notice that $w_i, i=1, \dots, k$ are vertices of $G(y_2)$ as we showed
before, so the point $x_2$ is in $G(y_2)$ and therefore $g(y_2) \geq
f(x_2,y_2)$. Then we deduce that
\[
\begin{array}{ll}
g(y_1)-g(y_2) & \leq f(x_1,y_1) -f(x_2,y_2) \leq \|a\| \|x_1-x_2\| \leq \|a\|
\|\sum _{i=1,\dots, k} \lambda_i (v_i-w_i)\| \\ &\leq \|a\| \sum _{i=1, \dots,
k} \lambda_i \|v_i-w_i\|   \leq \|a\| \sum_{i=1,\dots,k} \lambda_i L_i
\|y_1-y_2\| \leq \|a\| \max_i (L_i) \|y_1-y_2\|.
\end{array}
\]
Recall that $L_i$ depends only on $A$ and $c$. So there is an upper bound $\bar{L}$ only depending on $A$ and
$c$ such that $\bar{L} \geq L_i$ for any $i$. Thus, let $L= \|a\|\bar{L}$ and we obtain  $g(y_1)-g(y_2) \leq L
\|y_1-y_2\|$. Hence, it is locally Lipschitz.
\endproof

\subsection{Proof of Lemma~\ref{lem1}}
Let $G: \mathbb{R} \to 2^{\mathcal{P}}$ be the
set-valued map $G(y)=\{x \in \mathcal{P}: \beta^Tx =y\}$. Its
domain  is $\mathcal{D}=\{y \in \mathbb{R}:G(y) \neq \emptyset\}$.
We define the real-valued function
\[
g(y): =\max\{\beta^T(Ax+a): x \in G(y)\}, \quad y \in \mathcal{D}.
\]
By Lemma~\ref{Lem:Lipschitz}, the function $g(\cdot)$ is locally
Lipschitz. Let $\phi_t(y_0)$ be the solution of $\dot y =g(y)$
with  initial state $y_0$. Since $\beta^T(Ax+a) \leq 0$ for all $x
\in \mathcal{P}$  and $\mathcal{B}_z \cap \mathcal{P} = G(y^*)$,
for some $y^* \in \mathcal{D}$,  we have $g(y^*) \leq 0$. Thus, we
know $\phi_t(y_0) \leq y^*$ for all $t \geq 0$ if $y_0 \leq y^*$.

Now, consider any initial state $x_0 \in \mathcal{H}_z^- \cap \mathcal{P}$ and
any piecewise continuous function $u: t \mapsto u(t)$. Let $x(t), t \in [0,
\bar{T}]$ be the trajectory segment defined in $\mathcal{P}$ with initial
condition $x_0$ and control input $u(t)$. Introduce $\xi(t) =\beta^Tx(t)$, $t
\in [0, \bar{T}]$. Then we have
\[
\dot \xi(t)=\beta^T \dot x(t)=\beta^T(Ax(t)+a+Bu(t))=\beta^T(Ax(t)+a).
\]
Notice that $x(t) \in \mathcal{P}$ and $\beta^T x(t) =\xi(t)$. It
implies that $x(t) \in G(\xi(t))$ for  $t \in [0, \bar{T}]$.
Hence,  we know  $\beta^T(Ax(t)+a) \leq g(\xi(t)) $ (or
equivalently, $ \dot \xi(t) \leq g(\xi(t)) $) from our
construction of $g(\cdot)$. By the Comparison Principle (Theorem
1.4.1, \cite{LakLee69}) it follows that $ \xi(t) \leq
\phi_t(\xi(0))$  for $ t \in [0, \bar{T}]$. Also, $x_0 \in
\mathcal{H}_z^- \cap \mathcal{P}$ implies that $\xi(0) \leq y^*$,
so $\phi_t(\xi(0)) \leq y^*$. Consequently, we obtain $\xi(t) \leq
y^*$, which in turn implies $x(t) \in \mathcal{H}_z^- \cap
\mathcal{P}$ for all $t \in [0, \bar{T}]$, meaning that
$\mathcal{H}_z^- \cap \mathcal{P}$ is $\mathcal{P}$-invariant.
Following along the same lines,  $\ri{\mathcal{H}}_z^- \cap
\mathcal{B}$ is  $\mathcal{P}$-invariant.

\subsection{Proof of Lemma~\ref{lem2}}
For any $x \in l$, let $Ax+a+Bu=\lambda'(y-z)$ where
$\lambda'>0$ is any  constant. Note that $y \in \mathcal{B}_z$
implies $(y-z) \in \text{Im}(B)$, and by convexity we have
$\beta^T(Ax+a)=0$, which implies $(Ax+a) \in \text{Im}(B)$.
Therefore, the above linear equation has a unique solution $u_x$.
Then following along the same lines as for Lemma~\ref{Lem:Line}, it is obtained
that $z \overset{l}{\longrightarrow} y$.

\subsection{Proof of Lemma~\ref{Lem:FaceInvariance}}
If $\beta^T(Ax+a) = 0$ for all $x \in \mathcal{B}_z \cap
\mathcal{P}$, then from Lemma~\ref{lem1} it follows that $\mathcal{H}_{z}^-
\cap \mathcal{P}$ and $\ri{\mathcal{H}}_{z}^- \cap \mathcal{P}$
are $\mathcal{P}$-invariant. On the other hand, rewrite
$\beta^T(Ax+a) = 0$ as $(-\beta)^T(Ax+a) = 0$. Then
$(-\beta)^T(Ax+a) \leq 0$ for all $x \in \mathcal{B}_z \cap
\mathcal{P}$, so again from Lemma~\ref{lem1} we obtain $\mathcal{H}_{z}^+ \cap
\mathcal{P}$ and $\ri{\mathcal{H}}_{z}^+ \cap \mathcal{P}$  are
$\mathcal{P}$-invariant.  By Lemma~\ref{lem:elem},
$(\mathcal{H}_{z}^- \cap \mathcal{P}) \cap (\mathcal{H}_{z}^+ \cap
\mathcal{P})$ and $(\ri{\mathcal{H}}_{z}^- \cap \mathcal{P}) \cup
(\ri{\mathcal{H}}_{z}^+ \cap \mathcal{P})$ are
$\mathcal{P}$-invariant. The former set is exactly $\mathcal{B}_z
\cap \mathcal{P} $ and the latter set is $\mathcal{P} \setminus
\mathcal{B}_z$.

\subsection{Proof of Lemma~\ref{Lem:Line}}
Since $y \in \ri{\mathcal{H}}_z^-$, one obtains
$\beta^T(y-z) <0$. It  implies that the stacked matrix $[-B \ \
(y-z)]$ is of full rank.  Then there is a unique solution $u_x$
and $\lambda_x$ to the linear equation $Ax+a+Bu_x=\lambda_x(y-z)$
for a given point $x \in l$. Moreover, from the assumption $z, y
\not\in \mathcal{O}$, we obtain that $\beta^T(Az+a)<0$,
$\beta^T(Ay+a)<0$ and then by convexity we have $\beta^T(Ax+a)<0$,
$\forall x \in l$. So $\beta^T(Ax+a+Bu_x) <0$. This together with
$\beta^T(y-z) <0$ leads to $\lambda_x >0$, $\forall x \in l$.
Applying $u_x$, the resulting closed-loop system is $\dot x =
\lambda_x (y-z)$. Thus, the trajectory remains in $l$. Moreover,
in the compact set $l$, $\lambda_x$ is bounded away from zero. So
$\beta^T \lambda_x (y-z) <\delta$ for some $\delta<0$, which
implies that the trajectory starting from $z$ reaches $y$ in
finite time.

\subsection{Proof of Proposition~\ref{Prop:ControlforSimplex}\label{Sec:LemmaProof}}
We begin by checking the invariance condition. Let
$w_-$ ($w_+$) be a vertex in $\arg \min\{\beta^T v: v \in \text{vert}(\mathcal{F}_0)\}$ ($\arg \max \{\beta^T v:
v \in \text{vert}(\mathcal{F}_0)\}$, respectively).

First, we consider the case that $v_i \notin \mathcal{O}$. For this, we discuss two situations depending on $\beta^T v_i > \beta^T w_-$ or $\beta^T v_i = \beta^T w_-$. (Note that it is impossible to have $\beta^T v_i < \beta^T w_-$  by Theorem~\ref{Thm:NoIntersection}(a) since $\mathcal{S} \overset{\mathcal{S}}{\longrightarrow} \mathcal{F}_0$.)

(i) If $\beta^T v_i > \beta^T w_-$, then there is a point $p \in \ri{\mathcal{S}}$ such that $\beta^T v_i > \beta^T p$ (or equivalently $\beta^T (p-v_i) <0$). Let
\begin{equation}\label{eq:uNotinO}
Av_i+a+Bu_i = \lambda (p-v_i),
\end{equation}
where $\lambda$ is a scalar to be determined. Writing in a compact form, we have
\[
\left[\begin{array}{cc} -B & (p-v_i)\end{array} \right]\left[\begin{array}{c} u_i \\ \lambda \end{array} \right] =Av_i+a.
\]
Note that $(p-v_i) \notin \mathcal{B}$, so the matrix $[-B \ (p-v_i)] $ is of full rank and therefore the above equation has a unique solution $u_i$ and $\lambda$. Also, notice that $\beta^T (Av_i+a+Bu_i)=\beta^T(Av_i+a) <0$ and that $\beta^T (p-v_i) <0$. Thus, we have $\lambda>0$ from (\ref{eq:uNotinO}). From the definition of simplices, it follows that $h_j \cdot v_i =c_j$ and $h_j \cdot p < c_j$ for any $j\neq i$, where $c_j$ is a constant. This leads to $h_j \cdot (p-v_i) <0$, which further implies
that there exists a $u_i$ attained from (\ref{eq:uNotinO}) satisfying
\begin{equation}\label{eq:RestrictNotinO}
h_j \cdot (Av_i+a+Bu_i) =\lambda h_j \cdot (p-v_i) <0 \text{ for any } j\neq i.
\end{equation}

(ii) If $\beta^T v_i =\beta^T w_-$, then $v_i \neq v_0$ since otherwise it contradicts to $(\mathcal{H}_{w_-}^- \cap \mathcal{S}) \setminus (\mathcal{F}_0 \cup \mathcal{O}) =\emptyset$ inferred from $\mathcal{S} \overset{\mathcal{S}}{\longrightarrow} \mathcal{F}_0$ by Theorem~\ref{Thm:NoIntersection}. Then for this, we claim that $\beta^T v_0 > \beta^T w_-$. (To see this, assume in contrast that $\beta^T v_0 =\beta^T w_-$. Since $v_i \notin \mathcal{O}$, there is a point $y$ on the line segment joining   $v_0$ and $v_i$ and also in a small neighborhood of $v_i$, satisfying $y \notin \mathcal{O}$ and $y \in (\mathcal{H}_{w_-}^- \cap \mathcal{S}) \setminus \mathcal{F}_0$. It contradicts to  $(\mathcal{H}_{w_-}^- \cap \mathcal{S}) \setminus (\mathcal{F}_0 \cup \mathcal{O}) =\emptyset$ again.) Consequently, there is a point $p \in \ri{\mathcal{S}}$ such that $\beta^T v_0 > \beta^T p$. Let
\begin{equation}\label{eq:u-v0-NotinO}
Av_i+a+Bu_i = \lambda (p-v_0),
\end{equation}
where $\lambda$ is a scalar to be determined.  Following along the same lines as above, there exists a $u_i$ attained from (\ref{eq:u-v0-NotinO}) satisfying
\begin{equation}\label{eq:Restrict-NotinO}
h_j \cdot (Av_i+a+Bu_i) =\lambda h_j \cdot (p-v_0) <0 \text{ for any } j\neq 0.
\end{equation}

Second, we consider the case that $v_i \in \mathcal{O}$. For this we discuss two situations depending on $v_0$
(namely, $ \beta^Tw_+ \geq \beta^T v_o \geq \beta^Tw_-$ and $\beta^Tv_0>  \beta^Tw_+$).

(i) If $\beta^Tw_+ \geq \beta^T v_o \geq \beta^Tw_-$, then there is a $p' \in \mathcal{S} \setminus \{v_0\}$ such that $\beta^T v_0 =\beta^Tp'$. Let
\begin{equation}\label{eq:uinO}
Av_i+a+Bu_i =\lambda'(p'-v_0),
\end{equation}
where $\lambda'>0$ is an arbitrary constant. Note that $(p'-v_0) \in \mathcal{B}$ by this choice and that $Av_i+a \in \mathcal{B}$ (due to $v_i \in \mathcal{O}$), so there is a $u_i$ satisfying the equation above. On the other hand, from the definition of simplices, it follows that $h_j \cdot (p'-v_0) \leq 0$ for any $j \neq 0$. Thus, there exists a $u_i$ attained from (\ref{eq:uinO}) satisfying
\begin{equation}\label{eq:RestrictinO}
h_j \cdot (Av_i+a+Bu_i) =\lambda'h_j\cdot (p'-v_0) \leq 0 \text{ for any } j \neq 0.
\end{equation}
In addition, for this $p'$, there has to be a facet $\mathcal{F}_k$ not containing $p'$, where $k \in \{1, \dots, n\}$. Thus, we have $h_k \cdot (p'-v_0) <0 $ and therefore
\begin{equation}\label{eq:FlowinO}
h_k \cdot (Av_i+a+Bu_i) =\lambda'h_k\cdot (p'-v_0) < 0 \text{ for every } v_i \in \mathcal{O}.
\end{equation}

(ii) If $\beta^Tv_0> \beta^Tw_+$, then we claim that $\beta$ together any $n-1$ vectors from $h_1, \dots, h_n$
are linearly independent. (To this end, assume it is not true.
Without loss of generality, we suppose that $\beta$ and $h_1, \dots,
h_{n-1}$ are linearly dependent. Then $\beta$ can be written as
$\beta =\lambda_1 h_1+\cdots+\lambda_{n-1} h_{n-1}$. Thus,
\[
\beta \cdot (v_n-v_0)=\lambda_1 h_1 \cdot  (v_n-v_0)
+\cdots + \lambda_{n-1} h_{n-1} \cdot  (v_n-v_0).
\]
Note that $h_j \cdot
(v_n-v_0) =0$ for any $j=1, \dots, n-1$, so $\beta
\cdot  (v_n-v_0)=0$ and $\beta^T v_n=\beta^Tv_0$, a
contradiction.)
Since $\mathcal{S} \overset{\mathcal{S}}{\longrightarrow } \mathcal{F}_0$ and $\partial \mathcal{S}_{max} =\{v_0\}$ in this case, from Theorem~\ref{Thm:NoIntersection} we have $v_0 \notin \mathcal{O}$.
Let $\delta <0$ be a scalar. Since $\beta$ and $h_1, \dots, h_{i-1},h_{i+1}, \dots, h_n$ are linear independent, there is a unique solution
to the following linear equation
\[\beta \cdot y =0, \
h_j \cdot y =\delta, \quad j=1, \dots,i-1, i+1, \dots, n
\]
Moreover, note that $\beta \cdot y =0$ implies $y \in \mathcal{B}$ and that $v_i \in \mathcal{O}$ implies $Av_i+a \in \mathcal{B}$. So there exists a $u_i $ satisfying $Av_i+a+Bu_i =y$, which further implies that
\begin{equation}\label{eq:Restrict-inO}
h_j \cdot (Av_i+a+Bu_i) =h_j \cdot y =\delta<0 \text{ for any }  j \neq i.
\end{equation}

Thus, it is proved that the invariance condition holds at every vertex,
from (\ref{eq:RestrictNotinO}), (\ref{eq:Restrict-NotinO}), (\ref{eq:RestrictinO}), and (\ref{eq:Restrict-inO}).

Note that once the control inputs $u_0, \dots, u_n$ at corresponding vertices $v_0, \dots, v_n$ are found, an affine control
$u=Fx+g$ can be uniquely constructed by solving the equation
\begin{equation}\label{eq:LAController}
\left[\begin{array}{ccc} u_0 & \cdots & u_n \end{array} \right] =\left[\begin{array}{cc} F & g \end{array} \right] \left[\begin{array}{ccc} v_0 & \cdots & v_n \\ 1 & \cdots & 1 \end{array} \right].
\end{equation}
Now we examine three cases to synthesize the feedback.

First, consider the case when $\beta^T w_+ \geq \beta^T v_o \geq \beta^Tw_-$. Select the control inputs $u_0,
\dots, u_n$ satisfying the invariance condition and construct the affine control $u(x) = F x + g$
from~(\ref{eq:LAController}). With this choice of control, we have shown that (\ref{eq:FlowinO}) also holds for
every vertex $v_i \in \mathcal{O}$. Since $\mathcal{O} \cap \ri{\mathcal{S}}=\emptyset$, one obtains that
$\mathcal{O} \cap \mathcal{S}$ is the convex hull of these vertices in $\mathcal{O}$. By convexity, it follows
from (\ref{eq:FlowinO}) that $h_k \cdot (Ax+a+Bu(x)) <0$ for any $x$ in $\mathcal{O} \cap \mathcal{S}$. Recall
that the possible equilibria of the closed-loop system lie in $\mathcal{O}$. So it implies that no equilibrium
of the closed-loop system is in $\mathcal{S}$. Therefore, by Theorem~\ref{Thm:LinearSimplex}, the affine
control $u(x)=Fx+g$ solves Problem~\ref{Prbm:CTF} and therefore achieves $\mathcal{S}
\overset{\mathcal{S}}{\longrightarrow} \mathcal{F}_0$.

Second, consider the case when $\beta^Tv_0>\beta^Tw_+$ and $\mathcal{F}_0 \not \subset \mathcal{O}$. Select the
control inputs $u_0, \dots, u_n$ satisfying the invariance condition and construct the affine control
$u(x) = F x + g$ from~(\ref{eq:LAController}). We know $v_0 \notin \mathcal{O}$ and there is a vertex $v_k \in
\mathcal{F}_0$ not in $\mathcal{O}$. From (\ref{eq:Restrict-inO}), we have $h_k \cdot (Av_i+a+Bu(v_i)) <0$ for
every $v_i \in \mathcal{O}$ since $k \neq i$. Following along the same lines as above, by
Theorem~\ref{Thm:LinearSimplex}, an affine control $u(x)=Fx+g$ solves Problem~\ref{Prbm:CTF} and therefore
achieves $\mathcal{S} \overset{\mathcal{S}}{\longrightarrow} \mathcal{F}_0$.

Finally, consider the case when $\beta^Tv_0> \beta^Tw_+$ and
$\mathcal{F}_0 \subset \mathcal{O}$. By
Theorem~\ref{Thm:NoIntersection} $\mathcal{B}$ is not parallel to
$\mathcal{O}$, which implies $\beta^T w_+ > \beta^T w_-$. So we
can pick a point $v'$ on the line segment joining $w_-$ and $v_0$
satisfying $\beta^T w_+  > \beta^T v'> \beta^T w_-$. The simplex
$\mathcal{S}$ is then partitioned into two simplices,
$\mathcal{S}_1$ and  $\mathcal{S}_2$, along the hyperplane
containing $v'$ and the vertices in $\text{vert}(\mathcal{F}_0)
\setminus \{w_-\}$. See Figure~\ref{Fig:PWA} for an example.
\begin{figure}[!t]
\begin{center}
\psfrag{1}[][][\scale]{$v_0$} \psfrag{2}[][][\scale]{$w_-$}
\psfrag{1p}[][][\scale]{$v'$}
\psfrag{s1}{$\mathcal{S}_1$}\psfrag{s2}{$\mathcal{S}_2$}
\includegraphics[width= 0.4 \linewidth]{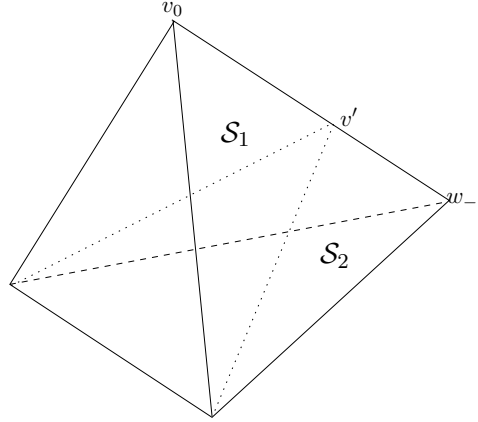}
\caption{Partition of the simplex $\mathcal{S}$. \label{Fig:PWA}}
\end{center}
\end{figure}
Note that in this case $\mathcal{O}$ is the hyperplane containing
$\mathcal{F}_0$, so $v'$ and $v_0$ are not in
$\mathcal{O}$. Let
$\mathcal{F}_0'$ be the common facet of $\mathcal{S}_1$ and $\mathcal{S}_2$.
For $\mathcal{S}_1$, we know $\beta^Tv_0> \max \{\beta^Tv_i: v_i \in \mathcal{F}_0'\}$ and
$\mathcal{F}_0' $ is not in $ \mathcal{O}$.
Hence, from the second case above, there exists an affine feedback $u=F_1x+g_1$ that achieves $\mathcal{S}_1 \overset{\mathcal{S}_1}{\longrightarrow} \mathcal{F}_0'$. For $\mathcal{S}_2$, we have
$\beta^Tw_+ > \beta^Tv' > \beta^Tw_-$. So from the first case above,
there exists an affine feedback $u=F_2x+g_2$ that achieves $\mathcal{S}_2 \overset{\mathcal{S}_2}{\longrightarrow} \mathcal{F}_0$. In total, the feedback
\[
u = \left\{\begin{array}{ll} F_1x+g_1 & \text{ if } x \in \mathcal{S}_1\setminus \mathcal{S}_2, \\
F_2x+g_2 & \text{ if } x \in \mathcal{S}_2
\end{array} \right.
\]
achieves $\mathcal{S} \overset{\mathcal{S}}{\longrightarrow} \mathcal{F}_0$.
\endproof

\subsection{Proof of Lemma~\ref{Lem:Inclusion}}

For $i=1, 2$, let $\mathcal{A}^-_i$ and $\mathcal{A}^+_i$ be the
possible failure sets to reach $\mathcal{F}_i$ defined in
(\ref{eq:FailureSets1}) and (\ref{eq:FailureSets2}), respectively.

We first claim that if $\mathcal{A}^+_i \neq \emptyset$ then
$\cA^+_j = \emptyset$ for $j \neq i$. To see this, suppose that
$\cA^+_j \neq \emptyset$. Then it follows
from~(\ref{eq:FailureSets2}) that $\cA^+_j  =\cP^+$. Also for same
reason, $\mathcal{A}^+_i \neq \emptyset$ implies $\cA^+_i =\cP^+$.
It means $\cP^+$ is a failure set to reach $\cF_1 \cup \cF_2$, a
contradiction to $\mathcal{P}
\overset{\mathcal{P}}{\longrightarrow} \mathcal{F}_1 \cup \cF_2$.

Second, we claim that if $\mathcal{A}^-_i \neq \emptyset$ then
$\cA^-_j = \emptyset$ for $j \neq i$. Suppose instead that both
sets are not empty. Then from~(\ref{eq:FailureSets1}), there is a
point $p \in \arg \min \{\beta^Tx : x \in \cP\}$ that belongs to
both $\cA^-_i$ and $\cA^-_j$. So this point cannot reach $\cF_1
\cup \cF_2$, a contradiction, too.

Third, we claim that $\mathcal{A}^-_i \cap \mathcal{A}^+_j
=\emptyset$ for $i \neq j$. Note that $\mathcal{A}^+_j \subseteq
\cP^+$, so if there is a $p \in \arg \min\{\beta^T x: x \in \cF_i
\} $ such that $p \not \in \cP^+$, then it is clear
from~(\ref{eq:FailureSets1}) that $\mathcal{A}^-_i \cap
\mathcal{A}^+_j =\emptyset$. Instead if for all $p \in \arg
\min\{\beta^T x: x \in \cF_i \}$,  $p  \in \cP^+$, then we know
$\cF_i \subset \cP^+$. So $\cP^+$ is of $(n-1)$-dimension that is
clearly parallel to $\cB$. Notice that $\cO $ is not parallel to
$\cB$ from the controllability assumption. Hence, $\cP^+
\not \subset  \cO \cap \ri{\cH}^+$, which implies $\cA_j^+
=\emptyset$ from~(\ref{eq:FailureSets2}). So the conclusion
follows.

Now we come to prove that $  \reach_{\epsilon}(\cP,\cF_1) \cup
\reach_{\epsilon}(\cP,\cF_2) = \cP$. Let
$\mathcal{A}_{\epsilon_-}^i$ and $\mathcal{A}_{\epsilon_+}^i$
($i=1, 2$) be the over-approximations of $\mathcal{A}^-_i$ and
$\mathcal{A}^+_i$ obtained by applying
Algorithm~\ref{Alg:PracticalCut}. Consider a point $x \in
\mathcal{P} \setminus \reach_{\epsilon}(\cP,\cF_1)$ if it exists.
Then $x$ is either in $\mathcal{A}_{\epsilon_-}^1$ or in
$\mathcal{A}_{\epsilon_+}^1$. Consider the first case when $x \in
\mathcal{A}_{\epsilon_-}^1$. That means, $\mathcal{A}^-_1 \neq
\emptyset$. Thus by our first claim, we get $\mathcal{A}^-_2
=\emptyset$. Moreover, by our third claim that $\mathcal{A}^-_1
\cap \mathcal{A}^+_2 =\emptyset$, we know for sufficiently small
$\epsilon$, $\mathcal{A}_{\epsilon_-}^1 \cap
\mathcal{A}_{\epsilon_+}^2 =\emptyset$. Thus $x \not \in
\mathcal{A}_{\epsilon_+}^2$ and it must be in
$\reach_{\epsilon}(\cP,\cF_2)$. Consider now the second case when
$x \in \mathcal{A}_{\epsilon_+}^1$. That means, $\mathcal{A}^+_1
\neq \emptyset$. Then by our second claim, we obtain that
$\mathcal{A}^+_2 = \emptyset$. Moreover, since $\mathcal{A}^+_1
\cap \mathcal{A}^-_2 =\emptyset$, then by the same argument as
above, we know the point $x$ has to be in
$\reach_{\epsilon}(\cP,\cF_2)$.


\bibliographystyle{IEEEtranS}

\end{document}